\newtheorem{theorem}{Theorem}[section]
\newtheorem{lemma}[theorem]{Lemma}
\newtheorem{prop}[theorem]{Proposition}
\newtheorem{cor}[theorem]{Corollary}
\newtheorem{remark}[theorem]{Remark}
\newtheorem{defi}[theorem]{Definition}
\newtheorem{problem}[theorem]{Problem}
\numberwithin{equation}{section}
\newcommand{\R}{\mathbb{R}}
\newcommand{\Z}{\mathbb{Z}}
\newcommand{\N}{\mathbb{N}}
\renewcommand{\tilde}{\widetilde}
\newcommand{\om}{\omega}
\newcommand{\Om}{\Omega}
\newcommand{\PP}{\mathbb{P}}
\newcommand{\plo}{\mathbb{P}_{LO}}
\newcommand{\E}{\mathbb{E}}
\newcommand{\cookieenv}{\omega}
\newcommand{\QuenchedCookies}{\mathbb{P}_\om}
\newcommand{\QuenchedWalks}{\mathbb{P}_{\om,0}}
\newcommand{\QuenchedTrees}{\mathbb{P}_\om^1}
\newcommand{\AnnealedCookies}{\mathbb{P}}
\newcommand{\AnnealedWalks}{\mathbb{P}_0}
\renewcommand{\epsilon}{\varepsilon}
\newcounter{constante}
\newcommand{\con}[1]{
\immediate\write 1{\noexpand\newlabel{#1}{{\theconstante}{\theconstante}}}
                  c_{\theconstante}
                    \stepcounter{constante}
                   }
\newcommand{\WLOG}{without loss of generality}
\author{Gideon Amir$^\sharp$ \and  Tal Orenshtein$^\dag$}
\thanks{$\sharp$ Bar Ilan University {\tt gidi.amir@gmail.com } \\
$\dag$ Weizmann Institute of Science and Technische Universit\"at M\"unchen {\tt  tal.orenshtein@weizmann.ac.il}}
\thanks{\textit{2000
Mathematics Subject Classification.} 60K35, 60K37,
  60J80}
\thanks{\textit{Key words:}\quad
excited random
walk, cookie walk, recurrence, transience, zero-one laws, law of large
numbers, limit theorems, random environment, regeneration structure.}
\title[Excited Mob] {Excited Mob}
\begin{document}

\setcounter{page}{1}

\maketitle

\begin{abstract}
  We show that for an i.i.d.\ bounded and elliptic cookie environment, a one dimensional excited random walk on the $k$-time leftover environment is transient to the right if and only if $\delta>k+1$ and has positive speed if and only if $\delta>k+2$, where $\delta$ is the expected drift per site. A slightly different definition of leftover environments then gives, to the best of our knowledge, the first example of a stationary ergodic environment with positive speed that does not follow by trivial comparison to an i.i.d.\ environment.
  In another formulation, we show that on such environments an excited mob of $k$ walkers is transient to the right if and only if $\delta>k$ and moves with positive speed if and only if $\delta>k+1$. We also prove a 0-1 law for directional transience and law of large numbers for leftover environments of stationary ergodic and elliptic cookie environments.
\end{abstract}

\pagestyle{myheadings}
\markboth{Excited Mob}{G. Amir and T. Orenshtein}
\section{Introduction}
\subsection{Basic model and notations}
 Excited random walk on $\Z^d$, $d\ge 1$, was introduced by Itai~Benjamini and David~B.~Wilson in 2003 \cite{benjamini2003excited}. The model in dimension $d=1$ was generalized by Martin~P.~W.~Zerner \cite{zerner2005multi}. It was studied extensively in recent years by numerous authors, and an almost up to date account may be found in the recent survey of Kosygina and Zerner \cite{kosygina2012excited}.

The model is defined as follows.
Let $\Omega=[0,1]^{\Z\times\N}$ and endow this space with the Borel $\sigma$-algebra generated by the Tychonoff product topology, where the space $[0,1]$ has the standard real topology. We call $\Om$ the space of cookie environments, where each $\om\in\Om$ is a cookie environment. $\om(x,n)\in[0,1]$ is called the $n$-th cookie in the location $x$.

Given a cookie environment $\om$ and an initial position $x\in\Z$ the excited random walk $X=(X_n)_{n\ge0}$ driven by $\om$ is given by:
\begin{eqnarray*}
\PP_{\om,x}(X_0=x)=1, \\
\PP_{\om,x}(X_n=X_{n-1}+1\ |\ X_0,X_1,\ldots,X_{n-1}) &=& \om(X_{n-1},\#\{k\leq n-1: X_k=X_{n-1}\}), \\
\PP_{\om,x}(X_n=X_{n-1}-1\ |\ X_0,X_1,\ldots,X_{n-1}) &=& 1 - \PP_\om(X_n=X_{n-1}+1\ |\ X_0,X_1,\ldots,X_{n-1}).
\end{eqnarray*}

The probability measure $\PP_{\om,x}$ is called the \emph{quenched} measure on the excited random walks started from $x$. Given a probability measure $P$ on the space $\Om$ of cookie environments, with a corresponding expectation operator $E$, we define the \emph{annealed} (also called \emph{averaged}) measure $\PP_x$ to be the $P$-average of the quenched measure:
\[
\AnnealedCookies_x[\cdot]=E[\PP_{\om,x}(\cdot)].
\]

The following two assumptions on the measure $P$ on cookie environments are standard.
We adopt the notations of \cite{kosygina2012excited}.

\begin{equation*}
\mbox{The family $(\omega(x,\cdot))_{x\in\Z}$ of cookie stacks is i.i.d.\ under
$P$}
\tag{IID}
\end{equation*}
and
\begin{equation*}
\mbox{\begin{tabular}{l} The family $(\omega(x,\cdot))_{x\in\Z}$ is stationary
 and ergodic\footnotetext{i.e.\ every Borel measurable $A\subset\Om$ which is invariant under left or right shifts on $\Z$ satisfies $P[A]\in\{0,1\}$.}
  under $P$
  \\ with respect to the shift on $\Z$.
\end{tabular}}\tag{SE}
\end{equation*}

Define also the following properties of a cookie environment $\om$: ellipticity, non-degeneracy, and positivity.

\begin{equation*}
  \mbox{\begin{tabular}{l} $\om(x,n)\in(0,1)$ for all $x\in\Z$ and $n\in\N$.\end{tabular}}\tag{ELL}
\end{equation*}
\begin{equation*}
  \mbox{\begin{tabular}{l}
  $\sum_{i=1}^\infty\om(x,i)=\infty$ and $\sum_{i=1}^\infty(1-\om(x,i))=\infty$ for all $x\in Z$.\end{tabular}}\tag{ND}
\end{equation*}
\begin{equation*}
  \mbox{\begin{tabular}{l} $\om(x,n)\ge\frac{1}{2}$ for all $x\in\Z$ and $n\in\N$.\end{tabular}}\tag{POS}
\end{equation*}

Say that a probability measure $P$ on $\Om$ satisfies (ELL), (ND) or (POS), respectively if $P$-a.s.\ $\om$ satisfies it.
The Non-degeneracy condition (ND) implies that for almost every environment $\om$, the walk is either transient or a.s.\ visits all vertices infinitely often.
(Without it other behaviors, such as being stuck in a finite interval, are possible).

Last, we define two additional properties on $P$: boundedness and weak ellipticity.
\begin{equation*}
  \mbox{\begin{tabular}{l} There is some deterministic $M$ such that $P$-a.s.\ \\
  $\om(x,n)=\frac{1}{2}$ for all $x\in\Z$ and $n>M$.\end{tabular}}\tag{BD}
\end{equation*}
\begin{equation*}
  \mbox{\begin{tabular}{l} For all $x\in \Z$ $P( \om(x,n)>0 \ \forall n\in\N)>0\ $ and $P(\om(x,n)<1 \ \forall n\in\N)>0\ $. \end{tabular}}\tag{WEL}
\end{equation*}

\subsection{Motivation and main results}
Given a random walk on $\Z$ a fundamental question one is interested in is whether the walk is transient or recurrent, and, in case it is transient, does the walk obey a law of large numbers and does it escape to $\pm\infty$ with non-zero speed. In the case of random cookie environments, the best one can hope for is exact criterions for these properties in terms of the measure over environments, and when such criteria are lacking, to prove these properties hold with probability either zero or one.
We first address the question of recurrence vs. transience. Let $P$ be a probability measure over the space of cookie environments. Zerner \cite{zerner2005multi} gave an exact criterion in the case that $P$ is (SE) and (POS) in terms of the \emph{expected drift per site}
$$\delta(x,P)= E \left [ \sum_{i=1}^\infty(2\om(x,i)-1) \right ].$$
Whenever $P$ is stationary then $\delta(x,P)=\delta(P)$ is independent of $x\in\Z$. We will sometimes use $\delta=\delta(P)$ when there is no confusion, and note that $\delta$ is well-defined (possibly infinite) under (POS) or (BD). Zerner showed that under (SE) and (POS) the walk is transient to the right if and only if either $\delta>1$ or $P(\om(0,1)=1)=1$. It was later shown by Kosygina and Zerner \cite{kosygina2008positively} that under (BD),~(IID) and~(WEL) the same threshold holds.
For the case that the measure satisfies (SE) no such threshold is known, however under additional condition, (ELL), Kosygina and Zerner \cite{kosygina2012excited} showed that a 0-1 law for transience still holds, and in \cite{amir2013zero} it was shown that also a 0-1 law for directional transience holds in these settings.

When coming to the question of having positive speed, much less in known. The techniques in the literature deal with measures that are (IID), (BD) and (WEL). It was shown by Basdevant and Singh \cite{basdevant2008speed} and Kosygina and Zerner \cite{kosygina2008positively} that once again an exact criterion can be formulated in terms of $\delta$. The walk has positive speed if and only if $\delta > 2$. The (IID) structure of the environment is used extensively in these methods, and the (BD) condition is used to allow the usage of theorems regarding branching processes with migration, we will discuss this in Section \ref{sec:BPwM}.

In fact, it is not hard to show that in the case of (SE), ballisticity does not depend only on $\delta$. One can build an example of probability measures over cookie environments satisfying (SE), (POS), (BD) and (UEL) with $\delta$ as high as desired where the walk will a.s.\ have $0$ speed. Such an example was first constructed by Mountford, Pimentel and Valle in \cite{MPV06}. A simple sketch follows:
Take some positive integer valued random variable $T$ with $\E(T)<\infty$ and $\E(T^2)=\infty$, and create from it a stationary ergodic point process on $\Z$ with i.i.d.\ interval lengths distributed like $T$. Put ``walls" - infinite stacks of cookies with bias $1$ to the right - in places where there are points from the process, and no cookies anywhere else. It is not hard to see that on such an environment the excited walk will go to infinity a.s., but its speed will be $0$. As noted in \cite{kosygina2012excited}, the same argument works if we replace the infinite stacks with stack with arbitrary high bounded stacks, and reducing their strength from $1$ to $p>1/2$. For a more detailed example and other related topics the reader is referred to \cite{MPV06} and example $5.7$ in \cite{kosygina2012excited}.

Since it is not possible to generalize the results regarding positive speed in the (IID) case to general case of (SE), and since, to the best of our knowledge, all known examples of environments giving positive speed stem directly from the (IID) case, it is interesting to find natural families of (SE) measures which are not (IID) to which such results could be extended. Our main results regard the analysis of such a family of random environments which are interesting in their own right - the leftover environments.
Given a cookie environment on $\Z$, Zerner \cite{zerner2005multi} introduced the leftover cookie environment as the environment of cookies that were left over by the walker on the original environment (i.e.\ the cookies that have not been eaten by the walker throughout his movement). If the walker is transient, this is well-defined (see section \ref{sec:arrows}) while if the walker is recurrent we will simply say that all cookies were eaten, that is the leftover environment $\om$ in this case is given by $\om(x,n)=\frac{1}{2}$ for all $x\in\Z$ and $n\in\N$. When the walker is transient, this environment is random (even when the original environment is deterministic). One cannot hope that the leftover environment will inherit (SE) from the original environment (because of the significance of the starting position), and in particular the leftover environment is far from being (IID) even for very nice transient environments. Having said that, note that first, it was shown by Zerner \cite{zerner2005multi} that the leftover environment does inherit a weaker - ``directional" - form of (SE) from the original environment (see Section \ref{sec:LLN}), and second, one can also define a variant of the leftover environment which will be (SE) by taking the initial position of the original walker to $-\infty$ properly (see section \ref{sec:stationary})

 One can iterate the construction and consider the $k$-leftover environment by looking at the environment remaining after a walker walks on the $(k-1)$-leftover environment, $k> 1$. Denote by $X^{(1)}:=X$ the (standard) excited random walk, and sequentially, for $k > 1$, let $X^{(k)}$ be the excited walk on the $(k-1)$-leftover environment.

Our first result shows that under mild assumptions a generalization of the 0-1 law for directional transience \cite{amir2013zero} holds. In particular, we show that if the first walker $X^{(1)}$ is transient in one direction, then a.s.\ no walker $X^{(i)}$ will be transient in the other direction.

\begin{theorem}[0-1 law for directional transience]\label{thm:TransienceThreshold}
Assume that $P$ is a probability measure over the space of cookie environments satisfying either (SE), (ELL) and (ND) or (IID), (WEL) and (BD). Then there is some $R=R(P)\in\Z\cup\{-\infty,+\infty\}$ such that the following hold.
 \begin{itemize}
 \item $X^{(k)}$ is a.s.\ recurrent if $k > |R|$
    \item $X^{(k)}_n\to +\infty$ a.s.\ if $k \le |R|$ and $R>0$
     \item $X^{(k)}_n\to -\infty$ a.s.\ if $k \le |R|$ and $R<0$
    \end{itemize}
\end{theorem}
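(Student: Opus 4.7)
\emph{Proof plan.} My plan is to prove the theorem by induction on $k$, combining the known 0-1 laws for directional transience of a single excited random walk with a rigidity argument which rules out opposite-direction transience for any subsequent walker on a leftover environment.

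The base case handles $X^{(1)}$ by invoking the 0-1 law for directional transience directly: under (SE)+(ELL)+(ND) this is the main result of \cite{amir2013zero}, and under (IID)+(WEL)+(BD) it is contained in the transience classification of \cite{kosygina2008positively}. Hence $X^{(1)}$ is a.s. in exactly one of the three asymptotic states (recurrent, transient to $+\infty$, transient to $-\infty$). If $X^{(1)}$ is recurrent, set $R=0$: by the convention on leftover environments after a recurrent walk, every later leftover environment is identically $\tfrac12$, so each $X^{(k)}$ with $k\ge 2$ is a simple random walk and is recurrent. Otherwise, by symmetry I assume $X^{(1)}$ is a.s. transient to $+\infty$ and reduce the theorem to the inductive step: if $X^{(k)}$ is a.s. transient to $+\infty$ then $X^{(k+1)}$ is a.s. either transient to $+\infty$ or recurrent. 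Granted this, $R=\sup\{k\ge 1:X^{(k)}\text{ is a.s. transient to }+\infty\}\in\N\cup\{+\infty\}$ verifies all three bullet points.

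The heart of the argument is to exclude the possibility that $X^{(k+1)}$ is a.s. transient to $-\infty$ while $X^{(k)}$ is a.s. transient to $+\infty$. My plan has two parts. First, I apply a 0-1 law to the conditional distribution of $X^{(k+1)}$ given the $k$-leftover environment: although the leftover environment is not (SE), it inherits a directional stationary/ergodic structure from the original environment, as observed by Zerner \cite{zerner2005multi} and to be developed in Section \ref{sec:LLN}, which is enough to run the argument of \cite{amir2013zero} (or its (IID) analogue). This reduces $X^{(k+1)}$ to one of the three deterministic asymptotic states. Second, I rule out the ``$-\infty$'' state using the key observation that, since $X^{(k)}$ is transient to $+\infty$, the leftmost site $L:=\inf_n X^{(k)}_n$ is almost surely finite, and at every site $x<L$ no cookie was consumed by $X^{(k)}$, so the $k$-leftover and $(k{-}1)$-leftover environments agree there. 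Iterating in $k$, the $k$-leftover environment far to the left of $0$ coincides with the original environment $\om$. If $X^{(k+1)}$ were transient to $-\infty$, then via the arrow coupling of Section \ref{sec:arrows} its tail behavior would be comparable to that of an excited random walk launched arbitrarily far to the left on a copy of $\om$; stationarity/ergodicity would then force $X^{(1)}$ itself to be transient to $-\infty$, contradicting the induction hypothesis.

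The main obstacle is making this last comparison rigorous: the walker $X^{(k+1)}$ interleaves excursions into $\{x<L\}$ with excursions into $\{x\ge L\}$, and cookies at the boundary may be manipulated by both sides, so one cannot directly assert that the restriction of $X^{(k+1)}$ to $\{x<L\}$ is an excited walk on $\om$. I expect the rigorous version to proceed via a regeneration-type decomposition for $X^{(k)}$ and a passage to the stationary leftover environment constructed in Section \ref{sec:stationary}; on that stationary object a clean 0-1 law for directional transience is available, and it is incompatible with transience opposite to the direction of $X^{(1)}$.
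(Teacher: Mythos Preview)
Your plan has a genuine gap, and the paper takes a different route that avoids it.

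The core issue is the first step of your inductive argument: you want a 0-1 law for directional transience of $X^{(k+1)}$ \emph{directly on the $k$-leftover environment}. But the cited 0-1 laws (Theorem~\ref{thm:ABO}, from \cite{amir2013zero} and \cite{kosygina2008positively}) are stated under full (SE) or (IID), whereas Lemma~\ref{lem:directionalSEleftover} only gives that the leftover environment is \emph{right}-stationary and ergodic. One-sided ergodicity is not shown to suffice, here or in the references, so you cannot yet conclude that $X^{(k+1)}$ is a.s.\ in one of the three states from properties of the leftover measure alone. Your second step---ruling out left transience by comparing to $\omega$ on the far left half-line---you yourself flag as incomplete, and your suggested fix via Section~\ref{sec:stationary} is circular: the results there (in particular the implication $(3)\Rightarrow(2)$ in the final Proposition of that section) invoke Theorem~\ref{thm:TransienceThreshold}.

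The paper sidesteps both problems by never leaving the original environment. Rather than apply a 0-1 law to the leftover measure, it couples $X^{(k+1)}$ with the earlier walkers via the $(k{+}1)$-minimum mob walk on $a$ (Section~\ref{sec:arrows}) and uses the Exchangeability Theorem~\ref{thm:exchangeability} to transfer transience/recurrence between the sequential and minimum schedulings. The inductive step is Proposition~\ref{prop:Inductive directional dichotomy}: assuming $a$ is a.s.\ $k$-right transient, a finite-modification argument (Lemma~\ref{lem:FinitemodificationForminimumWalk}, Corollary~\ref{cor:notLeftTransientThenProperScheduling}) forces $\limsup_n X^{(k+1)}_n=+\infty$ a.s., which already kills left transience; then a regeneration argument for the $(k{+}1)$-minimum walk (Lemma~\ref{le:inf_opt_reg}, Corollary~\ref{cor:CommingBack-k-walkIsTransient}, Corollary~\ref{cor:T.infinite.Vs.X.transient}) yields the dichotomy between recurrence and right transience. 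All of this takes place on the original (SE) or (IID) arrow environment, so the required stationarity and finite-modification properties are available without extra work.
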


As mentioned in the beginning of this section, Zerner \cite{zerner2005multi} showed that under (POS) and a slightly weaker version of (SE), the walk $X^{(1)}$ is (right) transient if and only if $\delta(P)>1$, and moreover, the leftover environment satisfies $\delta(\text{leftover}(P)) = (\delta(P)-1)_+$ (where by $\text{leftover}(P)$ we mean the distribution of the cookie environment left over after the first walker has gone to infinity). By showing that the leftover environment satisfies the mentioned weaker version of (SE) whenever the original environment does, he concluded that if $k+1>\delta(P)>k$ then the procedure of walking on leftover environments can be repeated $k$ times, with all $k$ walkers $X^{(k)}$ being transient to the right a.s., while the $(k+1)$-st walker is recurrent a.s.\
The following theorem shows the same for the case of (IID), (BD) and (WEL):

\begin{theorem}[Exact criterion for transience]\label{thm:bddCaseTransience}
Assume that $P$ satisfies (IID), (BD) and (WEL). Then
$X^{(k)}_n\to + \infty$ a.s.\ if $k<\delta$, $X^{(k)}_n\to - \infty$ a.s.\ if $k<-\delta$, and is recurrent a.s.\ otherwise.
\end{theorem}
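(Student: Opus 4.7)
The plan is to combine Theorem~\ref{thm:TransienceThreshold} with the Kosygina--Zerner transience criterion (transience iff $\delta>1$ under (IID), (BD), (WEL)) and an inductive analysis of the $k$-leftover environment via the backward branching-process-with-migration (BPwM) framework from \cite{basdevant2008speed,kosygina2008positively}. By the symmetry $\om\leftrightarrow 1-\om$ it is enough to treat $\delta\ge 0$, and by Theorem~\ref{thm:TransienceThreshold} each $X^{(k)}$ is either a.s.\ recurrent or a.s.\ transient in one deterministic direction, so only the threshold value needs to be pinned down.

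The base case $k=1$ is precisely the Kosygina--Zerner result. For the inductive step I would assume that $X^{(1)},\ldots,X^{(k)}$ are a.s.\ transient to $+\infty$ and examine $X^{(k+1)}$. The underlying heuristic is a drift-accounting identity: for a walker visiting a site $x$ a finite number of times $U_x$ before escaping to $+\infty$, optional stopping applied to the martingale $\sum_{j\le n}(\epsilon_j-(2\om(x,j)-1))$ (with $\epsilon_j$ the $j$-th step at $x$) yields $E\bigl[\sum_{j=1}^{U_x}(2\om(x,j)-1)\bigr]=1$, so each walker consumes exactly one unit of expected drift per site. Iterating, the expected per-site drift of the $k$-leftover environment equals $\delta-k$, and Kosygina--Zerner applied to such an environment would give transience of $X^{(k+1)}$ iff $\delta-k>1$, i.e., $\delta>k+1$, as claimed.

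To turn this heuristic into a proof without invoking (IID) of the leftover, I plan to work instead with the joint backward BPwM for the $k+1$ coupled walkers directly on the original i.i.d.\ environment: each walker contributes an independent $\geo$ offspring per down-step, while the cookie biases at each site are consumed sequentially (walker~$1$ first, then walker~$2$, and so on) and act as migration. The (BD) assumption makes the migration trivial after the first $M$ cookies at each site, which is what allows the classical BPwM techniques to apply. Subcriticality of the $(k+1)$-th walker's effective BPwM should match the drift-accounting threshold $\delta>k+1$. For the recurrence regime $|\delta|\le k$, Theorem~\ref{thm:TransienceThreshold} rules out one direction of transience, and either exhaustion of the biased cookies (when some earlier walker was already recurrent) or the leftover drift being $\le 1$ (otherwise) rules out the other.

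The main obstacle, and the reason a genuine mob-level argument is needed rather than an iterated one-walker argument, is precisely that the $k$-leftover is not (IID): the cookie stack at $x$ depends on the original environment to the right of $x$ via the earlier walkers' trajectories, so Kosygina--Zerner cannot be invoked as a black box on the leftover. The heart of the proof should therefore be a careful bookkeeping of the joint BPwM of the $k+1$ coupled walkers -- in particular how they share the cookies (migration) at each site in the prescribed order -- and the extraction of the correct subcriticality threshold $\delta=k+1$ from this coupled system.
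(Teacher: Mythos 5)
Your threshold bookkeeping is correct and you have correctly identified the central obstacle (the $k$-leftover is not (IID), so Kosygina--Zerner cannot be iterated as a black box), but the step you defer to -- ``a careful bookkeeping of the joint BPwM of the $k+1$ coupled walkers'' with the cookies ``consumed sequentially (walker~1 first, then walker~2, \dots)'' -- is exactly the heart of the proof, and as described it does not work. The sequential order is the one that defines the leftover environments, and under it the crossing counts at site $n$ depend on the \emph{entire future} trajectories of the earlier walkers to the right of $n$; the resulting site-indexed process is not Markovian, so no BPwM theorem applies to it. The paper's resolution is the Exchangeability Theorem (Theorem~\ref{thm:exchangeability}): the asymptotic local time of a $k$-mob is invariant under the choice of proper scheduling, which licenses replacing the sequential scheduling by the \emph{minimum} scheduling. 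Under the minimum scheduling no particle ever disturbs the environment to the right of the current minimum, and the up-crossing counts $(Z_n)_{n\ge 0}$ become a genuine Markov chain of the form \eqref{eq:BPwMformY} with constant migration $N=-(k-1)$ and $Z_0=k$ (Lemma~\ref{lem:ZisBPwM}); Theorem~\ref{thm:KZBPwM} with $\gamma=\delta+M$ (Lemma~\ref{lem:valueOfGamma}) then yields survival with positive probability iff $\delta>k$, in one shot for all $k$ walkers -- no induction on $k$ is needed or used.

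Two further points. First, you propose the \emph{backward} process (``offspring per down-step''); in this paper that process, $D_n$, is the tool for the \emph{speed} criterion (Section~\ref{sec:SpeedForkmin}), not for transience -- it is only well-defined after transience and the regeneration structure are established. Transience requires the forward up-crossing process together with the combinatorial equivalence $T_{-1}<\infty\iff Z_n=0$ for some $n$ (Theorem~\ref{thm:tzn}), which your proposal does not supply. Second, even granting survival of the branching process with positive probability, one still needs to upgrade $\PP_0(T_{-1}=\infty)>0$ to a.s.\ right transience of all $k$ leftover walkers; this requires the finite-modification arguments (Lemma~\ref{lem:TisFiniteExcursionsAreFinite}, Corollary~\ref{cor:T.infinite.Vs.X.transient}), Corollary~\ref{cor:seqRightTranEquivMinRightTan}, and the 0-1 law of Theorem~\ref{thm:TransienceThreshold}. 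Your martingale ``drift-accounting'' identity is, as you say, only heuristic: without (POS) the optional-stopping step fails precisely in the regime where $U_x$ has infinite expectation, which is why the BPwM machinery is unavoidable here.
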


Note that while we assume the original measure $P$ over cookie environments is (IID), the leftover environments are no longer (IID), thus the last theorem is not within the scope of the results of \cite{basdevant2008speed} and \cite{kosygina2008positively} (unless $|\delta| < 1$).

We now move to the question of the speed of the walkers $X^{(k)}$. We first discuss a law of large numbers (LLN).

\begin{theorem}[Law of large numbers]\label{thm:ZernerLLN}
Assume that $P$ satisfies (SE). There are constants $v_k$, $k\ge 1$, such that $\frac{X^{(k)}_n}{n}\to v_k$ a.s.\
\end{theorem}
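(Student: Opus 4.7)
The plan is to proceed by induction on $k$. The base case $k=1$ is Zerner's law of large numbers for one-dimensional excited random walk on a cookie environment satisfying (SE)~\cite{zerner2005multi}. For the inductive step, I want to show that the $(k-1)$-leftover environment $\om^{(k-1)}$ inherits from $P$ enough structure for Zerner's LLN proof to apply to $X^{(k)}$.

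The key observation is that Zerner's LLN does not use the full strength of (SE): it suffices that a suitable \emph{directional} form of stationarity-and-ergodicity holds for the environment seen from the current position of a transient walker. Zerner himself established that the leftover environment of a walker driven by a (SE) environment enjoys this directional (SE) (this is the result referenced in the introduction). Iterating this statement along the chain $X^{(1)}, X^{(2)}, \ldots, X^{(k-1)}$, where at each stage $\om^{(j-1)}$ plays the role of the underlying environment for $X^{(j)}$, yields directional (SE) for $\om^{(k-1)}$. If some intermediate walker fails to be transient, then by convention $\om^{(j)} \equiv \tfrac{1}{2}$, the subsequent $X^{(j+1)}, X^{(j+2)}, \ldots$ are simple symmetric random walks, and the LLN holds trivially with $v = 0$, so one may reduce to the case where the iterated directional (SE) is nontrivially inherited.

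Given directional (SE) of $\om^{(k-1)}$, the LLN for $X^{(k)}$ follows from the standard subadditive argument. Let $T^{(k)}_n = \inf\{j \ge 0 : X^{(k)}_j \ge n\}$, with $T^{(k)}_n := +\infty$ if $n$ is never reached. Directional stationarity of $\om^{(k-1)}$ gives a subadditive structure for the increments $T^{(k)}_n - T^{(k)}_m$, $m < n$, under the annealed measure for $(X^{(k)}, \om^{(k-1)})$; Kingman's subadditive ergodic theorem then yields $T^{(k)}_n / n \to \tau_k^+ \in [0,\infty]$ a.s., whence $X^{(k)}_n / n \to 1/\tau_k^+$ a.s.\ on the event of transience to $+\infty$ (interpreted as $0$ when $\tau_k^+=\infty$). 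The symmetric argument on leftward hitting times handles transience to $-\infty$, and on the recurrent event the limit is forced to be $0$ by sandwiching $X^{(k)}_n$ between its running extrema. Ergodicity collapses all these possibilities into a single deterministic constant $v_k$.

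The main obstacle is the inductive step itself: proving that directional (SE) is preserved under iterated leftover operations when the base environment for the $j$-th iteration is itself only directionally (SE), and in particular no longer (IID), and not stationary with respect to the original $\Z$-shift. One must carefully define the leftover environment as a shift-compatible measurable functional of $\om$ and the walker's trajectory, verify inheritance of directional stationarity, and separately argue ergodicity of the induced shift -- combining ergodicity of the underlying shift with the transience of the intermediate walkers. The possibility that some intermediate walker gets stuck in a finite region (allowed under (SE) without (ND)) is a further bookkeeping point but is dispatched by the same ``trivialize to $\tfrac{1}{2}$'' convention noted above.
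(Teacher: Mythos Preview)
Your overall strategy---induction on $k$, inheritance of \emph{directional} (SE) for the leftover environment via Zerner's observation, then Zerner's hitting-time LLN---matches the paper's approach. There is, however, a real gap in your inductive step at the first recurrent walker.

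Suppose $X^{(1)},\ldots,X^{(k-1)}$ are a.s.\ transient to the right and $X^{(k)}$ is recurrent. Iterating Zerner's lemma gives that $\omega^{(k-1)}$ is \emph{right} stationary and ergodic, but it is \emph{not} left stationary and ergodic: the previous right-transient walkers have perturbed $\omega$ only on a finite left tail, and the resulting law is not shift-invariant to the left. Hence your ``symmetric argument on leftward hitting times'' is not available for $X^{(k)}$, and your sandwiching by running extrema does not close the argument either: the right-directional LLN yields $\limsup_n X^{(k)}_n/n\le 0$ (hence $M_n/n\to0$), but you have no handle on $m_n/n$ without a left-side statement.

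The paper supplies the missing idea: let $M=\min\{X^{(j)}_n:1\le j\le k-1,\ n\ge0\}$, which is a.s.\ finite by right transience of the first $k-1$ walkers. To the left of $M$ the leftover environment coincides with the \emph{original} $\omega$, which is fully (SE) and in particular left stationary and ergodic. One waits until the recurrent $X^{(k)}$ first hits $M$ (a.s.\ finite time), and then applies the left-directional version of Zerner's LLN to the shifted original environment to obtain $\liminf_n X^{(k)}_n/n\ge0$. Combined with $\limsup_n X^{(k)}_n/n\le 0$ from the right-directional argument on $\omega^{(k-1)}$, this gives $v_k=0$. You should also note that a 0-1 law (Theorem~\ref{thm:TransienceThreshold} in the paper) is what guarantees that all transient walkers go in the \emph{same} direction, so that the directional inheritance is consistently one-sided; your appeal to ``ergodicity collapses all these possibilities'' glosses over this.
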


Since LLN holds, a natural question is if and when are the constants $v_k$ nonzero. We give an exact criterion for positive and negative speed in terms of $\delta$ for the leftover environments when the original environment satisfies (IID), (BD) and (WEL).

\begin{theorem}[Exact criterion for ballisiticity]\label{thm:bddCaseSpeed}
Assume that $P$ satisfies (IID), (WEL), and (BD), and define $v_k$ as in Theorem \ref{thm:ZernerLLN}. $|v_k|>0$ if and only if $k<|\delta|+1$, in which case $\delta$ and $v_k$ have the same sign.
\end{theorem}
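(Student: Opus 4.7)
The strategy is to combine Theorem \ref{thm:bddCaseTransience} with an iterated regeneration structure on the mob $(X^{(1)},\ldots,X^{(k)})$ and an adaptation of the Basdevant--Singh / Kosygina--Zerner tail analysis of the branching process in downcrossings.

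The zero-speed direction in the recurrent regime is automatic: Theorem \ref{thm:bddCaseTransience} gives that $X^{(k)}$ is recurrent once $k\ge |\delta|$, so by (ND) it visits every vertex infinitely often and hence $v_k=0$. By reflection symmetry we may assume $\delta>0$ throughout the transient regime, so that a positive-speed result will automatically come with $\mathrm{sgn}(v_k)=+1=\mathrm{sgn}(\delta)$.

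For the positive-speed half, I would set up a joint regeneration structure for the mob: a joint regeneration time is a random time past which none of the $k$ walkers ever re-enters the region to the left of the current running maximum of $X^{(k)}$, and at which every earlier walker $X^{(j)}$, $j<k$, has already left that region. Under (IID) on the original environment, with (BD) bounding the stack depth and (WEL) providing fresh left-barriers a.s., one verifies in the standard Sznitman--Zerner fashion that such joint regenerations occur a.s.\ infinitely often whenever all $k$ walkers are transient to $+\infty$, and that the inter-regeneration increments of $X^{(k)}$ in space and time form an i.i.d.\ sequence under $\AnnealedCookies$. The usual Kesten--Kozlov--Spitzer-style argument then shows that $v_k>0$ if and only if the annealed expected inter-regeneration time is finite.

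The technical heart of the argument is this moment condition. The inter-regeneration time of $X^{(k)}$ equals the sum over the block's sites of the local time of $X^{(k)}$ there, which is driven by a branching-process-in-downcrossings representation of the type analyzed by Basdevant--Singh. Because $X^{(k)}$ sees the $(k-1)$-leftover, this branching process is nested inside those of $X^{(1)},\ldots,X^{(k-1)}$; equivalently, on the mob level it is a $k$-type branching chain with migration driven by the original i.i.d.\ input, in which each successive layer consumes one unit of drift, in line with Zerner's identity $\delta(\mathrm{leftover}(P))=(\delta-1)_+$ already invoked in Theorem \ref{thm:bddCaseTransience}. Iterating the Basdevant--Singh tail computation through the $k$ layers, the tail exponent of the stationary distribution of the iterated chain is determined by $\delta$ and $k$; the regime in which this exponent is large enough to force a finite mean regeneration time yields exactly the stated criterion $k<|\delta|+1$, and in the complementary regime the same computation produces tails heavy enough that $\E[\tau_{j+1}-\tau_j]=\infty$, whence $v_k=0$.

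The main anticipated obstacle is precisely this iterated branching step: although the original environment is (IID), its leftovers are not, so Basdevant--Singh cannot be invoked as a black box on the marginal environment of $X^{(k)}$. One must work on the joint mob level, where the Markov structure is restored thanks to (BD) and (IID), and carefully track how the Kesten-type tail exponent propagates through each of the $k$ nested layers. (BD) is crucial to keep each layer's offspring law in a uniformly-bounded class to which the Kesten recursion applies, and (WEL) ensures that the joint regeneration structure remains non-trivial at every layer.
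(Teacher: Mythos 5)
Your overall architecture (dispose of the recurrent regime via Theorem \ref{thm:bddCaseTransience}, reduce positive speed to a finite-mean regeneration time, and decide that moment condition via a branching process in downcrossings with migration) matches the paper's, but the technical heart of your plan --- the ``iterated Basdevant--Singh tail computation through the $k$ nested layers'' --- is exactly the step the paper does \emph{not} perform, and as proposed it does not close. The downcrossing process of $X^{(k)}$ alone, in its $(k-1)$-leftover environment, is not a Markov chain and is not covered by any existing Kesten-type recursion; you flag this as the main obstacle and answer it by saying one should ``work on the joint mob level,'' but you never say what single object at the mob level is analyzed or how its analysis is transferred back to the individual walker $X^{(k)}$, which is the quantity the theorem is about. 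The paper's resolution has two ingredients, both absent from your proposal. First, there is no iteration at all: the total downcrossings $D_n$ of the entire $k$-min mob between consecutive regenerations form \emph{one} branching process with migration of the form \eqref{eq:BPwMformY}, with the number of particles entering only as a shift $N=M+k$ of the migration constant (Lemma \ref{lem:DandZhaveSameDistr}); a single application of Theorem \ref{thm:KZBPwM} plus Lemma \ref{lem:gamma'} then gives that the mob's expected inter-regeneration time is finite iff $\delta>k+1$ (Theorem \ref{thm:PosSpeed}).

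Second, and more importantly, you are missing the bridge from the mob to $X^{(k)}$. The paper gets it from the Exchangeability Theorem \ref{thm:exchangeability}: the total local time accumulated up to a regeneration position $r_n$ is scheduling-invariant, so $\tau^{(k-\min)}_n=\sum_{i=1}^{k}\tau^{(i)}_n$, which upon dividing by $r_n$ and passing to the limit yields
\[
\frac{1}{v^{(k-\min)}}=\sum_{i=1}^{k}\frac{1}{v_i},
\]
together with the fact that the same positions $r_n$ are regeneration positions for every walker $X^{(i)}$ (Remark \ref{re:reg_exchangability}). This harmonic-sum identity is what converts the criterion $v^{(k-\min)}>0\iff\delta>k+1$ into $v_k>0\iff k<\delta+1$ (and, in the boundary case, forces $v_k=0$ from $v^{(k-\min)}=0$ once $v_1,\dots,v_{k-1}>0$). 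Without some substitute for this step, knowing the tail behaviour of any mob-level branching chain tells you nothing about $v_k$ specifically, so the proposal as written does not prove the theorem.
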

To the best of our knowledge, the theorem gives the first examples of probability measures over cookie environments satisfying directional (SE) with positive speed that do not trivially follow by comparison to an (IID) measure with positive speed. To get (SE) examples we define in section \ref{sec:stationary} a stationary ergodic variation of leftover environments, to which all the above theorems hold.
Note that by Theorem \ref{thm:bddCaseTransience}, we already know that if $k\ge\delta$ then the walk $X^{(k)}$ is recurrent, and thus $v_k=0$.
Combining the two theorems on speed and transience gives that when starting from a probability measures over cookie environments satisfying (IID), (BD), and (WEL) with expected drift per site $k<\delta\leq k+1$ ($k<-\delta\leq k+1$), the first $(k-1)$-st walkers will have strictly positive (negative) speed, the $k$-th walker will be transient to the right (left) with $0$ speed, and all subsequent walkers will be recurrent (all statements holding with probability $1$).

In order to analyze the walk on the $k$-leftover environment, we introduce the notion of an excited ``mob" - a set of $k$ walkers moving on the environment according to some given scheduling between them. We introduce a natural coupling between different processes on the same environment and use it to show that many properties of the movement of the mob are invariant under a wide choice of the scheduling. We then use the freedom we have in moving the mob to translate many of the techniques used for excited random walks to mobs, and finally to prove the main results on the leftover environments.

\subsection{Structure of the paper}
The paper is organized as follows:
In section \ref{sec:arrows} we discuss excited random walks in arrow environments, introduce mob walks and develop their basic theory which will be used throughout the paper. We then analyse properties of the $k$-minimum walk, a canonical way to move a mob of $k$ walkers.
In section \ref{se:ppcokies} we prove a 0-1 law for directional transience for the leftover environments (Theorem \ref{thm:TransienceThreshold}).
In section \ref{sec:LLN} we prove the Law of Large Numbers for leftover environments (Theorem \ref{thm:DLLN}).
In section \ref{sec:BPwM} we discuss some known results concerning branching processes with migration.
In section \ref{sec:transienceLeftover} we move to dealing with the (IID) and (BD) case and show the exact criterion for transience vs. recurrence for mob walks and for walks on leftover environments, Theorem \ref{thm:bddCaseTransience}.
In section \ref{sec:SpeedForkmin} we prove an exact criterion for positive speed of the minimum walk in the (IID) (BD) case.
In section \ref{sec:SpeedLeftover} we give a formula relating the speed of the minimum walks and the walkers on the leftover environments, and prove Theorem \ref{thm:bddCaseSpeed}.
In section \ref{sec:stationary} we define  a stationary ergodic version of the leftover environments, and show that the main theorems of this paper hold also for these environments.
Finally, in section \ref{sec:Remarks} we state some further remarks and open problems.
\section{Combinatorial Perspective - arrows}\label{sec:arrows}
\subsection{Arrow environments, Local time and Leftovers}

Given a cookie environment $\cookieenv$, we can realize $\cookieenv$ into a (random) list of arrows, or instructions, which tell the walker in which direction to walk in every possible visit to any position. This is done by a priori flipping an independent $\cookieenv(x,n)$-coin for each $x\in \Z$ and $n\in \N$. If we then let a walker walk according to this list of instructions, the law of a (non-random) walk walking according to the (random) list of instructions sampled from $\omega$ is the same as the quenched law of the excited random walk on $\omega$. This leads us to the definition of arrow environments given below. Arrow environments were studied in the work of Holmes and Salisbury \cite{holmes2012combinatorial}, where they were called ``arrow systems". They considered partial orderings of these arrow systems and used them to couple cookie environments. They then deduced various monotonicity results on excited random walks and related models. Using related coupling techniques Peterson \cite{peterson2012strict} managed to strengthen previous monotonicity results of Holmes and Salisbury \cite{holmes2012combinatorial} and of Kosygina and Zerner \cite{kosygina2008positively} and proved strict monotonicity of speed and return probability to $0$ of excited random walk with respect to the cookie strengths.
We take a different route and use arrow environments mainly as a natural way to couple various processes on the same cookie environment, which will prove key to our analysis of the leftover environments. Considering different processes on the same arrow environment allows us to distill the ``combinatorial" part from some of the probabilistic arguments regarding ERW.

In this section, we restrict ourselves to combinatorial aspects of arrow environments, while the connection to cookie environments and probabilistic arguments is deferred to the following sections.

\begin{defi}\label{d:arrow}
An \emph{arrow environment} is an element $a\in\{-1,1\}^{\Z \times \N}$.
A \emph{walk} on an arrow environment $a$ and its \emph{local time} are two sequences $X_t\in \Z$, $t\geq 0$, and $L_t: \Z \to \N$, $t\geq 0$, defined by
$$X_0=x,\, L_0\equiv 0,\, \ L_{t+1}(x) = \#\{1\le s\le t : \ X_s=x\},x\in\Z,\,     X_{t+1}=X_t+ a ( X_t , L_{t+1}(X_t)).$$
\end{defi}

We will assume $x=0$ unless specified otherwise. This associates to each arrow environment a well-defined non-random walk and its local time.

\begin{defi}
Given an arrow environment $a$, the \emph{asymptotic local time} $L:\Z \rightarrow \N\cup\{\infty\}$ is given by
\begin{equation}
L(x) = \lim_{t\rightarrow\infty}L_t(x).
\end{equation}
Note that $L$ is well-defined since $L_t(x)$ is non-decreasing in $t$ for each $x\in\Z$.
\end{defi}

We will require the following non-degeneracy condition to insure the walk $X$ does not stay within a finite interval:
\begin{defi}
An arrow environment $a\in\{-1,1\}^{\Z \times \N}$ is called \emph{non-degenerate} if for each $x\in \Z$ $a(x,n)\neq a(x,n+1)$ for infinitely many $n$.
The set of all non-degenerate arrow environments is denoted by $A$.
\end{defi}

\begin{remark}\label{rem:IfForOnLIsInfThenForAllLIsInf}
For $a\in A$, if $L(x)=\infty$ for some $x\in\Z$, then the non-degeneracy condition implies $L(x\pm1)=\infty$. Iterating the last argument gives that
the asymptotic local time $L$ of the walk on $a$ is either finite for all $x\in \Z$ or infinite for all $x\in \Z$.
\end{remark}

\begin{defi}
An arrow environment $a\in A$ is called \emph{transient} if the asymptotic local time of the walk on $a$ is finite everywhere.
Define $A_1\subset A$ to be the set of transient arrow environments.
\end{defi}
By Remark \ref{rem:IfForOnLIsInfThenForAllLIsInf}, $a\in A$ is transient if and only if the walk's asymptotic local time is finite at \emph{some} location.
Also, $a\in A$ is transient if and only if $\lim_{t\to \infty} X_t = +\infty$ or $\lim_{t\to \infty} X_t = -\infty$.

Given $a\in A_1$, consider the (first) \emph{leftover environment} $LO = LO(a)$ defined by
\begin{equation}\label{eq:LeftoverEnv}
LO(x,n) := a(x,n+L(x))
\end{equation}
(i.e.\ we dispose of all ``arrows" that have been used by the walk).
Consider now the leftover environment. If it is also transient, then we may define the environment that is left over from a second walk on that environment.
In fact, we can iterate this as long as the leftover environments we get are all transient. More formally, let $A_0=A$ and define by induction for $j\geq 1$
$A_j\subset A_{j-1}$ to be the set of all $a\in A_{j-1}$ such that $LO(a)$ is transient.
An arrow environment $a\in A_k$ is called \emph{$k$-transient}. 
If $a$ is $k$-transient then we define recursively
\begin{equation}\label{eq:jthLeftover}
a_0=a,\ a_j=LO(a_{j-1}),\ k\ge j \ge 0.
\end{equation}
A way to think about the leftover environments is to imagine $k$ ``walkers" starting at the origin on an arrow environment $a\in A_{k-1}$.
We first take one of the walkers, and walk it according to the arrow environment $a$ an infinite number of steps until it ``goes to infinity".
We then take the next walker and move it according to the environment $a_1$ left after the first walker passed through.
After the second walker ``went to infinity" also, we are left with an environment $a_2$ on which the third walker moves and so on.
Thus we may think of this as ``sequentially" walking the $k$ walkers on $a$. This is well-defined as long as $a\in A_{k-1}$, and the last walker (the $k$-th) will also have everywhere-finite asymptotic local time
if and only if $a\in A_k$. In view of the above discussion we shall define $L^{(k-\mathbf{seq})}$ to be the total local time achieved by the $k$ walkers on the environment $a$. That is, if $L^{(j)}$ is the local time of the walk on the environment $a_{j-1}$ (see \eqref{eq:jthLeftover}) $j=1,...,k$, then
\begin{equation}\label{eq:Lkseq}
L^{(k-\mathbf{seq})}=\sum_{j=1}^{k}L^{(j)}.
\end{equation}

In the above description the walkers move ``sequentially" one after the other ``finished" moving. This motivates a study of other ways to move $k$ walkers on a given arrow environment.
In the remainder of the section we will study how the walkers may be moved together in some arbitrary order, and provide conditions under which the asymptotic local time is invariant to this choice of ordering.

\begin{remark}
Throughout this paper we assume that all of the walkers are initially located at the origin, with the exception of Section \ref{sec:stationary}. This assumption is made for a convenient presentation. However, we wish to stress that all definitions and propositions can be easily transformed so that the walks will have different (yet fixed in advance) initial positions, with no additional arguments.
\end{remark}

\subsection{Mob walks and exchangeability}

In this section we consider a ``mob" of $k$ particles on the arrow environment - that is moving them one at a time by choosing which one should move at each step.
(Note that we call use the terms `walker' and `particle' indistinguishably.)

Fix an arrow environment $a\in A$ and a function $S:\N_0\to \{1,...,k\}$. We call the function a $S$ \emph{$k$-scheduling} and define the $S$-mob walk $X=(X^{(1)},...,X^{(k)})$ and the local time $L^{(S)}$ by:
$$X^{(i)}_0=0 \text{ for all } 1\le i\le k,\ L_0\equiv 0$$ and for $t\ge 0$

$$ L_{t+1}^{(S)}(x)=\# \{0\le s \le t : \text{ there is some } 1\le i\le k \text{ such that } X^{(i)}_s = x \text{ and } X^{(i)}_{s+1} \neq x \}$$\label{def:LocalTime}
and
$$X^{(j)}_{t+1} =
\begin{cases} X^{(j)}_{t} + a(X^{(j)}_{t},L_{t+1}(X^{(j)}_{t})) & \text{ if } j=S(t)\\
X^{(j)}_{t} & \text{ if } j\ne S(t).
\end{cases}
$$
Note that $ L_{t+1}^{(S)}(x)=\# \{0\le s\le t : X^{(S(s))}_s = x  \}$.

Define the asymptotic local time $L^{(S)}$ by $L^{(S)}(x)=\lim_{t\to\infty}L_t^{(S)}(x)$.

\begin{remark}\label{rem:koneforallallforone}
Given any $S:\N_0\to \{1,...,k\}$, one can generalize Remark \ref{rem:IfForOnLIsInfThenForAllLIsInf} to $S$-mob walks. The same arguments give that for $a\in A$ the $S$ - asymptotic local time
is either finite everywhere or infinite everywhere.
\end{remark}

To avoid degeneracies, we require that each particle is chosen infinitely often:
\begin{defi}
A function $S:\N_0\to \{1,...,k\}$ is called a \emph{proper $k$-scheduling} if $|S^{-1}(i)|=\infty$ for all $1\le i\le k$.
\end{defi}

It is useful to note that one may choose a $k$-scheduling in a way that will depend on a given arrow environment. A useful class of scheduling are the ``algorithmic" scheduling, in which the particle to move next is chosen according to a deterministic function (``algorithm") of the history of the $k$-mob walk until that time.
Since given an arrow environment, there is no randomness involved, any such ``algorithm" defines a function $S:\N_0\to \{1,...,k\}$, and for it to be a proper $k$-scheduling one just has to make sure
that the condition $|S^{-1}(i)|=\infty$ holds for all $1\le i\le k$. Note that in such scheduling for each $n$, $S(n)$ depends only on a finite number of arrows in the environment, which ensures that given a measure over arrow environments, the $k$-mob walk is a stochastic process. (See Section \ref{se:ppcokies}.)

The next lemma shows that the asymptotic local time is invariant under the choice of proper $k$-scheduling.

\begin{lemma}\label{lem:exchangeabilty}
Let $a\in A$ and let $S$ be a proper $k$-scheduling. Then for any other proper $k$-scheduling $S'$, $L^{(S)}(x)= L^{(S')}(x)$ for all $x\in\Z$.
\end{lemma}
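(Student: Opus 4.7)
The plan is to prove this by a flow-conservation argument on the arrow environment combined with an exchange-of-firings property. By Remark~\ref{rem:koneforallallforone} applied to each of $S$ and $S'$, both $L^{(S)}$ and $L^{(S')}$ are either identically $+\infty$ or finite throughout $\Z$. It thus suffices to show (i) the two dichotomies coincide, and (ii) in the finite regime, the values agree pointwise.

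For (ii), I would exploit the fact that arrows at each site are used in order, so the numbers of $+1$-arrows and $-1$-arrows consumed at site $x$ are determined entirely by $L^{(S)}(x)$ and the environment $a$. Writing $r_x(n) := \#\{1 \leq j \leq n : a(x,j) = +1\}$ and $\ell_x(n) := n - r_x(n)$, the counts at $x$ under $S$ are $r_x(L^{(S)}(x))$ and $\ell_x(L^{(S)}(x))$. In the finite regime every particle is transient, escaping to $\pm\infty$, and flow conservation at each vertex (noting the $k$ sources at the origin) yields
\[
r_x(L^{(S)}(x)) - \ell_{x+1}(L^{(S)}(x+1)) = k_+^{(S)} \text{ for } x \geq 0,
\]
together with a symmetric identity on $x \leq -1$ involving $-k_-^{(S)}$, where $k_\pm^{(S)}$ is the number of particles going to $\pm\infty$ under $S$. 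Given the triple $(L^{(S)}(0), k_+^{(S)}, k_-^{(S)})$, these recursions pin down $L^{(S)}(x)$ uniquely for all $x$.

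The task then reduces to showing the triple $(L^{(S)}(0), k_+^{(S)}, k_-^{(S)})$ is scheduling-invariant. I would approach this with an exchange-of-firings lemma: swapping two adjacent firings of particles at distinct positions leaves the configuration untouched, while swapping firings at a common site consumes the same two consecutive arrows in either order, preserving the local-time profile exactly (the two particles may swap positions after such a swap, but this merely relabels them). Iterating such local exchanges and invoking properness allows one to transform $S$ into $S'$ over arbitrarily long time windows while tracking the label-independent asymptotic quantities $L(0)$, $k_+$, $k_-$, forcing their invariance.

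The main obstacle is the careful propagation of the exchange argument through an infinite scheduling, since naive adjacent swaps at a common site can alter the subsequent meaning of labeled firings. The key observation to make this rigorous is that both $S$ and $S'$ are proper, so each particle is fired infinitely often; any label swap induced by an exchange is thus ``absorbed'' asymptotically. The dichotomy invariance (i) should follow from a complementary pigeonhole argument: if $L^{(S)}(x_0) = \infty$, then some labeled particle visits $x_0$ infinitely often under $S$; combined with the non-degeneracy of $a$ and the properness of $S'$, this should force $L^{(S')}$ to be infinite everywhere as well.
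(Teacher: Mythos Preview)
Your proposal has genuine gaps at several points, and the paper's argument is both simpler and quite different.

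\textbf{The flow-conservation recursion is under-determined.} You claim that knowing $(L^{(S)}(0),k_+^{(S)},k_-^{(S)})$ together with $r_x(L^{(S)}(x))-\ell_{x+1}(L^{(S)}(x+1))=k_+^{(S)}$ pins down all the local times. It does not: the map $n\mapsto\ell_{x+1}(n)$ is non-decreasing but not injective (it is constant across runs of $+1$ arrows), so knowing $\ell_{x+1}(L^{(S)}(x+1))$ does not determine $L^{(S)}(x+1)$. To disambiguate you would need the direction of the last exit at each site, but that is precisely one of the things whose scheduling-invariance you have not yet established. So step (ii) does not go through as written.

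\textbf{The exchange-of-firings argument is not made rigorous.} You correctly note that a swap at a common site relabels the particles, and that this changes the meaning of all subsequent labeled firings in the schedule. Saying that properness ``absorbs'' such swaps asymptotically is not an argument; one would need a concrete invariant that survives an infinite composition of such swaps, and you have not produced one. Likewise, your dichotomy argument for (i) is circular: that some particle visits $x_0$ infinitely often under $S$ says nothing directly about $S'$.

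\textbf{What the paper does instead.} The paper gives a short Diaconis--Fulton style abelian argument: assuming $L^{(S)}$ is finite everywhere, suppose for contradiction that $L^{(S')}$ eventually exceeds it at some site, and take the first time $j$ at which this happens. At that moment a particle under $S'$ fires from some $v$ with $L_j^{(S')}(v)=L^{(S)}(v)$. By minimality of $j$, every earlier $S'$-step used an arrow that $S$ also used, so the in-degree $I_j^{(S')}(v)\le I^{(S)}(v)=L^{(S)}(v)=L_j^{(S')}(v)$; but $I_j^{(S')}(v)=L_j^{(S')}(v)$ means no particle is at $v$ at time $j$, a contradiction. Symmetry gives equality. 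The infinite case then follows by swapping the roles of $S$ and $S'$. This avoids both the flow-recursion ambiguity and any exchange-of-firings bookkeeping.
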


The proof of Lemma \ref{lem:exchangeabilty} is an adaptation of Proposition 4.1 of \cite{diaconis1991growth}. The main difference is that in our case we do not assume a finite termination, but instead we require the asymptotic local time to be everywhere finite.
Before we get to the proof, we introduce some notion.
Define $I_j^{(S)}(x)$ to be the in-degree of the vertex $x$ at time $t$ by the $S$-mob walk $X$. That is,
$$I_0^{(S)}(x)= \begin{cases} k  & \text{ if } x=0 \\ 0 & \text{otherwise } \end{cases}\ \, \text{ and } \, \
I_{t+1}^{(S)}(x)=\{0\le s\le t: X_{s+1}^{(S(s))}=x\} + I_0^{(S)}(x).$$
Let $I^{(S)}(x)=\lim_{t\to\infty} I_t^{(S)}(x)\in\N_0\cup\{\infty\}$.
We note that $I_t^{(S)}(x)\ge L_t^{(S)}(x)\ge I_t^{(S)}(x)-k$ for all $t\ge 0$, and if $S$ is a proper scheduling then every visit in $x$ is eventually followed by getting out from $x$, namely $I^{(S)}(x)=L^{(S)}(x)$.
Note also that for each $t$ and $x$, if $I_t^{(S)}(x) = L_t^{(S)}(x)$, it means that there are no particles in place $x$ at time $t$, that is $X^{(i)}_t\ne x$ for all $1\le i \le k$.

\begin{proof}[Proof of Lemma \ref{lem:exchangeabilty}]
We shall first prove the lemma in case that the local time $L^{(S)}(x)$ is finite for all $x\in\Z$.
It is enough to show an inequality: $L^{(S)}(x)\ge L^{(S')}(x)$ for all $x\in\Z$. Indeed, by interchanging the roles of $S'$ and $S$ and reusing the same statement, we get the other inequality.
Assume towards a contradiction that there is some $x\in\Z$ such that $L^{(S)}(x)< L^{(S')}(x)$. Since $L_0^{(S)}\equiv L_0^{(S')}\equiv 0$ and both functions are non-decreasing with respect to time in each position, we may fix $j$ to be the minimal time index so that there is some $x\in\Z$ such that $L^{(S)}(x)< L_{j+1}^{(S')}(x)$.
Denote by $v$ the position of the particle that was moved under the $S'$-scheduling at time $j$, that is $v=X'^{(S'(j))}_j$, where $X'$ is the $S'$-mob walk. Then $L^{(S)}(v)< L_{j+1}^{(S')}(v)$ and $L^{(S)}(v)= L_{j}^{(S')}(v)$, as the local time at time $j+1$ differs from the local time at time $j$ by adding $1$ at exactly one position.
By minimality of $j$, $L^{(S')}_{j}(S'(s)) \le L^{(S)}(S'(s))$ for all $0\le s\le j$. Therefore the in-degree of $v$ at time $j$ by the walk $X'$ is bounded from above by the in-degree of $v$ at time $\infty$ by the walk $X$. Therefore $I_j^{(S')}(v)\le I^{(S)}(v)=L^{(S)}(v)=L_j^{(S')}(v)$, and so $I_j^{(S')}(v) = L_j^{(S')}(v)$. Hence $X'^{(S'(j))}_j\ne v$, a contradiction. Therefore the lemma is proved when $L^{(S)}$ is everywhere finite.

Assume now that $L^{(S)}(x)=\infty$ for some $x$. By Remark \ref{rem:IfForOnLIsInfThenForAllLIsInf} it holds in this case that $L^{(S)}\equiv\infty$. Now if by contradiction $L^{(S')}(x')<L^{(S)}(x')=\infty$ for some $x'$, then using again Remark \ref{rem:IfForOnLIsInfThenForAllLIsInf}, we conclude that $L^{(S')}(x'')<\infty$ for all $x''\in\Z$. But then applying the lemma for the finite local time case while interchanging the roles of $S'$ and $S$ would imply that $L^{(S)}(x')=L^{(S')}(x')<\infty$, a contradiction.
\end{proof}

\begin{remark}\label{rem:exchangeability}
\begin{enumerate}
    \item The same proof gives the following variation of the last lemma. For any set $I\subset \Z$, and any proper $k$-scheduling $S$, if we stop each particle once it exits $I$ (i.e.\ those particles do not move even if ``chosen" by the scheduling) then the asymptotic local time in $I$ is independent of the choice of $S$ as long the local time is finite (everywhere) in the set $I$ for some proper $k$-scheduling. For finite sets this is a special case of Diaconis-Fulton \cite[Proposition 4.1]{diaconis1991growth}).

    \item The proof of the last lemma also applies to any bounded degree graph with the appropriate definitions of arrow environments, local time and non degeneracy conditions.

    \item \label{rem:nonproper} If we remove the condition that the $k$-scheduling $S'$ is proper, we get an inequality $L^{(S')}(y)\leq L^{(S)}(y)$ for all $y$. Indeed, otherwise there is some $y$ and a minimal $t$ so that $L^{(S')}_t(y)>L^{(S)}(y)$ (and in particular, as $a\in A$, $L^{(S)}$ is finite everywhere). We can define now a new proper $k$-scheduling $S''$ by using $S'$ until time $t$ and then going over the $k$ particles in periodic order. We get that $L^{(S'')}(y)\geq L^{(S')}_t(y)>L^{(S)}(y)$, contradicting the Lemma.
\end{enumerate}
\end{remark}
We conclude the section by showing that the leftover environment of ``sequentially" walking the $k$ walkers on $a$ cannot be changed by choosing any other proper scheduling. Remember that $L^{(k-\mathbf{seq})}$ was defined in \eqref{eq:Lkseq}.

\begin{theorem}[Exchangeability]\label{thm:exchangeability}
For any $k$-transient environment $a\in A_k$ and any proper $k$-scheduling $S$, the local time $L^{(S)}(x)=L^{(k-\mathbf{seq})}(x)$ for all $x\in\Z$.
\end{theorem}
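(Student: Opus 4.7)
The plan is to adapt the Diaconis-Fulton-style in-degree versus local-time contradiction from the proof of Lemma~\ref{lem:exchangeabilty}, run symmetrically in both directions between $L^{(S)}$ and $L^{(k-\mathbf{seq})}$. The crucial input is the identity $I^{(\mathbf{seq})}(v) = L^{(k-\mathbf{seq})}(v)$ for every $v \in \Z$, which holds because $a \in A_k$ forces each of the $k$ sequential walkers to leave any finite position forever: a direct tally of arrivals at $v$ gives $\sum_j (L^{(j)}(v) - \I(v=0)) + k\I(v=0) = L^{(k-\mathbf{seq})}(v)$. The proper scheduling $S$ already satisfies $I^{(S)}(v) = L^{(S)}(v)$ once $L^{(S)}$ is finite, so both the sequential process and $S$ play symmetric roles.

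For the inequality $L^{(S)} \leq L^{(k-\mathbf{seq})}$, I suppose for contradiction that $L_{t+1}^{(S)}(v_0) > L^{(k-\mathbf{seq})}(v_0)$ for some $t \in \N_0$ and $v_0 \in \Z$, and take $t$ minimal. Then $L_t^{(S)}(x) \leq L^{(k-\mathbf{seq})}(x)$ for all $x$, the walker moved by $S$ at time $t$ sits at $v_0$, and necessarily $L_t^{(S)}(v_0) = L^{(k-\mathbf{seq})}(v_0)$. Since arrows at each position are consumed in stack order, the pointwise inequality on local times means that $S$ has used at each position a prefix of the arrows used by the sequential process. This yields $I_t^{(S)}(v_0) \leq I^{(\mathbf{seq})}(v_0) = L^{(k-\mathbf{seq})}(v_0) = L_t^{(S)}(v_0)$; combined with $I \geq L$ always, equality holds, so no particle of $S$ occupies $v_0$ at time $t$, contradicting the fact that $S$ moves a walker out of $v_0$ at time $t$.

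For the reverse $L^{(k-\mathbf{seq})} \leq L^{(S)}$ I run the symmetric argument. By the previous direction, $L^{(S)} \leq L^{(k-\mathbf{seq})} < \infty$, so by properness of $S$ one obtains $I^{(S)}(v) = L^{(S)}(v)$. The sequential process does not fit in the $\N_0$-indexed scheduling framework, so I instead index its moves by pairs $(j, t) \in \{1,\dots,k\} \times \N_0$ in lex order, with partial sequential local time $\ell_{(j,t)}(x) := \sum_{i < j} L^{(i)}(x) + L^{(j)}_t(x)$, monotone increasing in lex order with supremum $L^{(k-\mathbf{seq})}$. Taking the lex-minimal $(j^*, t^*)$ for which $\ell_{(j^*, t^*+1)}(v_0) > L^{(S)}(v_0)$ at some $v_0$, the same in-degree calculation forces no particle of the sequential process to occupy $v_0$ at sequential time $(j^*, t^*)$, contradicting that walker $j^*$ is at $v_0$ preparing to execute its $(t^*+1)$-st move.

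The main obstacle is the lex-minimality step at ``phase transitions'' $(j^*, 0)$, where walker $j^*-1$ has just completed its infinite walk: the needed bound $\sum_{i < j^*} L^{(i)}(x) \leq L^{(S)}(x)$ is not implied by any single lex predecessor, but instead follows by letting $t \to \infty$ in the bounds $\ell_{(j^*-1, t+1)}(x) \leq L^{(S)}(x)$, which hold for every $t$ by the lex-minimality hypothesis, combined with the monotone convergence $L^{(j^*-1)}_t \uparrow L^{(j^*-1)}$. Once this supremum step is in place, the rest of the argument in each direction parallels the proof of Lemma~\ref{lem:exchangeabilty}.
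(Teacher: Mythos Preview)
Your proof is correct but takes a genuinely different route from the paper. The paper reduces the theorem to Lemma~\ref{lem:exchangeabilty} (any two proper $k$-schedulings have the same asymptotic local time) together with an explicit construction, carried out in Lemma~\ref{le:sequencial}, of a single proper $k$-scheduling whose local time equals $L^{(k-\mathbf{seq})}$: the walkers are interleaved in blocks, sending walkers $1,\dots,j-1$ far enough ahead before each step of walker $j$ so that walker $j$ always sees the same arrows as in $a_{j-1}$. You instead bypass this construction entirely and prove both inequalities $L^{(S)}\le L^{(k-\mathbf{seq})}$ and $L^{(k-\mathbf{seq})}\le L^{(S)}$ directly, re-running the Diaconis--Fulton in-degree argument with the sequential process treated as a $\{1,\dots,k\}\times\N_0$-indexed ``scheduling'' under the (well-ordered) lexicographic order. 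Your route trades the explicit interleaving of Lemma~\ref{le:sequencial} for a careful supremum step at the phase transitions $(j^*,0)$; it has the conceptual advantage of showing that the abelian property does not rely on realizing the sequential walk as an honest $\N_0$-indexed proper scheduling, while the paper's route is shorter once Lemma~\ref{lem:exchangeabilty} is in hand and yields the concrete interleaved scheduling as a byproduct.
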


\begin{proof}
As $L^{(k-\mathbf{seq})}$ is everywhere finite by the assumption, then by Lemma \ref{lem:exchangeabilty} it is enough to show that there is at least one proper $k$-scheduling $S$ for which $L^{(S)}=L^{(k-\mathbf{seq})}$. This is the content of the next lemma.
\end{proof}

\begin{lemma}\label{le:sequencial}
        Let $a\in A_{k}$ be a $k$-transient arrow environment. Then there is a proper $k$-scheduling $S$ satisfying the equality $L^{(S)}(x)= L^{(k-\mathbf{seq})}(x)$ for all $x\in\Z$. 
\end{lemma}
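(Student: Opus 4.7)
The plan is to construct an explicit proper $k$-scheduling $S$ in which the trajectory of walker $j$ in $S$ coincides with its sequential trajectory, that is, the walk $Y^{(j)}=(Y^{(j)}_t)_{t\ge 0}$ on the $(j-1)$-leftover environment $a_{j-1}$, for every $j=1,\ldots,k$. Once this matching is established, walker $j$ contributes exactly $L^{(j)}$ to the total local time in $S$, hence $L^{(S)}(x)=\sum_{j=1}^{k} L^{(j)}(x)=L^{(k-\mathbf{seq})}(x)$ for every $x\in\Z$.

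The scheduling is built in phases; the intuition is to keep walker $j$ sufficiently ahead of walker $j+1$ so that, by the time walker $j+1$ acts, walker $j$ has already completed all its visits to any vertex walker $j+1$ is about to touch. For each $j$ and $x$, let $\ell_j(x):=\max\{t\ge 0:Y^{(j)}_t=x\}$, with the convention $\ell_j(x)=-1$ when $x$ is not visited; each $\ell_j(x)$ is finite by $k$-transience. Set $M_0^{(j)}:=0$ for every $j$. For $N\ge 1$, first let $M_N^{(k)}:=\max(M_{N-1}^{(k)}+1,N)$, and then, descending from $j=k-1$ down to $j=1$,
$$M_N^{(j)}:=\max\Bigl(M_{N-1}^{(j)}+1,\ \max\{\ell_j(Y^{(i)}_t)+1:\ i>j,\ 0\le t<M_N^{(i)}\}\Bigr).$$
Each of these maxima is over a finite set, so $M_N^{(j)}$ is well-defined and strictly increasing in $N$. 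The scheduling $S$ then performs, in phase $N$, walker $1$'s moves $M_{N-1}^{(1)}+1,\ldots,M_N^{(1)}$, followed by the corresponding block for walker $2$, and so on up to walker $k$. Properness is immediate from the strict monotonicity, since each walker executes at least one new move in every phase and $M_N^{(j)}\to\infty$.

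Trajectory matching is proved by induction on the scheduling time. Suppose that all past moves in $S$ agree with the sequential trajectories, and consider walker $j$'s $m$-th move in $S$, executed during its phase-$N$ sub-block. Set $y:=Y^{(j)}_{m-1}$. Matching reduces to two properties of the total local time at $y$ accumulated in $S$: (A) walker $i<j$ has already contributed exactly $L^{(i)}(y)$ departures from $y$; (B) walker $i>j$ has contributed $0$ departures from $y$. Condition (A) is built into the construction, because by the time walker $j$ acts within phase $N$, each walker $i<j$ has already executed $M_N^{(i)}>\ell_i(y)$ moves, so it has completed all its visits to $y$. Condition (B) is subtler: it cannot be enforced by the sizes $M_N^{(j)}$ themselves, since walker $i>j$'s initial moves might intersect $y$ for any choice of these sizes. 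Instead, (B) follows by self-consistency from the induction hypothesis: if walker $i>j$ had already visited $y$ at some earlier scheduling time $t^{\ast}$, then walker $i$'s own matching requirement at $t^{\ast}$ would force walker $j$ (as one of the indices below $i$) to have already contributed all $L^{(j)}(y)$ of its departures from $y$ by $t^{\ast}$. But the current move of walker $j$ is itself one of walker $j$'s visits to $y$, so, since walker $j$'s position at time $m-1$ in its sequential trajectory lies at $y$, walker $j$'s departure count from $y$ at $t^{\ast}<t$ is at most $m-1\le\ell_j(y)<L^{(j)}(y)$, a contradiction.

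The main obstacle is exactly this argument for (B): while (A) is engineered into the recursive definition of the $M_N^{(j)}$'s, (B) cannot be guaranteed by the construction alone and must be derived from the inductive matching of walker $i$'s earlier moves. With both (A) and (B) established, the induction closes, walker $j$'s trajectory in $S$ equals $(Y^{(j)}_t)_{t\ge 0}$ for every $j$, and therefore $L^{(S)}=L^{(k-\mathbf{seq})}$, proving the lemma.
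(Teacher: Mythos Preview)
Your approach is essentially the same as the paper's: build the scheduling in phases, moving walkers $1,2,\ldots,k$ in that order within each phase, with walker $j$ advanced far enough to have completed all its visits to the positions that later walkers will occupy in that phase. The paper uses cruder exit-time functions $g_j$ (the first time walker $j$'s sequential trajectory permanently leaves an interval) in place of your last-visit-time functions $\ell_j$, and then dispatches the verification with ``a simple induction now shows that the $j$-th particle always moves in places that particles $1$ to $j-1$ will never visit again''; in particular the paper does not spell out the self-consistency argument for your condition~(B), so your write-up is in fact more careful.

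One genuine slip, though: the chain $m-1\le\ell_j(y)<L^{(j)}(y)$ at the end of your (B) argument compares a time index $\ell_j(y)$ with a visit count $L^{(j)}(y)$, and the last inequality is false in general (e.g.\ if walker $j$ visits $y$ at sequential times $0,5,10$ then $\ell_j(y)=10$ while $L^{(j)}(y)=3$). The bound ``departure count $\le m-1$'' is also too crude to conclude anything. What you actually need is that, by the induction hypothesis, walker $j$'s departures from $y$ up to scheduling time $t^\ast$ number exactly $|\{0\le s\le m'-1:Y^{(j)}_s=y\}|$ for some $m'\le m-1$, and this is strictly less than $L^{(j)}(y)$ because $Y^{(j)}_{m-1}=y$ contributes an additional visit not yet counted. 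With that correction the contradiction for (B) goes through and the proof is complete.
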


\begin{proof}
First, let us describe the idea of the proof informally. We want each walker to walk ``as if" the walkers before it were sent to infinity, thus we want the arrow environment the $j$-th walker sees at
each step to be the same as that in $a_{j-1}$. To this end, before each move of the $k$-th walker, we first move the first walker a very large number of steps so the leftovers look like $a_1$.
We then move the second walker some large number of steps to turn into $a_2$ and so on while making sure it walks always still on an environment that looks like $a_1$.
We repeat this for $k-1$ walkers until we are sure the $k$-th particle sees at its position the same environment as in $a_{k-1}$. We then make a single move with the $k$-th particle and repeat the whole process.
We now give an explicit construction:
    Since $a\in A_{k-1}$, we can define $g_i(x)$ for each $i=1,...,k-1$, to be the minimal $s$ so that for all $t\ge s$ the walk $X^{(i)}_t$ in the arrow environment $a_{i-1}$ satisfies
    $|X^{(i)}_t|> x$, and let $g_k(y):= y$ be the identity function.
    Set $f_i(\cdot) := g_{i}\circ g_{i+1}\circ\ldots\circ g_{k}(\cdot)$ for $i=1,\ldots,k$.
    We shall use the $f_i$ to define the requested proper $k$-scheduling $S$. We build the sequence $S$ in consecutive blocks $(B_n)_{n\ge 1}$ in the following way:
    The first block $B_1$ will consist of $f_1(1)$ $1$'s followed by $f_2(1)$ $2$'s and so on until it ends with $f_k(1)$ $k$'s.
    The blocks $B_n$ for $n\geq 2$ are given by a sequence of $f_1(n)-f_1(n-1)$ $1$'s followed by $f_2(n) - f_2(n-1)$ $2$'s and so on until ending with $f_k(n)-f_k(n-1)$ $k$'s.
    Note that in our construction $f_k(n)=n$ so each block ends with a single instance of $k$.
    It is clear from the definition that $S$ is indeed a proper $k$-scheduling. A simple induction now shows that the $j$-th particle always moves in places that particles $1$ to $j-1$ will never visit again, and therefore it sees the same arrows as in $a_{j-1}$.
\end{proof}

Theorem \ref{thm:exchangeability} and Remark \ref{rem:koneforallallforone} give us the following characterizations of $k$-transience.
\begin{remark}
Fix an arrow environment $a\in A$.
The following are equivalent:
\begin{enumerate}
\item $a$ is $k$-transient.
\item There is some proper $k$-scheduling $S$ and some $x\in \Z$ such that the asymptotic local time of the $S$-mob walk is finite at $x$.
\item For all proper $k$-scheduling $S$ and all $x\in \Z$ the asymptotic local time of the $S$-mob walk is finite at $x$.
\end{enumerate}
 \end{remark}
\subsection{The minimum walk}

We saw above that given $a\in A_k$, if one wishes to consider the environment $a_k$ then it is enough to consider the asymptotic local time of \emph{some} proper $k$-scheduling. We next present a canonical way to produce such a $k$-scheduling.

\begin{defi}
Given an arrow environment $a\in A_{k-1}$, define \emph{the minimal $k$-scheduling} $S^{(\min)}:\N\to\{1,\dots,k\}$ defined inductively together with the corresponding $S^{(\min)}$-mob walk $X$ by: $(X^{(1)}_0,...,X^{(k)}_0)=(0,...,0)$ and
\begin{equation}\label{eq:minimumScheduling}
S^{(\min)}(t)=\min\arg\min\{X^{(1)}_t,...,X^{(k)}_t\}.
\end{equation}
(I.e., the walk is defined by moving at each step the leftmost particle, breaking ties using the order of the particles.)
\end{defi}

We will call the $S^{(\min)}$-mob walk \emph{the minimum walk} on $k$ particles, or the $k$-minimum mob walk, and denote it sometimes by $X^{(\min)}$. We denote by $X_t=\min_{1\leq j\leq k}X^{(j)}_t$ the position of the leftmost particle under the minimum scheduling at time $t$, and will often think of $X=(X_t)_{t\ge0}$ as a nearest neighbor walk on $\Z$ which may also stay in its place. $X$ will be therefore called the $k$-minimum walk.

\begin{remark}\label{rem:Minproperties}
If $a\in A$ then by Remark \ref{rem:koneforallallforone} we get that the $k$-minimum walk satisfies \linebreak $\liminf _{n\to\infty}X_n,\limsup _{n\to\infty}X_n\in\{-\infty,+\infty\}$.
Observe that by the definition of the $k$-minimum walk, if $X_n=r$ for some $n$, then at most one particle can be to the left of $r$ at any time $m>n$.
In fact, at any given time, $k-1$ of the particles are always partitioned between the rightmost point of the the $k$-minimum mob walk reached so far and the point to its right, and every time a particle makes a jump to the left, it will continue to be the ``active" particle until it either returns to the pack, or drifts away forever. Since the non-degeneracy condition on the arrow environment $a\in A$ ensures that a particle cannot remain caught in a finite interval, if the particle does not return to the pack after going leftward, it must drift to $-\infty$.
It follows that $S^{(\min)}$ is a proper $k$-scheduling if and only if $X_n \nrightarrow -\infty$.
\end{remark}

\subsection{Transience versus Recurrence}\label{subsec:deterministicTransienceVsRecurrence}

\begin{def}\label{def:kRightTrans}
An arrow environment $a\in A$ is called \emph{$k$-right transient} if $a_{i-1}$ are transient to the right, $1\le i\le k$ (i.e., the walks $(X^{(i)}_t)_{t\ge 0}$ on the environment $a_{i-1}$ satisfy $\lim _{t\to \infty} X^{(i)}_t=+\infty$, $1\le i\le k$).
\end{def}

The Exchangeability Theorem \ref{thm:exchangeability}, together with Remark \ref{rem:Minproperties} on the $k$-minimum walk, gives the following immediate corollary:
\begin{cor}\label{cor:seqRightTranEquivMinRightTan}
An arrow environment $a\in A$ is $k$-right transient if and only if the $k$-minimum walk $X=(X_t)_{t\ge0}$ on $a$ is transient to the right.
\end{cor}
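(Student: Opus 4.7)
My plan is to use the local-time comparison from Lemma~\ref{lem:exchangeabilty} and Remark~\ref{rem:exchangeability}(3) to transfer information between the minimum scheduling and various other (possibly non-proper) $k$-schedulings.

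For the forward direction, assume $a$ is $k$-right transient. Then $a\in A_k$ and each of the $k$ walkers in the sequential construction goes to $+\infty$, so $L^{(k-\mathbf{seq})}$ is finite everywhere and vanishes at every sufficiently negative site. Pick a proper $k$-scheduling realizing $L^{(k-\mathbf{seq})}$ (Lemma~\ref{le:sequencial}); then Lemma~\ref{lem:exchangeabilty} (if $S^{(\min)}$ happens to be proper) or Remark~\ref{rem:exchangeability}(3) (otherwise) gives $L^{(S^{(\min)})}\le L^{(k-\mathbf{seq})}$ pointwise. Hence $L^{(S^{(\min)})}$ is finite everywhere and zero at every very negative site. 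This rules out $X_t\to -\infty$, which would force $L^{(S^{(\min)})}(y)\ge 1$ for all sufficiently negative $y$; it also rules out the oscillation case $\liminf X_t=-\infty$ and $\limsup X_t=+\infty$, since $(X_t)$ takes only lazy nearest-neighbor steps on $\Z$ and would then cross $0$ infinitely often, giving $L^{(S^{(\min)})}(0)=\infty$. By Remark~\ref{rem:Minproperties} the only remaining case is $X_t\to +\infty$.

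For the reverse direction, suppose $X_t\to +\infty$; then every particle in the $k$-minimum mob walk escapes to $+\infty$, $L^{(S^{(\min)})}$ is finite everywhere and zero at every sufficiently negative site, and $S^{(\min)}$ is proper. I prove by induction on $1\le j\le k$ that $a\in A_j$ and $a_0,\ldots,a_{j-1}$ are each right transient. The base case $j=1$ uses the non-proper $k$-scheduling $S_1$ that only ever moves walker $1$: by Remark~\ref{rem:exchangeability}(3), $L^{(S_1)}\le L^{(S^{(\min)})}$, and since $L^{(S_1)}$ is precisely the local time of walker $1$ alone on $a$, that walk has finite asymptotic local time and visits no very negative site. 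By the single-walker dichotomy (Remark~\ref{rem:IfForOnLIsInfThenForAllLIsInf} together with non-degeneracy of $a$) it escapes to $+\infty$, yielding $a\in A_1$ and right transience of $a_0=a$.

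For the inductive step, assume $a\in A_j$ with $a_0,\ldots,a_{j-1}$ right transient. Since each $a_i$ with $i<j$ is right transient, the construction from the proof of Lemma~\ref{le:sequencial} applies to $j+1$ walkers (define $g_1,\ldots,g_j$ using the induction hypothesis and set $g_{j+1}$ to be the identity), yielding a proper $(j+1)$-scheduling under which the $i$-th walker for $i=1,\ldots,j+1$ traces out exactly a single walk on $a_{i-1}$. Padding by fixing the remaining $k-j-1$ walkers at the origin produces a (non-proper) $k$-scheduling; Remark~\ref{rem:exchangeability}(3) bounds its total local time by $L^{(S^{(\min)})}$, so the isolated walker on $a_j$ has finite asymptotic local time and visits no very negative site, and as in the base case it escapes to $+\infty$. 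Thus $a_j$ is right transient and $a\in A_{j+1}$, and the induction closes at $j=k$. The main subtlety is the induction itself: Lemma~\ref{le:sequencial}'s construction at level $j+1$ needs right transience of the earlier leftover environments to define $g_1,\ldots,g_j$, which is exactly what the induction hypothesis supplies; without this one cannot localize the $(j+1)$-st walker onto $a_j$.
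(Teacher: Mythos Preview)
Your proof is correct and uses the same toolkit as the paper (Lemma~\ref{lem:exchangeabilty}, Remark~\ref{rem:exchangeability}\eqref{rem:nonproper}, and the sequential construction of Lemma~\ref{le:sequencial}). The forward direction is essentially identical to the paper's ``other implication'': bound $L^{(S^{(\min)})}$ above by $L^{(k-\mathbf{seq})}$ and read off right transience of the minimum walk.

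The reverse direction is organised differently. The paper argues in one stroke: once $X_t\to+\infty$, the minimum scheduling is proper with everywhere-finite local time, so the Exchangeability Theorem~\ref{thm:exchangeability} (via the remark immediately following it, which yields $a\in A_k$) gives $L^{(k-\mathbf{seq})}=L^{(S^{(\min)})}$; since the latter vanishes for all sufficiently negative sites, each sequential walker is right transient. You instead run an explicit induction on $j$, at each step padding a proper $(j{+}1)$-scheduling (built from Lemma~\ref{le:sequencial} using only the right transience of $a_0,\ldots,a_{j-1}$) to a non-proper $k$-scheduling and invoking Remark~\ref{rem:exchangeability}\eqref{rem:nonproper}. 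This buys you a self-contained argument that does not appeal to Theorem~\ref{thm:exchangeability} or its subsequent remark, and it makes transparent the point you flag at the end: the construction of Lemma~\ref{le:sequencial} at level $j{+}1$ only needs $a\in A_j$, which the induction supplies. The paper's route is shorter but leans on the characterisation of $k$-transience stated after Theorem~\ref{thm:exchangeability}; yours is longer but avoids that dependency.
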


\begin{proof}
If the $k$-minimum walk is transient to the right, then in particular $S^{(\min)}$ is a proper $k$-scheduling and hence by the Exchangeability Theorem \ref{thm:exchangeability} the local time of the $k$-sequential walk is finite everywhere and identically zero for all negative small enough $x$. Thus $a$ is $k$-right transient. For the other implication, Remark \ref{rem:exchangeability} \eqref{rem:nonproper} together with Theorem \ref{thm:exchangeability} tells us that the local time of the $k$-minimum walk is bounded from above by the local time of the $k$-sequential walk, which is everywhere finite and equals zero for all negative small enough $y$. In particular, the $k$-minimum walk is transient to the right.
\end{proof}

Modifying Kosygina and Zerner's argument in Section 3 of \cite{kosygina2008positively} we get a condition for $k$-right transience of $a$, whom we shall discuss now.
Associate to each arrow environment $a\in A$ two deterministic processes. The first one, $X=(X_t)_{t\ge 0}$ is the $k$-minimum walk defined in the paragraph before Remark \ref{rem:Minproperties}. The other one, $z=(z_n)_{n\ge 0}$, is given by:

\begin{equation}\label{eq:defOfz}
z_0=k;\text{ and } z_{n+1} = \max\left\{0,\inf \{ t : \sum_{i=1}^t (1-a(n,i))= z_n-(k-1) \} - (z_n-(k-1))\right\}.
\end{equation}
(Here, as a convention, $\sum_{i=1}^{-t}(\cdots)\equiv 0$ for $t\ge 0$.)

Note that $z_{n+1}$ is defined to be the number of $1$'s in $a(n,\cdot)=(a(n,1),a(n,2),\dots)$ before there are $(z_n-(k-1))$ $0$'s in this sequence if $z_n\geq k$, and zero otherwise. As we will show below, the process $z$ includes all the information regarding the return of at least one particle of the $k$-min mob walk to the origin (more accurately, the information regarding the hitting time at $-1$). Let
\begin{equation}\label{eq:defOfTminusOne}
t_{-1}:=\inf\{t\ge 0:X_t=-1\}
\end{equation}
to be the hitting time of the $k$-minimum walk at $-1$.
Define
\[ w_0=k; \text{ and } w_n=\# \{t<t_{-1}: X_t=n-1 \text{ but } X_{t+1}\neq n-2\},\text{   } n\ge 0.\]

We first observe some simple but useful properties of the $k$-minimum walk $x$.
Define $M:=\sup \{ X_t: t < t_{-1}\}\in\N\cup\{\infty\}$.

\begin{lemma}\label{lem:ComparingCrossings} Fix $a\in A$. The following hold for all $0\le n\le M$.
\begin{enumerate}
\item \label{c:1}If $t_{-1}=\infty$ then $w_n=k\ + \text{ total left crossings of }(n,n-1)\text{ by the $k$-min mob walk}.$ \label{eq:ComparingCrossingsTranCase}
\item \label{c:2} If $\ t_{-1}<\infty$ then $w_n = k-1\ + \text{ the number of left crossings of } (n,n-1) \text{ by the $k$-min mob walk}$ up to and including time $t_{-1}$. Moreover, in this case $w_{M+1}\le k-1$. \label{eq:ComparingCrossingsRecCase}
\end{enumerate}
\end{lemma}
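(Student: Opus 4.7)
My first step is to reinterpret $w_n$ as the number of right-crossings of the edge $\{n-1,n\}$ by the $k$-minimum mob walk (before time $t_{-1}$ in case (2), and for all time in case (1)). Under the minimum scheduling, the unique particle that moves at step $t$ sits at $X_t$; when $X_t=n-1$ this particle either moves left to $n-2$ (in which case $X_{t+1}=n-2$ since no particle can be strictly less than $n-2$) or right to $n$ (in which case $X_{t+1}\ne n-2$, regardless of whether other particles are simultaneously at $n-1$). Hence $w_n=R_n$, where $R_n$ denotes the corresponding right-crossing count. Combined with the standard conservation identity $R_n-L_n=P_n(t)-P_n(0)$, where $L_n$ is the number of left-crossings and $P_n(t):=\#\{j:X^{(j)}_t\ge n\}$, this reduces both parts of the lemma to counting how many particles lie at positions $\ge n$ at the relevant stopping time.

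In case (1), the assumption $t_{-1}=\infty$ together with $a\in A$ and Remark \ref{rem:Minproperties} forces $X_t\to +\infty$, so every one of the $k$ particles drifts to $+\infty$, giving $P_n(\infty)=k$. Since $P_n(0)=0$ for $n\ge 1$, this yields $w_n=k+L_n$; for $n=0$ the assertion $w_0=k$ is already built into the definition and matches $L_0=0$ as the walk never visits $-1$.

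For case (2) the central step is the claim $P_n(t_{-1})=k-1$ for $1\le n\le M$. Let $\tau$ be the first time $X_\tau=M$. By the ``at most one particle to the left of $X_s$ at any later time'' property recorded in Remark \ref{rem:Minproperties}, at every time $m>\tau$ at most one particle is at a position $<M$. At time $t_{-1}$ exactly one particle has just arrived at $-1$ (only one particle moves per step, and $t_{-1}$ is the first visit to $-1$), so this is the unique particle at a position $<M$, and the remaining $k-1$ particles lie at positions $\ge M\ge n$. Combined with $L_0(t_{-1})=1$ (the single crossing $0\to -1$ at step $t_{-1}-1$), this gives $w_n=(k-1)+L_n(t_{-1})$ for all $0\le n\le M$.

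For the moreover part, $X_t\le M$ for every $t<t_{-1}$ means that no particle located at $M+1$ is ever the minimum at a time $<t_{-1}$, so there are no left-crossings of $\{M,M+1\}$ before $t_{-1}$, i.e.\ $L_{M+1}(t_{-1})=0$. Hence $w_{M+1}=R_{M+1}(t_{-1})=P_{M+1}(t_{-1})\le k-1$, the last inequality holding because at time $t_{-1}$ at least one particle (the one at $-1$) fails to be at a position $\ge M+1$. The only genuine difficulty is the structural fact ``$k-1$ particles at positions $\ge M$ at time $t_{-1}$''; once this monotonicity property of the $k$-minimum scheduling from Remark \ref{rem:Minproperties} is invoked cleanly, everything else is elementary walk-counting.
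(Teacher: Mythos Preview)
Your proof is correct and follows essentially the same approach as the paper's. Both arguments identify $w_n$ (for $n\ge 1$) with the number of right-crossings of the edge $\{n-1,n\}$ by the mob walk, then appeal to the conservation identity ``right-crossings minus left-crossings equals net change in the number of particles to the right of the edge'', and finally invoke the structural fact from Remark~\ref{rem:Minproperties} that at time $t_{-1}$ exactly one particle lies strictly to the left of $M$. Your formalization via $P_n(t)$ is slightly more explicit than the paper's phrasing, but the content is the same.
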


\begin{proof}
 We first show \eqref{c:1}. For $n=0$ this is trivial since the edge $(0,-1)$ was never crossed and $w_0=k$ by definition. Therefore we may assume $n>0$.
 Since $a\in A$ and $t_{-1}=\infty$, Remark \ref{rem:koneforallallforone} implies that all local times are finite. Moreover, as the local time at $-1$ is zero, then $X_t\to\infty$. In particular there is some time $N$ such that $X_t > n$ for all $t>N$, so the edge $(n-1,n)$ is never crossed (in either direction) after time $N$, and thus $w_n$ is finite.
 For any $t>N$ we have that all $k$ particles are to the right of the edge $(n-1,n)$. Since all $k$ started to the left of $(n-1,n)$ it follows that each particle had one more right-crossing of that edge then left-crossings. Summing over all particles finishes the proof.
To see \eqref{c:2} we note that at time $t_{-1}$ all particles but one are either at $M$ or $M+1$ (only one particle may move left of $M$ at any time - see remark \ref{rem:Minproperties}). Thus at time $t_{-1}$, $k-1$ particles are to the right of the edge $(n-1,n)$ and one (the one that hit $-1$) is to the left of it.
The proof follows as in clause \eqref{c:1}.
\end{proof}

\begin{lemma}\label{lem:comparingZandW}
For all $n\ge 0$, the following hold.
\begin{enumerate}
\item If $t_{-1}<\infty$ then $z_n =w_n$. \label{ZEqualsRightCrossings}
\item If $t_{-1}=\infty$ then $z_n \geq w_n$. \label{ZisGraterThanRightCrossings}
\end{enumerate}
\end{lemma}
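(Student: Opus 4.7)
The plan is to prove both (1) and (2) simultaneously by induction on $n$. The base case $n=0$ is immediate: $z_0=w_0=k$ by definition. For the inductive step, the main observation is that the $k$-minimum walk reads arrows at each site $n$ in the prescribed order $a(n,1),a(n,2),\dots$, so by time $T:=t_{-1}$ in case (1), or $T:=\infty$ in case (2), it has read a prefix of $a(n,\cdot)$ of length $L^{(\min)}_{T}(n)$. Denote by $\ell$ and $r$ the number of $-1$'s and $+1$'s in this prefix; unpacking the definition of $w_{n+1}$ shows $w_{n+1}=r$, since each step contributing to $w_{n+1}$ is precisely a step at which the leftmost particle is at $n$ and reads a $+1$.

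With this dictionary in place, Lemma \ref{lem:ComparingCrossings} converts the inductive hypothesis on $w_n$ into a count of $-1$'s in the prefix at site $n$. In case (1) it gives $\ell=w_n-(k-1)$, which equals $z_n-(k-1)$ by hypothesis; in case (2) it gives $\ell=w_n-k\le z_n-k<z_n-(k-1)$. Case (2) is already settled: the $(z_n-(k-1))$-th $-1$ of $a(n,\cdot)$ lies strictly beyond the prefix, so the range over which $z_{n+1}$ is computed strictly contains the prefix, giving $z_{n+1}\ge r=w_{n+1}$. In case (1) the prefix contains \emph{exactly} $z_n-(k-1)$ minus-ones, and to conclude $z_{n+1}=w_{n+1}$ I would need the prefix to terminate at its last $-1$, so that the $(z_n-(k-1))$-th $-1$ of $a(n,\cdot)$ is the final arrow read. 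In other words, the only remaining task is to show that in case (1), whenever some arrow at $n$ has been read before $t_{-1}$, the last such arrow is a $-1$.

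The main obstacle is precisely this ``last-arrow-is-$-1$'' claim; it is the only step that uses the dynamics rather than pure bookkeeping. I would prove it by contradiction: suppose the last arrow read at $n$ before $t_{-1}$, at some time $\tau$, is a $+1$, so the scheduled particle moves from $n$ to $n+1$. No other particle can be at $n$ at time $\tau$, since otherwise it would be leftmost at time $\tau+1$ and itself read a subsequent arrow at $n$. Therefore $X_{\tau+1}\ge n+1$, but $X_t$ changes by at most one per step and $X_{t_{-1}}=-1$, so the minimum walk must hit $n$ at some intermediate $s^{\ast}\in(\tau,t_{-1}]$, forcing yet another arrow to be read there and contradicting the choice of $\tau$. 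The boundary case $n+1=M+1$ in case (1) is handled separately without appealing to this claim: the additional part of Lemma \ref{lem:ComparingCrossings}(2) gives $w_{M+1}\le k-1$, so $z_{M+1}=w_{M+1}<k$, which in turn forces $z_{M+2}=0=w_{M+2}$ by definition, and the induction continues trivially for all larger $n$.
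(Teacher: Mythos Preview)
Your approach is exactly the paper's: induction on $n$ using Lemma \ref{lem:ComparingCrossings} to translate $w_n$ into a count of $-1$'s in the prefix of $a(n,\cdot)$, plus the observation that in case (1) the last arrow read at $n$ is a $-1$. Your contradiction argument for the latter is a clean expansion of what the paper phrases as ``the last crossing of the edge $(n,n+1)$ before $t_{-1}$ is a left crossing.''

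There is one slip in your final paragraph. Your own ``last-arrow-is-$-1$'' argument applies to every $n$ at which some arrow is read before $t_{-1}$, and this includes $n=M$: if the last arrow at $M$ were a $+1$ you would get $X_{\tau+1}\ge M+1$, which already contradicts the definition of $M$ without needing a later return. Hence your induction delivers $z_{M+1}=w_{M+1}$ directly. The step that genuinely needs separate treatment is $n=M+1$ (and beyond), where no arrows are read. As written, your sentence ``so $z_{M+1}=w_{M+1}<k$'' is circular: you have announced you are not appealing to the claim at that step, yet $z_{M+1}=w_{M+1}$ is precisely what the inductive step is supposed to establish, and it does not follow from $w_{M+1}\le k-1$ alone. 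Shift your boundary discussion by one: once $z_{M+1}=w_{M+1}\le k-1$ is in hand (via the claim at $n=M$), your argument that $z_{M+2}=0=w_{M+2}$ and the triviality for larger $n$ goes through as stated.
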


\begin{proof}
Assume first that $t_{-1}<\infty$. We will prove by induction on $n<M$ that $z_n = w_n$. For $n=0$ we have $z_0=k=w_0$. Assume now that $z_n=w_n$. Since $t_{-1}<\infty$, then the last crossing of $x$ before time $t_{-1}$ of the undirected edge $(n,n+1)$ is a left crossing, and therefore $a(n,L_{t_{-1}}(n))=0$. This implies that
the number of $0$'s in $\{a(n,1),...,a(n,L_{t_{-1}}(n))\}$ equals the total number of left crossings of $(n,n-1)$ before time $t_{-1}$. Note that by Lemma \ref{lem:ComparingCrossings} the last quantity equals $w_n-(k-1)$. As $z_n=w_n$, then $z_n-(k-1)$ is the number of zeros in $\{a(n,1),...,a(n,L_{t_{-1}}(n))\}$.
Now, $w_{n+1}=\# \{t<t_{-1}: X_t=n \text{ but } X_{t+1}\neq n-1\}$, which is the number of $1$'s in $a(n,\cdot)$ before the last visit of $x$ there, that is the number of ones in $\{a(n,1),...,a(n,L_{t_{-1}}(n))\}$. The latter is exactly the number of $1$'s before $z_n-(k-1)$ $0$'s in $a(n,\cdot)$, which is $z_{n+1}$ by definition.

Consider now the case $t_{-1}=\infty$. Again, we will prove by induction on $n<\infty$ that $z_n \geq w_n$. For $n=0$ we have $z_0=k=w_0$. Assume by induction that $z_n\ge w_n$. As $a\in A$ the process $x$ is transient, so every vertex (and edge) has a finite last time when it was visited by the walk $x$. By Lemma \ref{lem:ComparingCrossings} $w_n$ equals $k-1$ plus the number of $0$'s in $\{a(n,1),...,a(n,L_\infty(n))\}$. Using the definition of $z_{n+1}$ and the induction hypothesis it follows that $z_{n+1}$ is greater than or equal to the number of $1$'s in $\{a(n,1),...,a(n,L_\infty(n))\}$, which is $w_{n+1}$ by definition.
\end{proof}

As a result, we get the next theorem.
\begin{theorem}\label{thm:tzn}
$t_{-1}<\infty$ if and only if $z_n=0$ for some $n$.
\end{theorem}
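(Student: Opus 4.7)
The plan is to prove the two implications separately, using Lemmas \ref{lem:ComparingCrossings} and \ref{lem:comparingZandW} together with the ``pack'' structure of the $k$-minimum walk from Remark \ref{rem:Minproperties}.

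For the reverse direction I argue the contrapositive. Suppose $t_{-1}=\infty$. Then the asymptotic local time of the $k$-minimum walk at $-1$ equals $0$, hence is finite, and by Remark \ref{rem:koneforallallforone} all asymptotic local times are finite. In particular $X_t$ must tend to $+\infty$ or $-\infty$, and the $-\infty$ alternative is ruled out by the assumption $t_{-1}=\infty$, so $X_t\to +\infty$. Lemma \ref{lem:ComparingCrossings}(\ref{c:1}) then gives $w_n\ge k$ for every $n\ge 0$, and Lemma \ref{lem:comparingZandW}(\ref{ZisGraterThanRightCrossings}) yields $z_n\ge w_n\ge k>0$ for all $n$.

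For the forward direction assume $t_{-1}<\infty$ and set $M:=\sup\{X_t : t<t_{-1}\}$, which is finite. Let $V_M$ be the number of visits of $X$ to $M$ strictly before $t_{-1}$, and split $V_M=L_M+R_M$ into left ($0$) and right ($1$) arrows used at those visits; equivalently $L_M$ is the number of left crossings of $(M,M-1)$ by $X$ before $t_{-1}$. Since $X$ descends from $M\ge 0$ to $-1$ we have $L_M\ge 1$, so by Lemma \ref{lem:ComparingCrossings}(\ref{c:2}) together with Lemma \ref{lem:comparingZandW}(\ref{ZEqualsRightCrossings}),
\[
z_M \;=\; w_M \;=\; k-1+L_M \;\ge\; k.
\]
Applying the recursion \eqref{eq:defOfz}, $z_{M+1}$ equals the number of $1$'s in $a(M,\cdot)$ appearing before the $L_M$-th $0$. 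Since the first $V_M$ entries of $a(M,\cdot)$ already contain $L_M$ zeros, the index $\tau$ of the $L_M$-th $0$ satisfies $\tau\le V_M$, and hence the number of $1$'s among $a(M,1),\ldots,a(M,\tau)$ is at most $R_M$. The pack structure of the minimum walk in Remark \ref{rem:Minproperties} shows that each right arrow used at $M$ sends a distinct particle to $M+1$, where it is frozen for the remainder of the time before $t_{-1}$ because $X$ never exceeds $M$; since only $k-1$ particles can sit away from the leftmost position, we deduce $R_M\le k-1$ (this is exactly the ``moreover'' clause of Lemma \ref{lem:ComparingCrossings}(\ref{c:2}), noting that $w_{M+1}=R_M$). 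Combining, $z_{M+1}\le k-1<k$, so the ``zero otherwise'' branch of the recursion \eqref{eq:defOfz} forces $z_{M+2}=0$.

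The only substantive obstacle is the bound $R_M\le k-1$, which is essentially the ``moreover'' statement of Lemma \ref{lem:ComparingCrossings}(\ref{c:2}); once this is in hand, all remaining steps are direct manipulations of the recursion \eqref{eq:defOfz} paired with the comparison lemmas already at our disposal.
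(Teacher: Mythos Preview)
Your proof is correct and follows the same route as the paper: for the reverse direction you use Lemma~\ref{lem:ComparingCrossings}(\ref{c:1}) together with Lemma~\ref{lem:comparingZandW}(\ref{ZisGraterThanRightCrossings}) exactly as in the paper (phrased contrapositively), and for the forward direction you obtain $z_{M+1}\le k-1$ and hence $z_{M+2}=0$, which is precisely the paper's argument. The only difference is cosmetic: the paper invokes Lemma~\ref{lem:comparingZandW}(\ref{ZEqualsRightCrossings}) directly at index $M+1$ together with the ``moreover'' clause of Lemma~\ref{lem:ComparingCrossings}(\ref{c:2}) to get $z_{M+1}=w_{M+1}\le k-1$, whereas you unfold that inductive step by hand (computing $z_{M+1}$ from $z_M$ via \eqref{eq:defOfz} and re-deriving $R_M=w_{M+1}\le k-1$ from the pack structure), which is harmless but redundant given what you already have available.
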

\begin{proof}
If $t_{-1}<\infty$, then by Lemma \ref{lem:comparingZandW} together with the ``moreover" part of Lemma \ref{lem:ComparingCrossings}.\ref{eq:ComparingCrossingsRecCase} we get that $z_{M+1}=w_{M+1}\le k-1$ and hence $z_{M+2}=0$.

On the other hand, if $z_n=0$ then by Lemma \ref{lem:comparingZandW}.\ref{ZisGraterThanRightCrossings} we have that $0\le w_n\le z_n =0$. This shows that $t_{-1}<\infty$ since otherwise it would contradict the fact that by Lemma \ref{lem:ComparingCrossings}.\ref{eq:ComparingCrossingsTranCase}, $w_n\ge k$ holds.
\end{proof}

\section{Probabilistic Perspective - Cookies}\label{se:ppcokies}
Every cookie environment $\om$ may be considered naturally as a product measure over the space of arrow environments. Recall that given an arrow environment, the behavior of the processes we study is deterministic, thus the $k$-minimum walk $X_n$ (with given initial condition), the process $z_n$ and the stopping times $t_{\pm 1}$ are deterministic functions of the arrow environment.
It will therefore be convenient to regard the quenched measure $\QuenchedCookies$ as a measure on arrow environments (from which all the above quantities are derived) and to consider the annealed measure on arrow environments $\mathbb{P}$. It will sometime be convenient to allow the walkers not to start at $0$ but rather at some other point on $\Z$. We denote by $\mathbb{P}_m$ the same annealed measure on arrow environments with all the walkers initially positioned at $m$.

The annealed measure $\mathbb{P}$ over arrow environments inherits many of the properties of the measure $P$ on cookie environments.
It is straightforward that if $P$ is i.i.d.\ then so is $\mathbb{P}$. This also holds for (SE):
\begin{lemma}\label{le:arrows ergodic}
If the measure $P$ on cookie environments is stationary and ergodic, then so is $\mathbb{P}$
\end{lemma}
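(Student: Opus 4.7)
The approach is to verify stationarity directly, and then establish ergodicity via the standard $L^2$-correlation criterion, checked on the dense subspace of cylinder functions.

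For stationarity, I would observe that given $\om$, the measure $\mathbb{P}_\om$ is a product of Bernoullis with parameters $\om(x,n)$. Hence the pushforward of $\mathbb{P}_\om$ under the shift $T$ on arrow environments (defined by $(Ta)(x,n)=a(x+1,n)$) equals $\mathbb{P}_{\si\om}$, where $\si$ denotes the shift on cookie environments. Combining this with stationarity of $P$,
$$\mathbb{P}\circ T^{-1}=\int P(d\om)\,\mathbb{P}_{\si\om}=\int P(d\om)\,\mathbb{P}_\om=\mathbb{P}.$$

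For ergodicity, I would use the characterization that $(\{-1,1\}^{\Z\times\N},\mathbb{P},T)$ is ergodic if and only if
$$\frac{1}{N}\sum_{n=0}^{N-1}\mathbb{E}_{\mathbb{P}}\bigl[f\cdot(g\circ T^n)\bigr]\longrightarrow \mathbb{E}_{\mathbb{P}}[f]\cdot\mathbb{E}_{\mathbb{P}}[g]\quad\text{for all }f,g\in L^2(\mathbb{P}).$$
By an $L^2$ density argument, using Cauchy--Schwarz to uniformly bound the Cesàro operators, it suffices to check this convergence for $f,g$ depending on only finitely many arrow coordinates. For such cylinder $f,g$, and for $n$ large enough that the supports of $f$ and $g\circ T^n$ are disjoint subsets of $\Z\times\N$, the product structure of the fibre measure $\mathbb{P}_\om$ forces the factorization
$$\mathbb{E}_{\mathbb{P}_\om}\bigl[f\cdot (g\circ T^n)\bigr]=F(\om)\,G(\si^n\om),$$
where $F(\om):=\mathbb{E}_{\mathbb{P}_\om}[f]$ and $G(\om):=\mathbb{E}_{\mathbb{P}_\om}[g]$ are bounded measurable functions on $\Om$, and where I used the identification $\mathbb{E}_{\mathbb{P}_\om}[g\circ T^n]=G(\si^n\om)$ from the stationarity computation. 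Integrating in $P$ then gives, for all large $n$,
$$\mathbb{E}_{\mathbb{P}}\bigl[f\cdot (g\circ T^n)\bigr]=\mathbb{E}_P\bigl[F\cdot(G\circ\si^n)\bigr],$$
and von Neumann's mean ergodic theorem applied to the ergodic system $(\Om,P,\si)$ delivers the required Cesàro limit $\mathbb{E}_P[F]\,\mathbb{E}_P[G]=\mathbb{E}_{\mathbb{P}}[f]\,\mathbb{E}_{\mathbb{P}}[g]$. The finitely many initial $n$ for which the supports might overlap are uniformly bounded and contribute $o(1)$ to the Cesàro average, so they do not affect the limit.

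There is no genuine obstacle here: the whole argument rests on the disjoint-support factorization on product fibres, which converts ergodicity on arrow environments into the already-assumed ergodicity on cookie environments. The main care-points are the $L^2$ density step and the handling of the initial Cesàro terms, both of which are routine.
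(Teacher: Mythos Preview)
Your proof is correct, but the paper takes a different and shorter route. The paper realizes the arrow environment as a measurable factor of the product of the cookie environment with an i.i.d.\ array of uniform $[0,1]$ random variables (by comparing each uniform to the corresponding cookie bias). Since an i.i.d.\ array is mixing, and the product of an ergodic system with a mixing system is ergodic, the product is ergodic; then any factor of an ergodic system is ergodic, and the lemma follows in one line.

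Your argument instead verifies the $L^2$ Ces\`aro correlation criterion directly on cylinder functions, using the product structure of the fibre measures $\mathbb{P}_\om$ to factorize correlations once the cylinder supports are disjoint, and then invoking ergodicity of $P$ via von Neumann. This is essentially an unpacking, in this concrete setting, of the general fact ``ergodic $\times$ mixing is ergodic'' that the paper quotes as a black box. The paper's route is more conceptual and buys generality (the same one-liner applies to any factor of an ergodic-times-mixing product); your route is more self-contained and avoids appealing to that joining fact, at the cost of a few more lines of bookkeeping.
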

\begin{proof}
Stationarity is straightforward.
To get ergodicity, note that one can derive the arrow environment by attaching an independent uniform $[0,1]$ random variable to each cookie, and comparing it to the bias of the cookie.
Therefore the arrow environment is a factor of the product of the environment $e$ and an i.i.d.\ collection of uniform $[0,1]$ random variables. Since an i.i.d.\ collection is mixing, the product is ergodic and therefore the annealed measure on the space of arrow environments, which is a factor of the product is also ergodic.
\end{proof}

Other properties of the measure $P$ also translate directly to properties of $\mathbb{P}$. In particular, by Borel-Cantelli's lemma $\QuenchedCookies(a\in A)=1$ if and only if  $\QuenchedCookies$ satisfies (ND).

\subsection*{Zero-one Laws and the proof of Theorem \ref{thm:TransienceThreshold}}
The remainder of this section is dedicated to proving 0-1 laws for directional transience and recurrence of the walker on $j$-left over cookie environment. As a corollary we will derive Theorem \ref{thm:TransienceThreshold}. Notice that we assume that the probability over the cookie environments satisfies (SE), (WEL) and (ND). For clarity, we shall mention in each statement below which assumptions on the environments are required.

Throughout this section we fix $k$ to be the number of particles, $X$ is assumed to be the (now random) $k$-minimum walk, and for $m\in\Z$ we let $T_m$ be the random hitting time of $m$ by the walk $X$. Recall that the (random) process $Z_n$ is defined to have initial value $Z_0=k$.

\begin{lemma}\label{lem:T-1iffZ} For every probability measure over cookie environments satisfying (SE) and (ND) the following holds: For almost every $\om\in\Om$, $\QuenchedWalks(T_{-1}=\infty ) > 0$ if and only if $\QuenchedCookies(Z_n>0 \text{ for all } n) > 0$
\end{lemma}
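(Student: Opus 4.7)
The plan is to observe that this lemma is essentially a probabilistic restatement of the deterministic Theorem \ref{thm:tzn}. Once we adopt the arrow-environment viewpoint, both the hitting time $T_{-1}$ of the $k$-minimum walk and the sequence $(Z_n)$ become \emph{deterministic} functions of the arrow environment $a$ drawn from $\QuenchedCookies$. So the two events $\{T_{-1}=\infty\}$ and $\{Z_n>0 \text{ for all } n\}$ live on the same probability space, namely arrow environments under the quenched law, and it suffices to show that these events coincide up to a $\QuenchedCookies$-null set.

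First I would argue that, under (ND), for $P$-almost every $\om$ the quenched law $\QuenchedCookies$ is concentrated on the set $A$ of non-degenerate arrow environments. Indeed, (ND) gives $\sum_i \om(x,i)=\sum_i(1-\om(x,i))=\infty$ for every $x$, so for each fixed $x$ a conditional Borel--Cantelli argument ensures that, $\QuenchedCookies$-a.s., the sequence $a(x,\cdot)$ contains infinitely many $+1$'s and infinitely many $-1$'s, hence changes sign infinitely often. A union bound over the countable set $x\in\Z$ yields $\QuenchedCookies(a\in A)=1$ for $P$-a.e.\ $\om$.

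Next, Theorem \ref{thm:tzn} tells us that for every $a\in A$ the deterministic events $\{t_{-1}(a)=\infty\}$ and $\{z_n(a)>0\text{ for all }n\}$ coincide. Combined with the previous step, this gives, for $P$-a.e.\ $\om$,
\[
\QuenchedWalks(T_{-1}=\infty)
= \QuenchedCookies(T_{-1}=\infty,\,a\in A)
= \QuenchedCookies(Z_n>0\ \forall n,\,a\in A)
= \QuenchedCookies(Z_n>0\ \forall n),
\]
so that the two probabilities are in fact equal, and hence simultaneously positive or zero.

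There is no real obstacle here: all the combinatorial work was already carried out in the proofs of Lemmas \ref{lem:ComparingCrossings}--\ref{lem:comparingZandW} and Theorem \ref{thm:tzn}, and what remains is only to transfer the deterministic equivalence to the quenched measure using (ND). Note in particular that this argument yields the stronger equality of the two probabilities rather than merely the ``positive iff positive'' statement, and that the hypothesis (SE) plays no role in the proof — it is presumably retained here only as a standing assumption of the section.
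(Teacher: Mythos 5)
Your proposal is correct and follows exactly the route the paper takes: the paper's proof is the one-line observation that the lemma ``follows directly from the deterministic case, Theorem \ref{thm:tzn},'' and your write-up simply fills in the details (the (ND)$\Rightarrow\QuenchedCookies(a\in A)=1$ step is itself noted earlier in the same section of the paper). Your added remarks that the two probabilities are in fact equal and that (SE) is not needed are accurate.
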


\begin{proof}
This follows directly from the deterministic case, Theorem \ref{thm:tzn}.
\end{proof}

A \emph{right-excursion} of a walk $x$ is a sequence of moves $X_{\tau_0},\ldots, X_{\tau_1}\le \infty$ such that $X_{\tau_0}=0$, either $X_{\tau_1}=0$ or $\tau_1=\infty$, and $X_t>0$ for all $\tau_0<t<\tau_1$.
 Call $m\geq 0$ an \textbf{optional regeneration position} for an arrow environment $a$ if the $k$-minimum walk $X_n$, with all particles started at $m$ never hits $m-1$, that is if $t_{m-1}=\infty$.
 We call $m\geq 0$ a \textbf{regeneration position} if in addition, when starting the particles from $0$, the $k$-minimum walk $x$ reaches $m$ after some finite time.
 Note that in the $k$-minimum walk, no particle will move from position $m$ until all $k$ particles reach position $m$ (that is until the $k$-minimum walk $X_n$ reaches $m$), which means that the arrow environment on $[m,\infty)$ remains unchanged until all particles reach $m$ (if they ever do). It follows that if $m$ is an (optional) regeneration position, and the $k$-minimum walk $X_n$ reaches $m$, then it will afterwards never return to $m-1$.

 \begin{lemma}\label{le:inf_opt_reg} Let $P$ be a probability measure over cookie environments satisfying (SE).
 If $\mathbb{P}_0(T_{-1}=\infty)>0$ then there are a.s.\ infinitely many optional regeneration positions.
 \end{lemma}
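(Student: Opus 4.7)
The plan is to exploit the stationarity and ergodicity of the annealed arrow measure $\mathbb{P}$, inherited from $P$ via Lemma~\ref{le:arrows ergodic}, together with Birkhoff's pointwise ergodic theorem. For each $m\geq 0$, let $E_m$ denote the event that $m$ is an optional regeneration position, i.e.\ that starting $k$ particles at $m$, the $k$-minimum walk on the arrow environment never visits $m-1$.

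The key observation is that $E_m$ is a shift of $E_0$. Indeed, prior to any particle reaching $m-1$, the $k$-minimum walk started at $m$ is driven only by arrows at sites $\geq m$, and on $E_m$ no particle ever reaches $m-1$. Hence $E_m$ is measurable with respect to the arrows at positions $\geq m$, and coincides with $\theta^{-m}(E_0)$, where $\theta$ denotes the unit right shift acting on arrow environments.

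By stationarity, $\mathbb{P}(E_m) = \mathbb{P}(E_0) = \mathbb{P}_0(T_{-1}=\infty) > 0$, the last inequality being the hypothesis. Applying Birkhoff's pointwise ergodic theorem to the indicator of $E_0$ (ergodicity of $\mathbb{P}$ again from Lemma~\ref{le:arrows ergodic}) yields
\[
\frac{1}{N}\sum_{m=0}^{N-1} \mathbf{1}_{E_m} \xrightarrow[N\to\infty]{} \mathbb{P}(E_0) > 0 \quad \mathbb{P}\text{-a.s.},
\]
which forces $\mathbf{1}_{E_m}=1$ for infinitely many $m\geq 0$, proving the lemma.

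The main obstacle I foresee is verifying carefully the shift identity $E_m = \theta^{-m}(E_0)$, together with the measurability claim that the $k$-minimum walk started at $m$, on the event that it stays in $[m,\infty)$, depends only on the arrows at positions $\geq m$. This is where the structure of the minimum scheduling and Remark~\ref{rem:Minproperties} come in: the minimum walk never consumes an arrow at a site it will revisit from the right, so on $E_m$ only shifted coordinates are used. Once shift-invariance is in place, the ergodic theorem delivers the conclusion directly, and no finer probabilistic input (e.g.\ explicit regeneration analysis) is required at this stage.
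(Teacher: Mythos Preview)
Your proposal is correct and follows essentially the same route as the paper: both use stationarity to see that each $m$ is an optional regeneration position with the same positive probability $p=\mathbb{P}_0(T_{-1}=\infty)$, and then invoke the ergodic theorem to conclude that such positions have density $p>0$ and hence occur infinitely often. Your write-up is in fact slightly more explicit than the paper's, spelling out the shift identity $E_m=\theta^{-m}E_0$ and the appeal to Lemma~\ref{le:arrows ergodic}; the ``obstacle'' you flag is not really delicate, since on $E_m$ the $k$-minimum walk started at $m$ trivially never reads an arrow at a site $<m$, so the event is manifestly a function of $(a(x,\cdot))_{x\ge m}$.
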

 \begin{proof}
     Let $p:=\mathbb{P}_0(T_{-1}=\infty)>0$.
      By stationarity of the arrow environment, $\mathbb{P}_m(T_{m-1}= \infty) = p$ for any $m\geq 0$. By the ergodic theorem, we have that
    $$
    \frac{1}{n}\sum_{m=1}^n \mathbf{1}_{\{m \text{ is a optional regeneration position}\}} \to p \text{ a.s.}.
    $$
    In particular there are a.s.\ infinitely many optional regeneration positions.
 \end{proof}

 The following lemma is similar to a part of Lemma 8 of \cite{kosygina2008positively}, which was proved for $k=1$ in the (IID)\ case. See also \cite{amir2013zero} section 2.3.

\begin{lemma}\label{lem:TisInfiniteFinitelyManyExcursions} Let $P$ be a probability measure over cookie environments satisfying (SE) and (ND).
If $\mathbb{P}_0(T_{-1}=\infty)>0$ then there are a.s.\ only finitely many right excursions.
\end{lemma}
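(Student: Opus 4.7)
The plan is to combine Lemma~\ref{le:inf_opt_reg}, which under our hypothesis produces infinitely many optional regeneration positions a.s., with a purely deterministic structural fact about the $k$-minimum walk: once it first reaches any optional regeneration position $m>0$, it is trapped forever in $[m,\infty)$. The main obstacle is precisely this deterministic trapping step, since it is what promotes an optional regeneration position of the arrow environment into a true barrier for the walker started from $0$.

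The key deterministic claim is the following. For the $k$-minimum walk $X$ started with all $k$ particles at $0$, let $T_m:=\inf\{t:X_t=m\}$. I would show that on $\{T_m<\infty\}$, at time $T_m$ all $k$ particles sit at position $m$ and no arrow $a(n,\cdot)$ with $n\ge m$ has yet been consumed. For the position part I use the pack structure from Remark~\ref{rem:Minproperties}: just before time $T_m$ the running maximum of $X$ equals $m-1$, so the $k-1$ passive particles lie in $\{m-1,m\}$, and after the active particle moves from $m-1$ to $m$ the new minimum being $m$ forces every particle to be at $m$. For the arrow part, consuming an arrow at $n\ge m$ would require some particle at $n$ to be the leftmost, giving $X_t\ge n\ge m$ and contradicting $t<T_m$.

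Once this claim is in hand, trapping is immediate: if $m$ is an optional regeneration position and $T_m<\infty$, then from time $T_m$ onward the configuration (all $k$ particles at $m$, arrows on $[m,\infty)$ pristine) coincides with that of the $k$-minimum walk on $a$ started with every particle at $m$; by the very definition of an optional regeneration position, that walk never hits $m-1$, so $X_t\ge m$ for every $t\ge T_m$.

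To finish, I would use Lemma~\ref{le:inf_opt_reg} to obtain, almost surely, optional regeneration positions $0<m_1<m_2<\cdots$, and split into two cases. Either $X$ reaches some $m_i$, in which case the trapping step yields $X_t\ge m_i>0$ for all $t\ge T_{m_i}$, and so $0$ is visited only finitely often; or $X$ never reaches any $m_i$, so $\limsup_n X_n<\infty$, and then Remark~\ref{rem:Minproperties} (which rules out a finite $\limsup$) forces $X_t\to-\infty$, again leaving only finitely many visits to $0$. Since every right excursion begins at a visit to $0$, in either case only finitely many right excursions can occur.
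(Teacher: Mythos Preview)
Your argument is correct and follows the same route as the paper. The paper's proof is just a two-sentence version of yours: it splits on $\limsup X_n$ being finite or infinite, invokes Remark~\ref{rem:Minproperties} for the first case, and in the second case uses Lemma~\ref{le:inf_opt_reg} together with the trapping property of optional regeneration positions. The deterministic trapping claim you spell out (that at time $T_m$ all $k$ particles sit at $m$ and no arrow above $m$ has been consumed) is exactly what the paper asserts in the paragraph preceding the definition of optional regeneration position, where it notes that ``no particle will move from position $m$ until all $k$ particles reach position $m$\ldots which means that the arrow environment on $[m,\infty)$ remains unchanged until all particles reach $m$.'' You have simply supplied the details the paper left implicit.
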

\begin{proof}
On $\limsup X_n<\infty$, since $a\in A$ a.s., Remark \ref{rem:Minproperties} yields that $X_n\to -\infty$ and in particular the number of right excursions is a.s.\ finite. On $\limsup X_n=\infty$, since by Lemma \ref{le:inf_opt_reg} there a.s.\ exist (infinitely many) optional regeneration positions, the walker a.s.\ hits such a position $m$ and from that time on it will never return to $m-1$ let alone $0$, and thus there are only finitely many right excursions.
\end{proof}

In particular, we get the following corollary:

\begin{cor}\label{cor:CommingBack-k-walkIsTransient} Let $P$ be a probability measure over cookie environments satisfying (SE) and (ND).
If $\mathbb{P}_0(T_{-1}=\infty)>0$ then $\mathbb{P}_0(X_n=0\text{ i.o.})=0$.
\end{cor}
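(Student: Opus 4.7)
The plan is to derive the corollary directly from Lemma \ref{lem:TisInfiniteFinitelyManyExcursions} combined with the structural dichotomy in Remark \ref{rem:Minproperties}. Let $E := \{X_n = 0 \text{ i.o.}\}$, and suppose for contradiction that $\mathbb{P}_0(E) > 0$. First I would note that (ND) together with Borel--Cantelli gives $\mathbb{P}_0(a \in A) = 1$, so Remark \ref{rem:Minproperties} applies almost surely and yields
\[
\liminf_{n\to\infty} X_n,\ \limsup_{n\to\infty} X_n \in \{-\infty, +\infty\} \quad \mathbb{P}_0\text{-a.s.}
\]

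Next I would observe that on $E$ the walk visits $0$ infinitely often, which forces $\liminf X_n \le 0 \le \limsup X_n$. Combined with the dichotomy above, this pins down $\liminf X_n = -\infty$ and $\limsup X_n = +\infty$ almost surely on $E$. In particular, on $E$ the $k$-minimum walk attains values $\ge 1$ at infinitely many times.

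Then I would argue that these two properties together produce infinitely many right excursions. Indeed, each visit to $1$ must be preceded by a step from $0$ to $1$ (since $|X_{n+1} - X_n| \le 1$), and such a step is by definition the start of a right excursion. If only finitely many right excursions were initiated, then after some finite time the walk would make no further $0 \to 1$ transitions, so any visit to $0$ after that time would be followed by a step to $-1$ (or to $0$ again), and the walk could never again reach any positive height; this contradicts $\limsup X_n = +\infty$ on $E$. Hence $E$ is contained (up to a null set) in the event of infinitely many right excursions.

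Finally, Lemma \ref{lem:TisInfiniteFinitelyManyExcursions} asserts that under the hypothesis $\mathbb{P}_0(T_{-1} = \infty) > 0$ there are almost surely only finitely many right excursions. Thus $\mathbb{P}_0(E) = 0$, which is the claim. I do not foresee a real obstacle: the only slightly delicate point is verifying the ``infinitely many right excursions'' step, and that is a direct consequence of the nearest-neighbour (possibly lazy) structure of the minimum walk together with the two properties $X_n = 0$ i.o.\ and $\limsup X_n = +\infty$ on $E$.
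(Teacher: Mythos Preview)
Your proof is correct and follows exactly the route the paper intends: the corollary is stated immediately after Lemma~\ref{lem:TisInfiniteFinitelyManyExcursions} with no explicit proof, the phrase ``In particular'' signalling that it is a direct consequence. Your argument spells out the implicit details---using Remark~\ref{rem:Minproperties} to force $\limsup X_n = +\infty$ on $E$ and then deducing infinitely many right excursions from the nearest-neighbour structure---which is precisely the reasoning the paper suppresses.
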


Also the following lemma is almost identical to another part of Lemma 8 of \cite{kosygina2008positively}. The proof we shall present here is a variant a finite modification argument used in \cite{kosygina2012excited} under the assumption (ELL), which extends also to the (IID) and (WEL) case.

\begin{lemma}\label{lem:TisFiniteExcursionsAreFinite} Let $P$ be a probability measure over cookie environments satisfying either (ELL), or (WEL) and (IID).
If $\mathbb{P}_0(T_{-1}<\infty)=1$ then all right excursions of the $k$-minimum walk are $\mathbb{P}_0$-a.s.\ finite.
\end{lemma}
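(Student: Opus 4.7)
The plan is to argue by contradiction via a finite modification of the environment. By Remark~\ref{rem:Minproperties}, an infinite right excursion of the $k$-minimum walk is equivalent to $X_n\to+\infty$ (on $\{X_n\to+\infty\}$ the walk visits $0$ only finitely often, and by $\liminf,\limsup\in\{-\infty,+\infty\}$ its last departure from $0$ must go rightward), so I assume $\mathbb{P}_0(X_n\to+\infty)>0$. The goal is to derive $\mathbb{P}_0(T_{-1}=\infty)>0$, contradicting the hypothesis.

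On $\{X_n\to+\infty\}$ the walk visits $\{\ldots,-2,-1\}$ only finitely many times and the trajectory is bounded on any fixed time window, so by countable additivity I fix deterministic $T_0,M\in\mathbb{N}$, a particle configuration $\mathbf{p}\in[0,M]^k$, a local-time profile $\ell\colon[-M,M]\to\mathbb{N}$, and a specific finite arrow pattern $\beta$ on $[-M,M]$ realizing $\ell$, such that
\[
E_\beta \;=\; \{X_t\ge 0\ \forall\,t\ge T_0\}\cap\{X_n\to+\infty\}\cap\{\text{arrows used by time }T_0\text{ equal }\beta\}
\]
has positive probability and the walk is in configuration $\mathbf{p}$ at time $T_0$. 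Next I would construct a second finite arrow pattern $\tilde\beta$, supported only on positions in $[0,M']$ for some $M'\ge M$, that deterministically drives the $k$-mob walk from the origin to the same configuration $\mathbf{p}$ in finite time $\tilde T_0$ without ever visiting $-1$. A concrete recipe is to first push every particle rightward using $+1$ arrows at positions in $[0,M']$ and then zig-zag back into $\mathbf{p}$ with $\pm 1$ oscillations supported in $[0,M']$. Under (ELL) every finite arrow cylinder has positive probability because $\omega(x,i)\in(0,1)$ almost surely; under (IID) and (WEL), one first passes to the positive-probability event that every cookie stack at positions in $[0,M']$ is strictly positive, after which the same holds. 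In either case $\mathbb{P}(F_{\tilde\beta})>0$, where $F_{\tilde\beta}$ is the event that the arrows at $[0,M']$ realize $\tilde\beta$.

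The gluing step concludes the proof. Let $H'$ be the event that the walk continued from configuration $\mathbf{p}$ at time $\tilde T_0$, on the arrows not consumed by $\tilde\beta$, stays $\ge 0$ and tends to $+\infty$; by construction $H'$ depends only on arrows disjoint from those used by $\tilde\beta$, and in particular arrows at $[-M,-1]$ are irrelevant to $H'$. Under (IID), arrows at different $(x,i)$ are independent, so conditioning on $F_{\tilde\beta}$ does not alter the distribution of the arrows driving $H'$, and $\mathbb{P}(H')$ equals the analogous tail probability in the original decomposition $E_\beta=F_\beta\cap H$, which is positive. Hence $\mathbb{P}(F_{\tilde\beta}\cap H')=\mathbb{P}(F_{\tilde\beta})\,\mathbb{P}(H')>0$. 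Under (SE)+(ELL), arrows are conditionally independent given the cookie environment $\omega$, so
\[
\mathbb{P}(F_{\tilde\beta}\cap H')=\mathbb{E}\bigl[\mathbb{P}(F_{\tilde\beta}\mid\omega)\,\mathbb{P}(H'\mid\omega)\bigr],
\]
with $\mathbb{P}(F_{\tilde\beta}\mid\omega)>0$ almost surely by ellipticity, and $\mathbb{P}(H'\mid\omega)>0$ on a positive-$\omega$-measure set by a variant of absolute continuity across arrow-index shifts. On $F_{\tilde\beta}\cap H'$ the walk never visits $-1$ yet tends to $+\infty$, giving $\mathbb{P}_0(T_{-1}=\infty)>0$, the required contradiction.

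The main technical obstacle is two-fold: the combinatorial construction of $\tilde\beta$ attaining the prescribed configuration $\mathbf{p}$ without ever visiting $-1$ (possibly requiring $M'>M$ to admit enough auxiliary oscillations), and in the (SE)+(ELL) case the comparison of the tail probability in the modified scenario to that in the original, when the local-time profiles of $\beta$ and $\tilde\beta$ on $[0,M]$ need not coincide. This is the point at which the finite-modification technique of \cite{kosygina2012excited} is invoked: conditional independence of arrows given $\omega$ together with ellipticity transports positive probability of the tail event across the arrow-index shifts induced by the modification.
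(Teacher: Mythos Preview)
Your proposal follows the right intuition (a finite-modification argument) but the execution has a genuine gap, which you yourself flag in the final paragraph without resolving. The issue is the gluing step: having reached configuration $\mathbf{p}$ via the alternative pattern $\tilde\beta$, the continuation event $H'$ is a function of the arrows $a(x,i)$ with $i>\tilde\ell(x)$, whereas the original tail event $H$ is a function of the arrows with $i>\ell(x)$. Since $\tilde\ell\neq\ell$ in general, these are \emph{different} random variables, and under (IID) the arrows are independent across $(x,i)$ but not identically distributed in $i$ (the cookies $\omega(x,1),\omega(x,2),\ldots$ within a stack need not be i.i.d.). Hence your claim ``$\mathbb{P}(H')$ equals the analogous tail probability'' is unjustified, and nothing in the argument shows $\mathbb{P}(H')>0$. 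Invoking \cite{kosygina2012excited} does not close this: the finite-modification technique there succeeds precisely because the modification is arranged so that the tail event is measurable with respect to the \emph{unmodified} arrows, which your setup does not achieve.

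The paper's proof avoids this entirely by a much simpler device. Rather than trying to match a configuration, it fixes $n$ and defines $B(n)$ to be the set of arrow environments for which, \emph{after replacing the first $n$ arrows at $0$ by right arrows}, the $k$-minimum walk never hits $-1$. The key observation is that if the $m$-th right excursion is infinite, then the environment lies in $B(m+k-1)$: the first $m+k-1$ arrows at $0$ account for all visits to $0$ prior to that excursion, and forcing them to $+1$ only sends the mob rightward earlier. Crucially, $B(n)$ is by construction measurable with respect to the arrows \emph{other than} $a(0,1),\ldots,a(0,n)$, so under $\mathbb{P}_\omega$ it is independent of the event $C(n)=\{a(0,1)=\cdots=a(0,n)=+1\}$. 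Since $\mathbb{P}(C(n)\cap B(n))\le\mathbb{P}(T_{-1}=\infty)=0$, one gets $\mathbb{E}[\mathbb{P}_\omega(C(n))\,\mathbb{P}_\omega(B(n))]=0$; under (ELL) $\mathbb{P}_\omega(C(n))>0$ a.s., and under (IID)+(WEL) $\mathbb{P}(C(n))>0$ factors out, forcing $\mathbb{P}(B(n))=0$ in either case. No configuration-matching, no local-time comparison, and no shift of arrow indices is needed.
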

\begin{proof}
Fix $m>0$. We will show that the probability that the $m$-th right excursion is infinite is $0$. Let $B(m)$ be the set of all arrow environments such that, if the first $m$ arrows above $0$ are replaced by right arrows, the $k$-minimum walk on the modified arrow environment will never hit $-1$.
Note that an arrow environment on which the $m$-th right excursion  of the $k$-minimum walk is infinite is in $B(m+k-1)$. Let $n=m+k-1$, the lemma will follow once we show that $\PP(B(n))=0$.
Let $C(m)$ be the event that the first $m$ arrows at $0$ are all right arrows. By the assumption of the lemma, $\PP(C(n),B(n))=0$. To conclude we write
\begin{equation}
0=\PP(C(n),B(n))=
\E(\PP_\omega (C(n),B(n)))=
\E(\PP_\omega (C(n))\PP_\omega (B(n))).
\end{equation}
Under (IID) the last term equals $\PP (C(n)) \PP (B(n)))$. By (WEL) $\PP (C(n))>0$ which implies $\PP(B(n))=0$. Under (ELL) $\PP_\omega (B(n))>0$ for a.e.\ $\omega$, which implies that $\PP(B(n))=0$.
\end{proof}

\begin{cor}\label{cor:T.infinite.Vs.X.transient} Let $P$ be a probability measure over cookie environments satisfying (ND) and either (ELL), or (IID) and (WEL).
$\PP_0(T_{-1}=\infty ) > 0$ if and only if $\PP_0(X_n\to +\infty) > 0$
\end{cor}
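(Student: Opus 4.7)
The plan is to prove the two implications separately, both leveraging tools already assembled in this section: Remark \ref{rem:Minproperties} on the structure of the $k$-minimum walk, and Lemma \ref{lem:TisFiniteExcursionsAreFinite} on finiteness of right excursions. Note that in both regimes the hypotheses imply (ND), which via Borel--Cantelli gives $\PP_0(a\in A)=1$, so Remark \ref{rem:Minproperties} applies almost surely.

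For the forward direction I would actually prove the stronger inclusion $\{T_{-1}=\infty\}\subseteq\{X_n\to +\infty\}$ up to a null set. On the event $T_{-1}=\infty$ the walker stays in $\{0,1,2,\dots\}$, so $\liminf_n X_n\ge 0$. But Remark \ref{rem:Minproperties} tells us that for $a\in A$ we have $\liminf_n X_n\in\{-\infty,+\infty\}$, hence $\liminf_n X_n=+\infty$, i.e.\ $X_n\to+\infty$. The inequality $\PP_0(X_n\to+\infty)\ge \PP_0(T_{-1}=\infty)>0$ is then immediate.

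For the reverse direction I would argue by contrapositive: assume $\PP_0(T_{-1}<\infty)=1$, and show $\PP_0(X_n\to+\infty)=0$. Lemma \ref{lem:TisFiniteExcursionsAreFinite} applies (using the $(\mathrm{ELL})$ or the $(\mathrm{IID})+(\mathrm{WEL})$ hypothesis) and yields that every right excursion of $X$ is almost surely finite. Now on the event $\{X_n\to+\infty\}$, the walker visits $0$ only finitely many times; since $X$ is a nearest-neighbor walk (possibly standing still) by Remark \ref{rem:Minproperties}, it cannot cross from $\{X_t>0\}$ to $\{X_t<0\}$ without first touching $0$, so after the last visit to $0$ the walk stays strictly positive. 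That last visit is the starting point of an infinite right excursion, contradicting the lemma.

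The only (minor) care is to make ``the last visit to $0$'' rigorous without assuming a priori that it is a.s.\ finite. A clean way is to write
\[
\{X_n\to+\infty\}\ \subseteq\ \bigcup_{s=0}^{\infty}\bigl\{X_s=0,\ X_t>0\text{ for all }t>s\bigr\},
\]
a countable union of events each of which is contained in ``some right excursion starting at time $s$ is infinite''; by Lemma \ref{lem:TisFiniteExcursionsAreFinite} each such event has probability zero, so $\PP_0(X_n\to+\infty)=0$, completing the contrapositive and hence the corollary.
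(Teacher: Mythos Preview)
Your proof is correct and follows essentially the same route as the paper's: the inclusion $\{T_{-1}=\infty\}\subseteq\{X_n\to+\infty\}$ via $\liminf_n X_n\in\{-\infty,+\infty\}$ for $a\in A$ (the paper cites Remark \ref{rem:koneforallallforone} directly rather than Remark \ref{rem:Minproperties}, but these are equivalent here), and the contrapositive of the other direction via Lemma \ref{lem:TisFiniteExcursionsAreFinite}. Your countable-union treatment of the ``last visit to $0$'' is a harmless elaboration of what the paper states in one line, namely that all right excursions being a.s.\ finite immediately gives $X_n\nrightarrow+\infty$.
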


\begin{proof}
For the ``if" implication, note that for $a\in A$, if $t_{-1}=\infty$, then also $X_n\to\infty$ by Remark \ref{rem:koneforallallforone}. Since $P$ satisfies (ND) implies that $\PP(a\in A)=1$ we have $\PP_0(X_n\to +\infty)\ge \PP_0(T_{-1}=\infty)$.
For the ``only if" implication, assume $\mathbb{P}_0(T_{-1}<\infty ) = 1$, then Lemma \ref{lem:TisFiniteExcursionsAreFinite}, $\PP_0$-a.s.\ all right excursions are finite and in particular $\PP_0$-a.s.\ $X_n \nrightarrow +\infty$.
\end{proof}
The proof of the following lemma is similar to the one of Lemma \ref{lem:TisFiniteExcursionsAreFinite} and hence omitted.
\begin{lemma}\label{lem:FinitemodificationForminimumWalk}
Let $P$ be a probability measure over cookie environments satisfying (ND) and either (ELL), or (IID) and (WEL). If $\PP_{0}(\limsup_{n\to\infty}X_n^{(1)}=+\infty)=1$ then $\PP_{0}(\limsup_{n\to\infty}X_n^{(k-\min)}=+\infty)=1$. In particular, in this case the $k$-minimum walk defines a proper $k$-scheduling a.s.\
\end{lemma}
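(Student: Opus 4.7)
I would prove the contrapositive using a finite modification argument at the origin, paralleling the proof of Lemma~\ref{lem:TisFiniteExcursionsAreFinite}. Assume $\PP_{0}(\limsup_{n\to\infty}X_n^{(k-\min)} < +\infty) > 0$. By (ND), the set of non-degenerate arrow environments has full measure; on this set, Remark~\ref{rem:Minproperties} identifies $\{\limsup X^{(k-\min)} < +\infty\}$ with $\{X^{(k-\min)}_n \to -\infty\}$, and analogously identifies $\{\limsup X^{(1)}_n < +\infty\}$ with $\{X^{(1)}_n \to -\infty\}$. So it suffices to exhibit $\PP_{0}(X^{(1)}_n \to -\infty) > 0$. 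On $\{X^{(k-\min)}_n \to -\infty\}$, eventually no particle of the mob sits at $0$, so the local time $L^{(k-\min)}(0)$ is a.s.\ finite; by countable additivity there exists $m$ with $\PP_{0}(E_m) > 0$, where $E_m := \{X^{(k-\min)}_n \to -\infty\} \cap \{L^{(k-\min)}(0) \le m\}$.

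Set $n = m + k - 1$ and introduce two events: $C(n) := \{a(0,i) = -1 \text{ for every } 1 \le i \le n\}$, and $B(n) := \{\text{if the first } n \text{ arrows at } 0 \text{ are replaced by } -1 \text{ then the single walker } X^{(1)} \text{ on the modified environment satisfies } X^{(1)}_n \to -\infty\}$. Note that $B(n)$ depends only on arrows $a(x,j)$ with $(x,j)\notin\{(0,1),\dots,(0,n)\}$, while $C(n)$ depends only on $a(0,1),\dots,a(0,n)$; hence $C(n)$ and $B(n)$ are independent under the quenched measure $\PP_\omega$. The core combinatorial containment, analogous to ``the $m$-th right excursion is infinite $\subseteq B(m+k-1)$'' from the previous lemma, is $E_m \subseteq B(n)$. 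The idea is that forcing the first $n = m + k - 1$ arrows at $0$ to be $-1$ endows the single walker with a budget of leftward pushes at $0$ that is at least what the $k$-min walk needed collectively on $E_m$, with $k-1$ extra pushes compensating for the $k-1$ particles other than the ``active'' one initially at $0$. Since the unmodified arrows at negative positions are the same ones that transported every particle of the $k$-min walk to $-\infty$, the single walker on the modified environment is also carried to $-\infty$.

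Granted the containment, $\PP(B(n)) \ge \PP_0(E_m) > 0$. Writing $\PP(C(n), B(n)) = \EE[\PP_\omega(C(n))\PP_\omega(B(n))]$ by quenched independence, one concludes $\PP(C(n)\cap B(n)) > 0$ under either hypothesis: under (ELL), $\PP_\omega(C(n)) > 0$ for $P$-a.e.\ $\omega$, combined with $\PP(B(n)) > 0$; under (IID) together with (WEL), $\PP_\omega(C(n)) > 0$ on a set of positive $P$-probability, and the factorization $\EE[\PP_\omega(C(n))\PP_\omega(B(n))] = \PP(C(n))\PP(B(n))$ follows exactly as in the proof of Lemma~\ref{lem:TisFiniteExcursionsAreFinite}. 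On $C(n) \cap B(n)$ the modification is vacuous, so the single walker on the original environment satisfies $X^{(1)}_n \to -\infty$, giving $\PP_0(X^{(1)}_n \to -\infty) > 0$ and the desired contradiction. The ``in particular'' clause is immediate from Remark~\ref{rem:Minproperties}, since $\PP_{0}(\limsup X_n^{(k-\min)} = +\infty) = 1$ rules out $X_n^{(k-\min)} \to -\infty$ a.s., which is equivalent to $S^{(\min)}$ being a proper $k$-scheduling.

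The main obstacle is the combinatorial containment $E_m \subseteq B(n)$: proving that the $k$-min walk's escape to $-\infty$ using at most $m$ arrows at $0$ forces the single walker on the environment with $n = m+k-1$ left arrows substituted at $0$ to also escape to $-\infty$. The key observation is that on $E_m$ the arrows at negative positions carry the entire drift to $-\infty$, so once the modified single walker is pushed through $0$ onto the negative side within its budget of $n$ arrows at $0$, these same unmodified negative arrows drive it to $-\infty$; the surplus $k-1$ arrows are needed precisely because the $k$-min walk may have sent up to $k-1$ of the leftward pushes at $0$ to particles other than the one tracked by a single walker.
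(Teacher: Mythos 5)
Your proof is correct and is exactly the finite-modification argument the paper intends when it declares the proof ``similar to the one of Lemma~\ref{lem:TisFiniteExcursionsAreFinite} and hence omitted.'' The containment $E_m\subseteq B(n)$ that you flag as the main obstacle does hold, and the clean way to see it is via excursions into $(-\infty,-1]$: by Remark~\ref{rem:Minproperties} at most one particle of the mob is ever to the left of $0$, so the mob's successive excursions into the negative half-line are determined solely by the arrows at positions $\le -1$, read in order; on $E_m$ the mob makes at most $m$ such excursions, the last of which drifts to $-\infty$, and the modified single walker --- fed only left arrows at $0$ --- reproduces these same excursions verbatim until it is launched onto the infinite one (so in fact $n=m$ already suffices, and the extra $k-1$ is harmless). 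One small correction to your heuristic: on $E_m$ only the single active particle is transported to $-\infty$, while the other $k-1$ particles freeze at the running maximum of the minimum walk; the negative arrows carry one walker, not all $k$, to $-\infty$, which is all the single-walker conclusion needs.
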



By the Exchangeability Theorem \ref{thm:exchangeability}, we get the following corollary:

\begin{cor}\label{cor:notLeftTransientThenProperScheduling}
Let $P$ be a probability measure over cookie environments satisfying (ND) and either (ELL), or (IID) and (WEL).
If $\AnnealedWalks(\lim_{n\to\infty}X_n^{(i)}=+\infty,\,i=1,...,k)=1$ then $\AnnealedWalks(\limsup_{n\to\infty}X_n^{(k+1)}=+\infty)=1$
\end{cor}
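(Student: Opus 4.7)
The plan is to combine Lemma~\ref{lem:FinitemodificationForminimumWalk} with the Exchangeability Theorem~\ref{thm:exchangeability}, applied to the $(k+1)$-minimum walk. The hypothesis implies in particular $\limsup_n X_n^{(1)}=+\infty$ a.s., so Lemma~\ref{lem:FinitemodificationForminimumWalk} applied with $k+1$ particles in place of $k$ (its assumptions on $P$ coincide with those of the present corollary) yields that the $(k+1)$-minimum walk $Y$ satisfies $\limsup_n Y_n=+\infty$ a.s.\ and that the $(k+1)$-minimum scheduling is a.s.\ a proper $(k+1)$-scheduling.

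By Remark~\ref{rem:koneforallallforone} the asymptotic local time $L^{((k+1)-\min)}$ of the $(k+1)$-minimum walk is a.s.\ either finite everywhere or infinite everywhere, and I would split into these two cases. In the infinite case, the characterization of $(k+1)$-transience stated just after Theorem~\ref{thm:exchangeability} gives $a\notin A_{k+1}$ a.s., so the $(k+1)$-th sequential walker $X^{(k+1)}$, which walks on the leftover environment $a_k\in A$, has infinite asymptotic local time; by Remark~\ref{rem:IfForOnLIsInfThenForAllLIsInf} it then visits every vertex infinitely often, so $\limsup_n X_n^{(k+1)}=+\infty$.

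The real work is in the finite case, where $a\in A_{k+1}$ a.s., so $X^{(k+1)}$ is transient and the task is to exclude transience to $-\infty$. Applying the Exchangeability Theorem to the proper $k$- and $(k+1)$-minimum schedulings, together with \eqref{eq:Lkseq}, gives the pointwise identity
\[L^{(k+1)}(x)=L^{((k+1)-\min)}(x)-L^{(k-\min)}(x),\qquad x\in\Z.\]
Moreover, finiteness of $L^{((k+1)-\min)}$ forces $\liminf_n Y_n>-\infty$, which combined with Remark~\ref{rem:Minproperties} (and $\limsup_n Y_n=+\infty$) yields $Y_n\to+\infty$; since $Y_n=\min_i Y_n^{(i)}$, each individual particle satisfies $Y_n^{(i)}\to+\infty$, so the overall leftmost visited site $M^{\ast}=\min_i\inf_n Y_n^{(i)}$ is a.s.\ finite and $L^{((k+1)-\min)}(x)=0$ for every $x<M^{\ast}$. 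If by contradiction $X^{(k+1)}\to-\infty$, then $L^{(k+1)}(x)\ge 1$ for all $x\le -1$; combined with $L^{(k-\min)}(x)\ge 0$ and the display above this gives $L^{((k+1)-\min)}(x)\ge 1$ for all $x\le -1$, contradicting the vanishing of $L^{((k+1)-\min)}$ below $M^{\ast}\le 0$. Hence $X^{(k+1)}\to+\infty$ a.s.

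The step I expect to be the main obstacle is precisely the leftmost-dip argument in the finite case: one has to use that a proper minimum-scheduling with everywhere finite asymptotic local time forces \emph{all} of its particles to $+\infty$ (not merely the leftmost), in order to obtain the uniform bound $L^{((k+1)-\min)}(x)=0$ for $x$ sufficiently negative, and to confront this with the fact that a walker transient to $-\infty$ necessarily leaves positive local time at every negative site.
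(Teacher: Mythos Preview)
Your proof is correct and follows essentially the same approach the paper intends: the paper simply writes ``By the Exchangeability Theorem~\ref{thm:exchangeability}, we get the following corollary,'' leaving the details implicit, and you have spelled out precisely those details --- invoking Lemma~\ref{lem:FinitemodificationForminimumWalk} for the $(k+1)$-minimum walk, splitting on finiteness of its asymptotic local time, and in the finite case using $L^{((k+1)\text{-seq})}=L^{((k+1)\text{-min})}$ together with the vanishing of the latter below the leftmost visited site to rule out $X^{(k+1)}\to-\infty$. The self-identified ``main obstacle'' is indeed the crux, and your handling of it is sound.
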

%

\begin{prop}[Inductive directional dichotomy]\label{prop:Inductive directional dichotomy} Let $P$ be a probability measure over the space of cookie environments satisfying (ND) and either (SE) and (ELL), or (IID) and (WEL). Assume that $\mathbb{P} (a \text{ is $k$-right transient})=1$. Then $\mathbb{P}(a_{k} \text{ is recurrent} )=1$ or $\AnnealedCookies(a_{k}\text{ is transient to the right})=1$.
\end{prop}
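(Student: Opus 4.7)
The strategy is to reduce the statement to a zero-one law for the $(k+1)$-minimum walk $Y$ on the \emph{original} environment $a$, and then invoke the tools already assembled. By Corollary \ref{cor:seqRightTranEquivMinRightTan}, $\{Y_n\to+\infty\}$ coincides with $\{a \text{ is }(k+1)\text{-right transient}\}$; since $a$ is $k$-right transient a.s., this in turn coincides a.s.\ with $\{a_k\text{ is transient to }+\infty\}$. On the other hand, Corollary \ref{cor:notLeftTransientThenProperScheduling} gives $\limsup_n X^{(k+1)}_n=+\infty$ a.s., which rules out transience of $a_k$ to $-\infty$. Since (ND) is preserved by the leftover operation (the $k$-sequential walk removes only finitely many arrows at each site on the $k$-transience event), we have $a_k\in A$ a.s., so the walker on $a_k$ is a.s.\ either transient to $+\infty$ or recurrent. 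Thus it suffices to show $\PP(Y_n\to+\infty)\in\{0,1\}$.

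Since $a$ is $k$-right transient a.s., we have $\limsup_n X^{(1)}_n=+\infty$ a.s.; Lemma \ref{lem:FinitemodificationForminimumWalk}, applied with $k+1$ particles, then gives $\limsup_n Y_n=+\infty$ a.s. Let $T^Y_{-1}$ denote the hitting time of $-1$ by $Y$. We split according to the value of $\PP_0(T^Y_{-1}=\infty)$.

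If $\PP_0(T^Y_{-1}=\infty)>0$, then by Lemma \ref{lem:TisInfiniteFinitelyManyExcursions} the walk $Y$ has only finitely many right excursions a.s. Combined with $\limsup Y_n=+\infty$ a.s., one of these excursions must be infinite, forcing $Y_n$ to be eventually strictly positive; the dichotomy $\liminf Y_n\in\{-\infty,+\infty\}$ from Remark \ref{rem:Minproperties} then yields $\liminf Y_n=+\infty$, so $Y_n\to+\infty$ a.s. If instead $\PP_0(T^Y_{-1}=\infty)=0$, then Corollary \ref{cor:T.infinite.Vs.X.transient} (whose proof applies for any fixed number of particles, and in particular to $Y$) gives $\PP_0(Y_n\to+\infty)=0$. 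Either way $\PP(Y_n\to+\infty)\in\{0,1\}$, and the proposition follows. The main conceptual step is the reduction to the $(k+1)$-minimum walk on the original stationary environment; once this is in place, the preceding zero-one lemmas (which were formulated for an arbitrary fixed number of particles) apply almost verbatim, and no new probabilistic input is required.
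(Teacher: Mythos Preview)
Your proof is correct and follows essentially the same approach as the paper: both arguments pass to the $(k+1)$-minimum walk on the original environment, split on whether $T_{-1}=\infty$ with positive probability, and invoke the same circle of lemmas (Lemma~\ref{lem:FinitemodificationForminimumWalk}, Lemma~\ref{lem:TisInfiniteFinitelyManyExcursions}/Corollary~\ref{cor:CommingBack-k-walkIsTransient}, Corollary~\ref{cor:T.infinite.Vs.X.transient}, and Corollary~\ref{cor:notLeftTransientThenProperScheduling}). Your write-up front-loads the reduction to a zero--one statement for $\{Y_n\to+\infty\}$ and uses (ND)-preservation under the leftover operation to dichotomize $a_k$ upfront, whereas the paper handles the two cases directly via exchangeability between $X'$ and $X^{(k+1)}$; these are cosmetic differences rather than substantive ones.
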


\begin{proof}
 Let $X'$ be the $(k+1)$-minimum walk, and $T'_{-1}$ the hitting time of $-1$ by $X'$.
 First note by Lemma \ref{lem:FinitemodificationForminimumWalk} the $(k+1)$-min mob walk defines a $(k+1)$-scheduling. Also, Corollary \ref{cor:notLeftTransientThenProperScheduling} implies that $\AnnealedWalks(\limsup_{n\to\infty}X_n^{(k+1)}=+\infty)=1$.

If $\AnnealedWalks$-a.s.\ $T'_{-1}<\infty$ then by Corollary \ref{cor:T.infinite.Vs.X.transient} $\AnnealedWalks(\liminf_{n\to\infty}X'_n=-\infty)=1$, and by exchangeability it means that $\AnnealedWalks(\liminf_{n\to\infty}X^{(k+1)}_n=-\infty)=1$.
Hence $a_k$ is $\AnnealedCookies$-a.s.\ recurrent.

If $T'_{-1} = \infty$ with positive probability then Corollary \ref{cor:CommingBack-k-walkIsTransient} implies that $\AnnealedWalks$-a.s.\ $X'$ is not recurrent. By exchangeability it holds that $a_k$ is a.s.\ not recurrent. Since $\AnnealedWalks$-a.s.\ $\limsup_{n\to\infty}X_n^{(k+1)}=+\infty$ then the only possibility we have left with is that $a_k$ is $\AnnealedCookies$-a.s.\ transient to the right.
\end{proof}

\begin{remark}
Before going on to the proof of Theorem \ref{thm:TransienceThreshold}, let us note that while all the work we have done so far was with regard to right transience, similar analogous statements can be made with regard to left transience. There are two ways to do this. Either repeat all statements and proofs above, this time using left transience, the maximum walk (where we always move the rightmost particle), an analogous version of $Z_n$ on the left half line etc.., or alternatively one may define the reflected arrow environment $\bar a(y,i):=1-a(-y,i)$ and note that ``left"-properties of $a$ (like being transient to the left) are ``right"-properties of $\bar{a}$, then use the statements for right-transience on $\bar{a}$.
\end{remark}

For the proof of Theorem \ref{thm:TransienceThreshold} we shall use the following theorem, which was proved for (SE) and (ELL) in \cite{amir2013zero} and for (IID), (WEL) and (BD) in \cite{kosygina2008positively}.
\begin{theorem}[0-1 law for directional transience]\label{thm:ABO}
Let $P$ be a probability measure over the space of cookie environments satisfying either (SE) and (ELL), or (IID), (WEL) and (BD). Then $\AnnealedCookies(X \text{ is transient to the right})\in\{0,1\}$ and $\AnnealedCookies(X \text{ is transient to the left})\in\{0,1\}$.
\end{theorem}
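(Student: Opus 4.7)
The plan is to treat the two hypothesis sets separately. Let $A_+ = \{\lim_n X_n = +\infty\}$ and $A_- = \{\lim_n X_n = -\infty\}$; by symmetry it suffices to show $\PP(A_+) \in \{0,1\}$. Under (ND), every sample path is either transient to $\pm\infty$ or visits every vertex infinitely often, so $A_+$, $A_-$, and $\{X_n=0 \text{ i.o.}\}$ partition the sample space up to a null set.

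For (IID), (WEL), (BD), I would follow the argument of \cite{kosygina2008positively}. Suppose $\PP(A_+)>0$. The key observation enabled by (BD) and (IID) is that once the walk crosses a given edge $(m-1,m)$ for the last time from the left, the environment on $[m,\infty)$ that it subsequently explores is independent of its history. Using (WEL) one shows that at each such fresh site $m$ the walk has positive quenched probability of never returning to $m-1$; combined with (IID), this produces an infinite sequence of independent ``trial regenerations'', each succeeding with probability at least $\PP(A_+)$. A Borel--Cantelli argument then forces a success almost surely, giving $\PP(A_+)=1$. This is essentially the single-walker specialization of Lemmas \ref{lem:TisFiniteExcursionsAreFinite} and \ref{lem:TisInfiniteFinitelyManyExcursions}, recast as a tail argument on the i.i.d. product space of cookie stacks.

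For (SE), (ELL), I would adapt \cite{amir2013zero}. Two ingredients are needed. First, a quenched 0-1 law: for $P$-a.e.\ $\om$, $\PP_\om(A_+) \in \{0,1\}$. By L\'evy's martingale convergence, $\PP_\om(A_+ \mid \F_n) \to \mathbf{1}_{A_+}$ a.s.\ under $\PP_\om$. Under (ELL), from any state of the walk at time $n$ one has positive quenched probability, bounded below by a quantity depending only on $\om$ (via a finite-modification coupling), of producing any prescribed finite continuation; on $A_+$ this forces the conditional probability to stay bounded below, so it cannot tend to $0$ on $A_+$, which gives $\PP_\om(A_+) \in \{0,1\}$. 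Second, the event $\{\om : \PP_\om(A_+)=1\}$ is $P$-a.s.\ shift-invariant: shifting $\om$ by $1$ corresponds to starting the walk at position $1$ instead of $0$, and (ELL) provides a single-step coupling with positive probability taking $0$ to $1$, so the $\{0,1\}$-valued quenched transience probabilities under the two starting positions must agree. Ergodicity of $P$ then yields $P(\PP_\om(A_+)=1)\in\{0,1\}$, and integration gives $\PP(A_+) \in \{0,1\}$.

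The main obstacle is the (SE)+(ELL) case: without independence, Kolmogorov's tail 0-1 law is unavailable, and (ELL) must be used twice in essential ways --- once at the quenched level for the L\'evy-type dichotomy, and once at the annealed level to transfer the shift-invariance of $P$ to the event $\{\PP_\om(A_+)=1\}$, bridging the asymmetry caused by the fact that the walker's starting position is not translated when the environment is shifted.
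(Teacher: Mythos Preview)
The paper does not prove Theorem \ref{thm:ABO} at all: it is quoted as a known result, with the (IID), (WEL), (BD) case attributed to \cite{kosygina2008positively} and the (SE), (ELL) case to \cite{amir2013zero}. So there is no argument in the paper to compare your proposal against; you are sketching proofs of the cited external theorems.

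Your (IID), (WEL), (BD) sketch is in the right spirit and close to what \cite{kosygina2008positively} does.

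Your (SE), (ELL) sketch, however, has a real gap in the ``first ingredient''. The L\'evy convergence $\PP_\om(A_+\mid\F_n)\to\mathbf{1}_{A_+}$ holds automatically and by itself proves nothing about $\PP_\om(A_+)$. To extract $\PP_\om(A_+)\in\{0,1\}$ from it you would need that, whenever $\PP_\om(A_+)>0$, the conditional probabilities $\PP_\om(A_+\mid\F_n)$ stay bounded away from $0$ \emph{almost surely} (in particular on $A_+^c$). Your justification --- (ELL) lets the walk perform any prescribed finite continuation with positive probability --- does not give this: the lower bound depends on the full state (current position and all cookies consumed so far), and under mere (ELL) there is no uniform bound. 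On $A_+^c$ the walk may drift far left or deplete arbitrarily many cookies, and nothing you wrote prevents $\PP_\om(A_+\mid\F_n)\to 0$ along such histories. (Your phrasing ``cannot tend to $0$ on $A_+$'' suggests the logic got reversed: on $A_+$ the limit is $1$ trivially; the issue is $A_+^c$.) The argument in \cite{amir2013zero} does not proceed via a L\'evy-type quenched $0$--$1$ law; it works on the arrow-environment space (cf.\ Lemma \ref{le:arrows ergodic} here), uses the excursion/regeneration analysis whose $k$-particle version is Lemmas \ref{lem:TisInfiniteFinitelyManyExcursions}--\ref{lem:TisFiniteExcursionsAreFinite} and Corollary \ref{cor:CommingBack-k-walkIsTransient}, and applies ergodicity of the arrow measure directly rather than passing through a quenched dichotomy first.
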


\begin{proof}[Proof of Theorem \ref{thm:TransienceThreshold}]
If $a$ is recurrent with positive probability, then it follows from Theorem \ref{thm:ABO} that it is in fact a.s.\ recurrent. In particular, $a_k$ is a.s.\ recurrent for all $k$ by definition. In this case the theorem holds with $R=0$. Otherwise, by Theorem \ref{thm:ABO} we may assume \WLOG that $a$ is transient to the right a.s.\ Now, either for all $k$ it holds that $a_k$ is a.s.\ transient to the right, in which case we set $R=+\infty$, or, by By Proposition \ref{prop:Inductive directional dichotomy} there exists some $R<+\infty$ so that $a_k$ is transient to the right a.s.\ if $k\le R$ and recurrent a.s.\ otherwise. The case of left transience follows in an analogous manner by symmetry, with negative $R$.

\end{proof}

We remark that for the proof of Theorem \ref{thm:ABO} in the case of (IID) and (WEL) the assumption (BD) may relaxed to (ND) by noticing that the proof in \cite{amir2013zero} can be adapted using arguments similar to Lemma \ref{lem:TisFiniteExcursionsAreFinite}. This would imply Theorem \ref{thm:TransienceThreshold} holds for (IID), (WEL) and (ND) as well.

\section{Law of Large Numbers for the walkers in the leftover environments}\label{sec:LLN}
In this section we prove the law of large numbers for the walks on the leftover environments (Theorem \ref{thm:ZernerLLN})
To this end we need the notion of directional stationary ergodic environments. This property was defined by Zerner in \cite{zerner2005multi} (there it was used as the definition of being stationary ergodic).
\begin{defi}
A measure $P$ over cookie environments is called \emph{right stationary} (ergodic) if the distribution of $\{\om(x,\cdot)\}_{x\geq 0}$ is stationary (ergodic) with respect to the left shift $\theta^z$, $z\geq 0$, where $\theta^z(\om)(x)=\om(x+z)$. An analogues definition holds for left stationary and ergodic environments.
\end{defi}

Note that if $P$ is (SE) then it is stationary and ergodic in both directions (but not vice versa).

Remember that for an excited random walk $X$, the hitting time $T_x$ is defined by $T_x=\inf\{t\ge 0 : X_t=x\}\in\N\cup\{\infty\}$.
The proof of the law of large numbers in \cite{zerner2005multi} combined with the notation of directional transience gives the following formulation.

\begin{theorem}\label{thm:DLLN}[\cite{zerner2005multi}, Theorem 13]
Assume $P$ be a right stationary and ergodic measure over cookie environments, and assume that $T_x$ is a.s.\ finite for all $x\ge 0$.
Let $$u_+ = \sum_{j\ge 0} \PP (T_{j+1}-T_j>j).$$ Then $\PP$-a.s.\ $\limsup_{n\rightarrow\infty} \frac{X_n}{n} \le \frac{1}{u_+}$. If moreover $u_+<\infty$, then also $\PP$-a.s.\ $\liminf_{n\rightarrow\infty} \frac{X_n}{n} \ge \frac{1}{u_+}$. In particular in this case $\lim_{n\rightarrow\infty} \frac{X_n}{n} = \frac{1}{u_+}$

Symmetrically, whenever $P$ be a left stationary and ergodic measure over cookie environments, so that $T_x$ is a.s.\ finite for all $x\le 0$.
Let $$u_- = \sum_{j\le 0} \PP (T_{-(j+1)}-T_{-j}>j).$$ Then $\PP$-a.s.\ $\liminf_{n\rightarrow\infty} \frac{X_n}{n} \ge \frac{1}{u_-}$. If moreover $u_-<\infty$, then also $\PP$-a.s.\ $\limsup_{n\rightarrow\infty} \frac{X_n}{n} \le \frac{1}{u_-}$. In particular in this case $\lim_{n\rightarrow\infty} \frac{X_n}{n} = \frac{1}{u_-}$.
\end{theorem}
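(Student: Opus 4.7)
The result is cited from \cite[Theorem~13]{zerner2005multi}, so my plan follows the classical inversion-of-hitting-times scheme for one-dimensional walks, adapted to the directionally stationary ergodic setting. First, from $T_x<\infty$ $\PP$-a.s.\ for every $x\geq 0$ combined with (ND), $X_n\to+\infty$ $\PP$-a.s., and the nearest-neighbour sandwich $T_{X_n}\leq n<T_{X_n+1}$ gives
$$\frac{X_n}{T_{X_n+1}}\leq \frac{X_n}{n}\leq \frac{X_n}{T_{X_n}}.$$
The LLN for $X_n$ therefore reduces to a hitting-time LLN for $T_k/k$ as $k\to\infty$; proving only $\liminf T_k/k\geq u_+$ already yields $\limsup X_n/n\leq 1/u_+$ (with the convention $1/\infty=0$), which is all that is asserted without the $u_+<\infty$ assumption.

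I would then decompose $T_k=\sum_{j=0}^{k-1}\xi_j$ with $\xi_j:=T_{j+1}-T_j$, and apply Fubini on the tail sum:
$$T_k=\sum_{m\geq 0}\#\{0\leq j<k : \xi_j>m\}.$$
The main obstacle is that $(\xi_j)_{j\geq 0}$ is not itself stationary under $\PP$, because the pattern of cookie consumption at sites $<j$ at time $T_j$ depends on the full history of the walk. The key construction is a stationary surrogate $\tilde\xi_j$, defined as the hitting time of $1$ by a fresh walker on the shifted environment $\theta^j\om$ confined to $[0,\infty)$. Right-stationarity makes $(\tilde\xi_j)$ a stationary ergodic sequence (it is a deterministic functional of $\theta^j\om$), and the coupling arranges $\xi_j\geq\tilde\xi_j$ (the real walker incurs at least the surrogate's time, plus slack for any backtracks below $j$). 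Right-stationarity also identifies $\PP(\tilde\xi_0>m)$ with $\PP(T_{m+1}-T_m>m)$, which is what explains the curious diagonal indexing in the definition of $u_+$.

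For the lower bound $\liminf T_k/k\geq u_+$, I would apply the ergodic theorem to the truncated surrogate $\tilde\xi_j\wedge M$, let $M\to\infty$, use Fatou to pass to $\tilde\xi_j$, and sum in $m$ via Fubini to recover $u_+$. This handles $\limsup X_n/n\leq 1/u_+$ regardless of whether $u_+$ is finite. The matching upper bound $\limsup T_k/k\leq u_+$, needed only when $u_+<\infty$, is the main technical point: one must show that the backtracking slack $\xi_j-\tilde\xi_j$ contributes negligibly on average, using the uniform integrability supplied by $u_+<\infty$ to justify exchanging the Ces\`aro limit with the infinite sum in the Fubini decomposition. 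The left-transient statement follows by applying the right-transient statement to the reflected arrow environment $\bar a(y,i):=1-a(-y,i)$ introduced in the remark preceding Theorem \ref{thm:ABO}. The principal difficulty throughout is the non-stationarity of the natural increment sequence $(\xi_j)_j$, bridged by the shift-coupling in the one direction and by the finiteness of $u_+$ in the other.
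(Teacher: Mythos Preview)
The paper does not prove this statement: it is stated with attribution to \cite[Theorem~13]{zerner2005multi} and then used as a black box in the proof of the unnumbered LLN for the first walker and of Theorem~\ref{thm:ZernerLLN}. There is therefore no ``paper's own proof'' to compare your proposal against. You correctly note this at the outset of your proposal; your sketch of the hitting-time inversion and ergodic-theorem argument is a plausible outline of Zerner's original reasoning, but since the present paper simply quotes the result, any detailed assessment of your sketch (in particular the claimed identity $\PP(\tilde\xi_0>m)=\PP(T_{m+1}-T_m>m)$ and the precise coupling giving $\xi_j\ge\tilde\xi_j$) would have to be made against \cite{zerner2005multi} rather than against this paper.
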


We would like to point out that an immediate consequences of the last theorem is a law of large numbers for the first walker.

\begin{theorem}
Let $P$ be a probability measure over the space of cookie environments satisfying (SE). There is a constant $v$ such that $\frac{X^{(1)}_n}{n}\to v$ a.s.\
\end{theorem}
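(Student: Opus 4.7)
The plan is to derive the theorem directly from Theorem~\ref{thm:DLLN} applied in both its right- and left-directional formulations, which are both at our disposal because (SE) strictly implies both right- and left-stationary-ergodicity of $P$.

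I would set up the annealed events
$A_{+}=\{T_x<\infty\text{ for all }x\ge 0\}$ and
$A_{-}=\{T_x<\infty\text{ for all }x\le 0\}$,
together with the quantities $u_{+}, u_{-}$ from Theorem~\ref{thm:DLLN}, and perform a case analysis on the four regions they generate. On $A_{+}$, after a mild adaptation of the proof of Theorem~\ref{thm:DLLN} so that the hypothesis is read pointwise rather than as an unconditional a.s.\ statement, one obtains $X^{(1)}_n/n\to 1/u_{+}$ a.s., with the convention $1/\infty=0$. Symmetrically, on $A_{-}$, $X^{(1)}_n/n\to -1/u_{-}$ a.s. On the intersection $A_{+}\cap A_{-}$ both conclusions hold simultaneously; since the $j=0$ term in each of the sums defining $u_{\pm}$ equals $1$, one has $u_{\pm}\ge 1$, so $1/u_{+}\in(0,1]$ while $-1/u_{-}\in[-1,0)$, and the equality $1/u_{+}=-1/u_{-}$ is only possible if $u_{+}=u_{-}=\infty$, giving common limit $0$. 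On $(A_{+}\cup A_{-})^c$ the walk is trapped in a bounded interval, so $X^{(1)}_n/n\to 0$ trivially.

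This already shows that $X^{(1)}_n/n$ converges a.s.\ to a random variable $V$ taking values in $\{1/u_{+},\,0,\,-1/u_{-}\}$, so the only step left is to upgrade $V$ to a deterministic constant, and this is the main obstacle. Without the zero-one law for directional transience of Theorem~\ref{thm:ABO} (which needs (ELL) or (BD), not available here), one cannot directly exclude the scenario in which $A_{+}\setminus A_{-}$ and $A_{-}\setminus A_{+}$ both carry positive probability with incompatible nonzero limits $1/u_{+}>0$ and $-1/u_{-}<0$. I would close this gap with a soft ergodic-theoretic argument: Lemma~\ref{le:arrows ergodic} promotes (SE) to stationarity and ergodicity of the induced law on arrow environments, $V$ is a deterministic function of the arrow environment, and a standard regeneration/stationarity comparison (the limiting speed of the walk started at the origin agrees with that of the walk started at any fixed point, up to null sets) shows that $V$ is invariant under the shift. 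Ergodicity then forces $V$ to be almost surely constant, and this constant is the required $v$.
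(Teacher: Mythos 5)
Your overall strategy---applying Theorem \ref{thm:DLLN} in both the right- and left-directional forms and combining the two bounds---is the same as the paper's, but the way you set it up introduces two genuine gaps. First, Theorem \ref{thm:DLLN} is not a pointwise statement on your event $A_{+}$: its proof rests on the ergodic theorem applied to the sequence $(T_{j+1}-T_j)_{j\ge 0}$, which is stationary and ergodic under the \emph{unconditional} annealed measure. Conditioning on $A_{+}$ (which is not an invariant event for the relevant shift) destroys exactly the structure that proof uses, so the claimed ``mild adaptation'' reading the hypothesis pointwise is not available without real work. Second, and more seriously, your concluding step asserts that the random limit $V$ is shift-invariant because ``the limiting speed of the walk started at the origin agrees with that of the walk started at any fixed point, up to null sets.'' For excited random walks this is not a standard fact: the walk started at $1$ on $a$ and the walk started at $0$ on $a$ after it first reaches $1$ see \emph{different} environments (the latter has already consumed cookies at nonpositive sites), and identifying their asymptotic behaviour is precisely the kind of statement that elsewhere in the paper requires finite-modification or coupling arguments under (ELL) or (IID) and (WEL)---hypotheses you do not have under bare (SE). As written, the step that upgrades $V$ to a constant is unsupported, and it is the step you yourself identify as the main obstacle.

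The detour is also unnecessary, which is how the paper avoids the problem entirely. The quantities $u_{+}$ and $u_{-}$ are deterministic, and Theorem \ref{thm:DLLN} delivers the bounds $\limsup_{n} X^{(1)}_n/n \le 1/u_{+}$ and the symmetric lower bound on the $\liminf$ as unconditional almost sure statements. One then splits only on the deterministic alternatives: if $u_{+}<\infty$ (resp.\ $u_{-}<\infty$) the theorem itself already asserts an almost sure limit equal to a deterministic constant, and if both are infinite the two bounds sandwich $X^{(1)}_n/n$ at $0$. In every case the limit is a constant from the outset, so no zero--one law, no ergodicity upgrade, and no comparison of walks started at different points is ever needed. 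Your worry about incompatible nonzero limits on $A_{+}\setminus A_{-}$ versus $A_{-}\setminus A_{+}$ only arises because you made the limit random by conditioning; working with the unconditional deterministic bounds dissolves it.
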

\begin{proof}
First, all the equalities and inequalities in this proof should be understood ``almost surely". Observe that if P satisfies (SE) then is both right stationary and ergodic and left stationary and ergodic.
We will use Theorem \ref{thm:DLLN} twice, one for the right stationary and ergodic case and one for the left stationary and ergodic one. Using it for right stationarity and ergodicity we get $\limsup_{n\rightarrow\infty} \frac{X_n^{(1)}}{n} \le \frac{1}{u_+^{(1)}}$ and if $u_+^{(1)}<\infty$ then a law of large numbers holds with $\lim_{n\rightarrow\infty} \frac{X_n^{(1)}}{n} = \frac{1}{u_+^{(1)}}$. Using the theorem for left stationarity and ergodicity gives us $\liminf_{n\rightarrow\infty} \frac{X_n^{(1)}}{n} \ge \frac{1}{u_-^{(1)}}$ and if $u_-^{(1)}<\infty$ then a law of large numbers holds with $\lim_{n\rightarrow\infty} \frac{X_n^{(1)}}{n} = \frac{1}{u_-^{(1)}}$. If both $u_+^{(1)}=\infty$ and $u_-^{(1)}=\infty$ then we get $\limsup_{n\rightarrow\infty} \frac{X_n^{(1)}}{n} \le 0 \le \liminf_{n\rightarrow\infty} \frac{X_n^{(1)}}{n}$ and so
$\lim_{n\to\infty}\frac{X_n^{(1)}}{n} = 0$ \end{proof}

In Section $5$ of \cite{zerner2005multi}, Zerner observed the following:
\begin{lemma}\label{lem:directionalSEleftover}
If $P$ is a right (left) stationary and ergodic measure over cookie environments which is a.s.\ right (left) transient, then the distribution of the leftover environment is also right (left) stationary and ergodic.
\end{lemma}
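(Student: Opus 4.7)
The plan is to exploit the regeneration structure induced by right transience of the walker. For $m \geq 0$ call $m$ an \emph{optional regeneration position} for $\omega$ if the walker started at $m$ on $\omega$ never visits $m-1$, and let $\mathcal{R}(\omega) \subset \Z_{\geq 0}$ denote this (random) set; note $m \in \mathcal{R}(\omega)$ if and only if $0 \in \mathcal{R}(\theta^m\omega)$. First I would verify that $P(0 \in \mathcal{R}) > 0$ (by an argument similar to Corollary \ref{cor:T.infinite.Vs.X.transient}: if $P(0 \in \mathcal{R}) = 0$, then by right-stationarity the walker would almost surely return from above to any fixed site, contradicting $X_n \to +\infty$); then an ergodic-theorem argument as in Lemma \ref{le:inf_opt_reg} yields that $\mathcal{R}$ has positive density on $\Z_{\geq 0}$ and is a.s.\ infinite.

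The crux is the following local identity: on $\{m \in \mathcal{R}(\omega)\}$, for all $y \geq 0$ and $n \in \N$,
\[ LO(\omega)(m+y, n) \;=\; LO(\theta^m\omega)(y, n). \qquad (\star) \]
Indeed, on this event the walker on $\omega$ from $0$ a.s.\ reaches $m$ in finite time $T_m$ (by right transience) and, by the nearest-neighbor property, has not yet touched any arrow in $[m, \infty)$; from $T_m$ onward it follows the same deterministic rule as a walker started at $m$ on $\omega$, which in turn is the walker on $\theta^m\omega$ started at $0$ shifted by $+m$, and this latter walker remains in $[0, \infty)$ precisely because $0 \in \mathcal{R}(\theta^m\omega)$. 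Matching the asymptotic local times at sites $\geq m$ with those at sites $\geq 0$ then yields $(\star)$.

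Right-stationarity of the law of $\{LO(\omega)(x,\cdot)\}_{x\geq 0}$ is then deduced as follows. Fix $z \geq 0$ and let $M(\omega)$ be the first element of $\mathcal{R}(\omega)$ that is $\geq z$ (a.s.\ finite). By $(\star)$, $\{LO(\omega)(M(\omega)+y, \cdot)\}_{y \geq 0}$ is a function of $\theta^{M(\omega)}\omega|_{[0,\infty)}$, and the analogous statement holds for the first regeneration position $M'(\omega) \geq 0$. Since the joint distribution of $(\omega|_{[0,\infty)}, \mathcal{R} \cap [0,\infty))$ is stationary under right shifts by the assumption on $P$, the laws of the two ``tails'' match, and combining with $(\star)$ yields $\{LO(\omega)(y, \cdot)\}_{y \geq 0} \stackrel{d}{=} \{LO(\omega)(y+z, \cdot)\}_{y \geq 0}$. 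Ergodicity follows in the same way by lifting shift-invariant events of the leftover back to shift-invariant events of $\omega|_{[0, \infty)}$, which have probability in $\{0, 1\}$ by ergodicity of $P$.

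The main obstacle will be in the third paragraph: $LO(\omega)$ on the initial segment before the first regeneration position depends on $\omega$ at negative sites via the walker's early excursions to the left of $0$, so $LO(\omega)$ and $\theta^z LO(\omega)$ cannot be compared site-by-site from stationarity of $P$ alone. The identity $(\star)$ circumvents this by matching only from a common regeneration position onward, where the environment and the regeneration structure are jointly stationary under $P|_{[0,\infty)}$; making this coupling precise amounts to Palm-type bookkeeping for the stationary point process $\mathcal{R}$.
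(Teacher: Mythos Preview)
The paper does not prove this lemma; it simply attributes the observation to Zerner \cite{zerner2005multi}, Section~5. So there is no in-paper argument to compare against, and I evaluate your proposal on its own merits.

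Your identity $(\star)$ is correct at regeneration positions, and the first two paragraphs are fine. The gap is in the third paragraph. You establish that the law of the tail $\{LO(\omega)(M+y,\cdot)\}_{y\ge 0}$, with $M$ the first regeneration position $\ge z$, matches the law of the tail from $M'$, the first regeneration position $\ge 0$. But this does \emph{not} give $\{LO(\omega)(y,\cdot)\}_{y\ge 0}\stackrel{d}{=}\{LO(\omega)(y+z,\cdot)\}_{y\ge 0}$: nothing has been said about the initial segments $[0,M')$ and $[z,M)$. Your last paragraph concedes this and defers to ``Palm-type bookkeeping'', but that bookkeeping is precisely the missing step, and it is not carried out.

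There is also a factual error in your last paragraph that points to the clean fix. You assert that $LO(\omega)$ on $[0,M')$ ``depends on $\omega$ at negative sites via the walker's early excursions to the left of $0$''. It does not: on a right-transient arrow environment every left excursion from $0$ returns to $0$ and contributes nothing to local times at sites $\ge 0$, so $LO(a)|_{[0,\infty)}$ is a function of $a|_{[0,\infty)}$ alone. More to the point, the excursion decomposition (exactly as in the proof of Lemma~\ref{le:initial positions}) shows that the walker from $0$ and the walker from $m$ have the \emph{same} local times at every site $\ge m$, since they perform the same list of right-excursions above $m$, merely interleaved with different left-excursions. Hence your identity $(\star)$ holds for \emph{every} $m\ge 0$, not only at regeneration positions. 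This upgraded $(\star)$ says precisely that $a|_{[0,\infty)}\mapsto LO(a)|_{[0,\infty)}$ commutes with the shift, i.e.\ is a factor map; right-stationarity and right-ergodicity of the leftover environment then follow immediately from those of $P$, with no regeneration or Palm machinery needed.
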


We are now ready to prove the law of large numbers - Theorem \ref{thm:ZernerLLN}
\begin{proof}[Proof of Theorem \ref{thm:ZernerLLN}]
Let $R$ be the threshold from Theorem \ref{thm:TransienceThreshold}. Assume without loss of generality that $R\geq 0$.
Let $X^{(k)}$ be the walk on the $(k-1)$-leftover environment.
Then, for any $k\leq R$, $X^{(k)}$ is a.s.\ transient to the right, and in particular $\liminf_{n\rightarrow\infty} \frac{X_n^{(k)}}{n} \ge 0$. Moreover, by inductively applying Lemma \ref{lem:directionalSEleftover} we get that $a_j$, $j\leq R$, are all right stationary and ergodic. Therefore by Theorem \ref{thm:DLLN} $\limsup_{n\rightarrow\infty} \frac{X_n^{(j)}}{n} \le \frac{1}{u_+^{(j)}}$ for $j\leq R$ and if $u_+^{(j)}<\infty$ then a law of large numbers holds with $\lim_{n\rightarrow\infty} \frac{X_n^{(j)}}{n} = \frac{1}{u_+^{(j)}}$. On the other hand if $u_+^{(j)}=\infty$ we may use both inequalities to get that $\lim_{n\to\infty}\frac{X_n^{(j)}}{n} = 0$.

Set $k=\lfloor R+1\rfloor$. To finish the proof it is enough to show that a law of large numbers holds for $X^{(k)}$, which is a.s.\ recurrent. In other words, it is sufficient to show that $\lim_{n\rightarrow \infty} \frac{X^{(k)}_n}{n}=0$. Indeed, by the definition of the leftover environments, the environment left over by the walk $X^{(k)}$ is the balanced environment (that is, there are no cookies left), thus all subsequent walks will be simple random walks and in particular satisfy a law of large numbers with speed $0$.
By an inductive use of Lemma \ref{lem:directionalSEleftover} it follows that the distribution of $a_k$ is right stationary and ergodic. Now, $u_+^{(k)}=\infty$ as otherwise by Theorem \ref{thm:DLLN} we would have a positive speed, and in particular right transience. Therefore by Theorem \ref{thm:DLLN} $\limsup_{n\rightarrow\infty} \frac{X_n^{(k)}}{n} \le \frac{1}{u_+^{(k)}}=0$. To finish the proof we need to show that $\liminf_{n\rightarrow\infty} \frac{X_n^{(k)}}{n} \ge 0$. To that end let us define a random variable $M$ to be the minimal position of all first $k$ walkers: $$M=\min\{X^{(j)}_n:j=1,...,k-1,\, n\ge0\}.$$ (In the case that $k=1$ define $M=0$). Since all $k$ walkers are a.s.\ transient to the right then $M$ is a.s.\ finite. Let $T_{M}$ be the first hitting time of $M$ by the $k$-th walker. As $X^{(k)}$ is a.s.\ recurrent $T_M$ is a.s.\ finite. Now define a new process $X'$ by $X'_n=X^{(k)}_{n+T_M}$. Note that $X'$ is an excited random walker in the environment $\om'$ so that $\om'(x,i)=\om(x+M,i)$ for all $x\le 0$. In particular the distribution of $w'$ is left stationary and ergodic. It follows from Theorem \ref{thm:DLLN} that $\liminf_{n\to\infty}\frac{X'_n}{n}\ge 0$ (indeed, the involved $u_-$ must be $\infty$ otherwise the walker $X^{(1)}$ on $\omega$ is transient to the left, contradicting Theorem \ref{thm:TransienceThreshold} or the assumption $R\ge 0$). But as $T_M$ is a.s.\ finite, writing $m=n-T_M$ for large $m$ one gets that
$$0\le \liminf_{n\to\infty}\frac{X'_n}{n} =\liminf_{m\to\infty}\frac{X^{(k)}_{m}}{m-T_M} =\liminf_{m\to\infty}\frac{X^{(k)}_{m}}{m}.$$
Thus $\lim_{n\to\infty}\frac{X^{(k)}_{n}}{n}=0$ as required, concluding the proof.
\end{proof}

\section{Branching processes with migration }\label{sec:BPwM}
As mentioned above, Kosygina and Zerner considered in \cite{kosygina2008positively} the case of (IID), (BD), and (WEL). A crucial observation in their paper is that the process $Z$ in this case has a particular form of a branching process with migration (BPwM). As the field of BPwM is quite developed since the 1970's, they looked for a theorem on BPwM in the desired form. However as no theorem with an accurate formulation was found in the literature, they instead used theorems of Formanov-Yasin \cite{FY1989BPwMig}, and Formanov-Yasin-Kaverin \cite{FYK1990BPwMig} which had in some sense the ``closest" formulation. One of the main steps in \cite{kosygina2008positively} was to deduce a theorem of `their' form of BPwM from the above mentioned theorems. Although the argument in Kosygina and Zerner \cite{kosygina2008positively} did not follow the lines of first explicitly formalizing a theorem regarding some general BPwM and then using it, it is however possible to reformulate their argument in such a way. Since the form of Kosygina and Zerner is also convenient for us here, we will formulate a weaker version of a theorem which is implicit in their paper (up to minor change of parameters), and whose prove can be deduced directly by following their argument (see the discussion in the end of Section 3 of \cite{kosygina2008positively}).

Fix $M\in\N$ and let $\nu_1,...,\nu_M$ be probability distributions on $\Z\cap [-M,+\infty)$ so that $\nu_1(i),...,\nu_M(i)>0$ for all $i\ge 0$, and the cumulative distribution functions satisfy $\nu_j((-M,x))\ge \nu_M((-M,x)))$ for all $1\le j\le M$ and all $x\in \R$.
Denote by $\gamma$ the expectation of $\nu_M$, which we assume to be finite.
\begin{equation}\label{eq:gamma}
\gamma:=\sum_{j\ge 0} j\nu_M({j})<\infty.
\end{equation}
 Let $\xi^{j}_i,i,j\ge 0$ be i.i.d.\ $\textmd{Geom}(\frac{1}{2})$ random variables, and for each $i=1,...,M$ let $\eta^{j}_i,j\ge 0$ be i.i.d.\ random variables with distribution $\nu_i$.

 \begin{defi}
 For a discrete time process $Y$ on $\Z_+$, its \emph{total progeny} is defined by $$\tilde{Y}=\sum_{n=0}^\tau Y_n,$$ where $\tau=\inf\{n\ge 0:Y_n=0\}$ is the (perhaps infinite) \emph{hitting time} of $0$ by $Y$.
    \end{defi}

\begin{theorem}\label{thm:KZBPwM} Fix initial value $y\in\Z_+$ and a constant migration $N\in\Z$.
Let $Y=(Y_n)_{n\ge 0}$ be a process defined by $Y_0=y$, and
\begin{equation}\label{eq:BPwMformY}
Y_{n+1}=\sum_{i=1}^{Y_n+N-M}\xi_i^{(n+1)} + \eta_{(Y_n+N)\wedge M}^{(n+1)}
\end{equation}
where by convention $\sum_{i=1}^{-j}\xi_i^{(n+1)}\equiv 0$ and $\eta_{-j}^{(n+1)}\equiv 0$, $j \ge 0$. Then $Y$ dies out (that is $\tau<\infty$ or equivalently $\tilde{Y}<\infty$) a.s.\ if and only if $\gamma -M + N \le 1$. Moreover, the total progeny $\tilde{Y}$ of $Y$ has finite expectation if and only if $\gamma -M + N < - 1$.
\end{theorem}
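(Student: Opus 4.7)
The plan is to recognize Theorem \ref{thm:KZBPwM} as a statement about a critical branching process with migration (BPwM), with effective drift per generation $\theta := \gamma - M + N$, and to reduce it to the classical theory of such processes in the spirit of Kosygina and Zerner \cite{kosygina2008positively}. Since each $\xi_i^j\sim\textmd{Geom}(1/2)$ has mean $1$ and finite variance, $Y$ is \emph{critical}; a direct computation from \eqref{eq:BPwMformY} gives, for $y\ge M-N$,
$$\E[Y_{n+1}-Y_n\mid Y_n=y]\;=\;(y+N-M)\cdot 1 + \gamma - y\;=\;\theta.$$
Thus the claimed thresholds---$\theta\le 1$ for a.s.\ extinction and $\theta<-1$ for finite expected total progeny---are exactly the classical thresholds for a critical BPwM with mean migration $\theta$.

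The first step is to rewrite \eqref{eq:BPwMformY} in the canonical BPwM form with state-independent migration. For $Y_n=y\ge M-N$ one can write
$$Y_{n+1}\;=\;\sum_{i=1}^{y}\xi_i^{(n+1)}\;+\;\widetilde M_{n+1},\qquad \widetilde M_{n+1}\;:=\;\eta_M^{(n+1)}-\sum_{i=1}^{M-N}\xi_i^{(n+1)},$$
so that $\widetilde M_{n+1}$ is an i.i.d.\ sequence with mean $\theta$ and finite variance. For $y<M-N$ the migration law depends on $y$ but remains supported in $[-M,\infty)$. The monotonicity hypothesis $\nu_j((-M,x))\ge\nu_M((-M,x))$ for $1\le j\le M$ means that $\nu_M$ stochastically dominates each $\nu_j$; via the standard monotone coupling of these distributions one can sandwich $Y$ between two homogeneous BPwM's whose migrations do not depend on the state and whose asymptotic means both equal $\theta$. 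This reduces the analysis of $Y$ to that of a genuine critical BPwM with i.i.d.\ integrable migration of mean $\theta$.

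With this reduction, I would invoke the classical theorems of Formanov--Yasin \cite{FY1989BPwMig} and Formanov--Yasin--Kaverin \cite{FYK1990BPwMig} to conclude extinction a.s.\ iff $\theta\le 1$, and to conclude $\E[\widetilde Y]<\infty$ iff $\theta<-1$. The main technical obstacle is that these classical results are stated under slightly different hypotheses than ours, and in particular the boundary case $\theta=1$ requires care (typically a second moment condition on the migration). As observed in the discussion at the end of Section~3 of \cite{kosygina2008positively}, the necessary adaptation is carried out there for essentially the same process in their setting; their argument transfers almost verbatim to yield both the extinction criterion $\theta\le 1$ and the finiteness-of-mean criterion $\theta<-1$ for the total progeny $\widetilde Y$.
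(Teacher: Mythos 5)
Your proposal follows essentially the same route as the paper: the paper itself only sketches this theorem in a remark, reducing \eqref{eq:BPwMformY} to a homogeneous $(\mu,\nu)$-branching process (via a substitute process $Y'$ that uses the migration $\eta_M$ in every state, followed by the time-shift $Z_n=Y'_{n+1}-\eta_M^{(n)}$) and then invoking Theorem A of \cite{kosygina2008positively}, i.e.\ the Formanov--Yasin and Formanov--Yasin--Kaverin results, exactly as you do. The one caveat is that a two-sided ``sandwich'' by homogeneous BPwM's is not actually available from the hypotheses (only $\nu_M$ stochastically dominates the $\nu_j$, giving a one-sided comparison), so the remaining direction must be handled, as in the paper, by the finite-modification/irreducibility argument using $\nu_j(i)>0$ together with Lemma 15 of \cite{kosygina2008positively} for the total-progeny part --- but since you explicitly defer to the Kosygina--Zerner argument, this is covered.
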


\begin{remark}
Kosygina and Zerner proved Theorem \ref{thm:KZBPwM} for the case in which $\nu_j,$ $1\le j\le M$, have specific distributions, $N=0$ in the first part, and $N=1$ in the ``moreover" part, but their proof works also in this formulation. Indeed, they derived their result on processes of the form \eqref{eq:BPwMformY} from results in the literature for what is known as $(\mu,\nu)$ - branching processes (see \cite{kosygina2008positively} for the definition and the main result from the literature - Theorem A in their paper). One of their main steps in the paper is to show how to move from the formulation \eqref{eq:BPwMformY} to $(\mu,\nu)$ - branching processes, this is done in chapter 4 of their paper. We sketch the argument in our setting.
The first step is to define $(Y_n')$ by $Y'_0=1$ and $Y'_{n+1} = \sum_{i=1}^{Y'_n+N-M} \xi^{n+1}_i  +  \eta_M^{(n+1)}$. Since $\nu_1(i),...,\nu_M(i)>0$ for all $i\in\N$ and as the transition probability from $i$ to $j$ in both processes $Y$ and $Y'$ differ only for $i\in\{0,...,M-1\}$ then if one of the processes goes to infinity w.p.p the so does the other (see the discussion above Lemma 6. of \cite{kosygina2008positively}). Moreover, Lemma $15$ of \cite{kosygina2008positively} together with the conditions on $\nu_j$ implies that the total progeny of $Y'$ has finite expectation if and only if so does $Y$.
The last step is to connect the process $Y'$ to a $(\mu,\nu)$ - branching processes, call it $Z$. For that we define inductively $Z_n = Y'_{n+1}-\eta_M^{(n)}$. Then similarly to Lemma 6 of \cite{kosygina2008positively} we get that $Z$ is a $(\mu,\nu)$ - branching processes, where $\mu=Geom(\frac{1}{2})$ and $\nu$ is the common distribution of $\eta_1^{(1)}+N-M$. Theorem \ref{thm:KZBPwM} now follows from Theorem (A) in \cite{kosygina2008positively}.
\end{remark}

\section{Right transience of the walkers in leftover environments}\label{sec:transienceLeftover}

In this chapter we show that under the assumptions of Theorem \ref{thm:bddCaseTransience} the process $Z$ is a BPwM, and then derive its proof using Theorem \ref{thm:KZBPwM}. In the next lemma we will show that the process $Z$ defined in \eqref{eq:defOfz} under $\QuenchedTrees$ has the same distribution as a process defined in Theorem \ref{thm:KZBPwM}.
Let $B_i,i\ge 1$, be a sequence of independent Bernoulli random variables so that $B_i\sim B(p_i), 1\le i\le M$ and $B_i\sim B(\frac{1}{2})$ for $i>M$.
Define for each $1\le j\le M$ and $r\ge 0$
\begin{equation}\label{eq:nuJ}
f_j(r)=\AnnealedCookies(\inf\{t\ge 1: \sum_{i=1}^t (1 - B_i)  = j\}-j=r).
\end{equation}

\begin{lemma}\label{lem:ZisBPwM} Let $Z$ be the process defined in \eqref{eq:defOfz}. Let $(Y,P)$ be from Theorem \ref{thm:KZBPwM} with parameters $N=-(k-1)$, $y=k$ and $\nu_j=f_j(\cdot)$, $1\le j\le M$.
Then $(Y,P)$ and $(Z,\PP^1)$ have the same distribution.
\end{lemma}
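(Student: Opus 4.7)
The plan is to verify that both $(Y,P)$ and $(Z,\AnnealedTrees)$ are time-homogeneous Markov chains with the same initial value and one-step transition kernel. Matching initial values is immediate ($Z_0 = k = y = Y_0$). The Markov property of $Y$ is built in via the independent families $\{\xi_i^{(n)}\}$ and $\{\eta_j^{(n)}\}$ across $n$, while that of $Z$ follows from (IID): under $\AnnealedTrees$ the arrow sequences $(a(n,\cdot))_{n\ge 0}$ are i.i.d.\ across $n$, so $a(n,\cdot)$ is independent of $(Z_0,\ldots,Z_n)$. It therefore suffices to show that for every $z\in\N_0$ the conditional distributions of $Z_{n+1}$ given $Z_n=z$ and of $Y_{n+1}$ given $Y_n=z$ coincide.

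When $z<k$ both chains jump to $0$: for $Z$ this is immediate from \eqref{eq:defOfz}, and for $Y$ it follows from $z-(k-1)-M<0$ and $z-(k-1)\le 0$, which by the stated conventions make the geometric sum empty and $\eta^{(n+1)}_{(z-(k-1))\wedge M}=0$. Suppose now $z\ge k$ and set $m=z-(k-1)\ge 1$. Under $\AnnealedTrees$ the sequence $a(n,\cdot)$ has the same law as $(B_i)_{i\ge 1}$ of \eqref{eq:nuJ}: conditionally on the cookie stack $\omega(n,\cdot)$ the arrows are independent Bernoullis with biases $\omega(n,i)$, and (BD) forces $\omega(n,i)=1/2$ for $i>M$. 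In the easy case $m\le M$, $Z_{n+1}$ is by \eqref{eq:defOfz} the number of $1$'s preceding the $m$-th $0$ in this sequence, whose annealed law is $f_m=\nu_m$ by definition, while $Y_{n+1}=\eta^{(n+1)}_m\sim\nu_m$ (the geometric sum being empty since $m-M\le 0$), so the transitions agree.

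The case $m>M$ requires a decomposition. Let $\tau_M$ denote the index of the $M$-th $0$ in $(B_i)$; since positions $1,\ldots,M-1$ contain at most $M-1$ zeros, one always has $\tau_M\ge M$, hence every index strictly greater than $\tau_M$ exceeds $M$. By (BD) the arrows at those indices are i.i.d.\ fair Bernoullis and are deterministically independent of $(B_1,\ldots,B_{\tau_M})$. Split the number of $1$'s before the $m$-th $0$ as $A+C$, where $A$ is the count before the $M$-th $0$ and $C$ is the count strictly between the $M$-th and the $m$-th $0$. By definition $A\sim\nu_M=f_M$; by the strong Markov / renewal property of the i.i.d.\ fair Bernoulli tail after $\tau_M$, together with the elementary fact that the number of $1$'s between consecutive zeros in a fair Bernoulli sequence is $\mathrm{Geom}(1/2)$, we obtain $C\stackrel{d}{=}\sum_{i=1}^{m-M}\xi^{(n+1)}_i$ independently of $A$. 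Therefore the annealed conditional law of $Z_{n+1}$ given $Z_n=z$ coincides with that of $\eta^{(n+1)}_M+\sum_{i=1}^{m-M}\xi^{(n+1)}_i=Y_{n+1}$ given $Y_n=z$, completing the matching. The main technical point is this last independence, and it is where (BD) enters essentially: it cuts the random portion of the cookie stack at depth $M$ and yields a genuine renewal after $\tau_M$, so that the tail contribution is a clean sum of i.i.d.\ $\mathrm{Geom}(1/2)$'s detached from the law of $\eta_M$.
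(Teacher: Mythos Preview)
Your proof is correct and follows essentially the same approach as the paper's: both verify that $Z$ and $Y$ are Markov chains with matching initial values and one-step transition kernels, splitting into the cases $z-(k-1)\le M$ and $z-(k-1)>M$ and, in the latter, decomposing the count of $1$'s before the $(z-(k-1))$-th $0$ into a $\nu_M$-distributed piece plus an independent sum of $\mathrm{Geom}(1/2)$'s. Your treatment is slightly more careful than the paper's (you handle the absorbing case $z<k$ explicitly and justify the independence of the tail via $\tau_M\ge M$), but the argument is the same.
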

\begin{proof} First observe that since $P$ is (IID) and (ND) then $Z$ is a Markov chain on the non-negative integers. Now, the proof follows by induction on $n\ge 0$. For $n=0$, both processes have a.s.\ the same initial conditions. For the induction step, it is enough to show that for any $m$, the distribution of $Z_{n+1}$ given $Z_n=m$ under $\PP^1$ is the same as that of $Y_{n+1}$ given $Y_n=m$ under $P$. Assume therefore that $Z_n=Y_n=m$. Then, under $\PP^1$, $Z_{n+1}$ equals the number of $1$'s in $\om(n,\cdot)$ prior to $m+N$ $0$'s. If $m+N\le M$ then, under $\PP^1$, $Z_{n+1}$ has a distribution $\nu_{m+N}$, which coincides with $Y_{n+1}$ under $P$. If $m+N=M+j$, with $j>0$, then $Z_{n+1}$ is distributed as the number of $1$'s in $\om(n,\cdot)$ prior to $M$ $0$'s plus a negative binomial random variable $Q\sim NB(\frac{1}{2},j)$. The first summand is distributed as $\nu_M$, while a negative binomial random variable is a sum of i.i.d.\ geometric random variables. Therefore, also in this case the distribution of $Z_{n+1}$ under $\PP^1$ coincides with that of $Y_{n+1}$ under $P$.
\end{proof}

\begin{lemma}[Basdevant-Singh \cite{basdevant2008speed}]\label{lem:valueOfGamma}
Let $Y$ be the process defined in Lemma \ref{lem:ZisBPwM}, and $\gamma$ be the related expectation defined in \eqref{eq:gamma}. Then $\gamma = \delta + M$.
\end{lemma}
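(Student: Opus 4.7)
The plan is to compute the expectation $\gamma$ of $\nu_M = f_M$ directly from its definition and relate it to $\delta$ using the boundedness assumption (BD).

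First, I would unravel the definition: if I set $T := \inf\{t \geq 1 : \sum_{i=1}^{t}(1-B_i) = M\}$, then $\gamma = \E[T - M]$, which is the expected number of ones in the sequence $(B_i)_{i \geq 1}$ before the $M$-th zero. Let $p_i := \AnnealedCookies(B_i = 1)$; under (IID) this equals $E[\om(0,i)]$, and under (BD) we have $p_i = 1/2$ for $i > M$.

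Next I would split the counting at index $M$. Write $S_1 := \sum_{i=1}^M B_i$ for the number of ones among $B_1,\dots,B_M$ and $F_1 := M - S_1$ for the number of zeros there. After exhausting $B_1,\dots,B_M$, we still need $M - F_1 = S_1$ additional zeros, and these must come from the i.i.d.\ $B(1/2)$ tail $B_{M+1}, B_{M+2}, \dots$ Conditional on $S_1$, the number $Q$ of extra ones seen in the tail before the required $S_1$ zeros appear is negative binomial $NB(S_1, 1/2)$, so $\E[Q \mid S_1] = S_1$. Therefore
\begin{equation*}
\gamma = \E[T-M] = \E[S_1 + Q] = 2\E[S_1] = 2\sum_{i=1}^{M} p_i.
\end{equation*}

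Finally I would compare this with the definition of $\delta$. Under (BD) the sum defining $\delta$ telescopes to a finite one:
\begin{equation*}
\delta = E\Bigl[\sum_{i=1}^{\infty}(2\om(0,i)-1)\Bigr] = \sum_{i=1}^{M}(2 p_i - 1) = 2\sum_{i=1}^{M} p_i - M.
\end{equation*}
Combining the two displays yields $\gamma = \delta + M$, as claimed.

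There is no real obstacle here; the content is essentially bookkeeping once one recognizes that the $B(1/2)$ tail contributes equal expected counts of ones and zeros, so the expected number of ones before the $M$-th zero is exactly $2\sum_{i=1}^M p_i$. The only care needed is to check that $\E[S_1] < \infty$ (trivial, since $S_1 \le M$) and that (BD) indeed kills the tail of $\delta$, both of which are immediate.
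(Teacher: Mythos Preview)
Your proof is correct and follows essentially the same approach as the paper's: both split the count of ones at index $M$, observe that the tail contribution is negative binomial with parameter equal to the number of successes $S_1$ among the first $M$ trials, and conclude $\gamma = 2\E[S_1] = 2\sum_{i=1}^M p_i = \delta + M$. Your write-up is in fact slightly cleaner, as the paper's version has $H\sim NB(1/2,F)$ where it should read $NB(1/2,M-F)$, a slip you avoid.
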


\begin{proof}
We follow Lemma 3.3 of \cite{basdevant2008speed} and equation (23) of \cite{kosygina2008positively}.
Let $F$ be the number of failures in $M$ trials, and given $F$ define $H$ to be a negative binomial random variable $H\sim NB(\frac{1}{2},F)$. Then $M-F$ is the number of successes in $M$ trials, and $\nu_M\sim M - F + H$. Therefore,
$\gamma= M-\mathbb{E}[F]+ \mathbb{E}[H]=M-\sum_{i=1}^M(1-p_i)+\sum_{i=1}^M p_i = \sum_{i=1}^M 2 p_i = \delta + M$.
\end{proof}

We get the following corollary.
\begin{cor}\label{cor:deltaVs.T-1}
Let $P$ be a probability measure over cookie environments satisfying (IID), (WEL), and (BD). Let $T_{-1}$ be as usual the hitting time of $-1$ by the $k$-minimum walk $X$. Then $\AnnealedWalks(T_{-1}=\infty)>0$ if and only if $\delta>k$
\end{cor}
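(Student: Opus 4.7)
The plan is to chain together the results that have already been proved in the previous section. The statement is purely a translation: it says the $k$-minimum walk has a positive probability to escape to the right (not hit $-1$) precisely when the expected drift per site exceeds the number of walkers. All the hard work is bundled in the earlier lemmas; what remains is a bookkeeping computation.

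First I would reduce the event $\{T_{-1}=\infty\}$ to a survival event for $Z$. By Theorem \ref{thm:tzn}, as a deterministic statement on arrow environments, $T_{-1}<\infty$ if and only if $Z_n=0$ for some $n$, so as events
\[
\{T_{-1}=\infty\} = \{Z_n>0\text{ for all }n\}.
\]
In particular $\AnnealedWalks(T_{-1}=\infty)>0$ is equivalent to $\AnnealedCookies(Z_n>0\text{ for all }n)>0$ (Lemma \ref{lem:T-1iffZ} can alternatively be invoked for the quenched version and then integrated, but the deterministic identification suffices since (IID), (WEL), (BD) all imply (ND)).

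Next I would transfer the question to the branching process with migration. Lemma \ref{lem:ZisBPwM} tells us that under $\AnnealedCookies$ the process $Z$ has the same law as the process $Y$ from Theorem \ref{thm:KZBPwM}, with parameters $N=-(k-1)$, initial value $y=k$, and offspring distributions $\nu_j = f_j$ for $1\le j\le M$. Consequently $Z$ survives with positive probability if and only if $Y$ does not die out a.s., which by Theorem \ref{thm:KZBPwM} is equivalent to $\gamma - M + N > 1$.

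Finally I would plug in the value of $\gamma$. Lemma \ref{lem:valueOfGamma} gives $\gamma = \delta + M$, so
\[
\gamma - M + N \;=\; (\delta+M) - M - (k-1) \;=\; \delta - k + 1,
\]
and the survival criterion $\gamma - M + N > 1$ becomes $\delta > k$. Reading back through the chain of equivalences yields $\AnnealedWalks(T_{-1}=\infty)>0$ iff $\delta>k$. There is no real obstacle here beyond matching indices correctly: the only place where a slip is possible is in the bookkeeping of $N$ and $M$ when invoking Theorem \ref{thm:KZBPwM}, and that is a one-line calculation.
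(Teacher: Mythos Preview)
Your proof is correct and follows essentially the same route as the paper: reduce $\{T_{-1}=\infty\}$ to survival of $Z$ via Theorem~\ref{thm:tzn} (equivalently Lemma~\ref{lem:T-1iffZ}), identify $Z$ with the branching process $Y$ of Theorem~\ref{thm:KZBPwM} via Lemma~\ref{lem:ZisBPwM}, and then plug $\gamma=\delta+M$ from Lemma~\ref{lem:valueOfGamma} into the survival criterion $\gamma-M+N>1$ with $N=-(k-1)$. Your bookkeeping is in fact cleaner than the paper's own proof, which contains a sign slip in the displayed inequality (it writes $\gamma<M+1-N$ where it should be $\gamma>M+1-N$), though the final conclusion there is stated correctly.
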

\begin{proof}
Let $(Y,P)$ be from Theorem \ref{thm:KZBPwM} with parameters $N=-(k-1)$, $y=k$ and $\nu_j=f_j(\cdot)$, $1\le j\le M$.
By Lemma \ref{lem:ZisBPwM} $(Y,P)$ and $(Z,\PP^1)$ have the same distribution, and by Theorem \ref{thm:KZBPwM} $(Y,P)$ has a positive probability of survival if and only if $\gamma < M+1-N=M+k$. Lemma \ref{lem:valueOfGamma} tells us that $\gamma = \delta + M$. Put together we get that $(Z,\PP^1)$ has a positive chance of survival if and only if $\delta>k$.
The Corollary now follows from Lemma \ref{lem:T-1iffZ}, which implies that $\AnnealedWalks(T_{-1}=\infty >0)$ if and only if $\PP^1(Z_n>0 \text{ for all } n) > 0$.
\end{proof}
We are now able to prove Theorem \ref{thm:bddCaseTransience}.

\begin{proof} Assume \WLOG~that $\delta\ge 0$.
By Corollary \ref{cor:deltaVs.T-1} $\delta>k$ if and only if $\AnnealedWalks(T_{-1}=\infty)>0$. By Corollary \ref{cor:T.infinite.Vs.X.transient}, the latter holds if and only if $\AnnealedWalks(X_n\to\infty)>0$, which holds, by Corollary \ref{cor:seqRightTranEquivMinRightTan} if and only if $\PP(a \text{ is $k$-right transient})>0$, and by Theorem \ref{thm:TransienceThreshold} is equivalent to $\AnnealedWalks(X_n\to\infty)=1$. Since by Corollary \ref{cor:seqRightTranEquivMinRightTan} the $k$-minimum walk is a.s.\ transient to the right if and only if a.s.\ all the walks $X^{(i)}$ on the $i$-leftover environment, $i<k$, are transient to the right, we are done.
\end{proof}

\section{Positive speed for the $k$-minimum walk}\label{sec:SpeedForkmin}
Throughout this section we assume that $P$ is (IID), (BD) and (WEL) probability measure over cookie environments, and \WLOG, that the transience threshold $R$ from Theorem \ref{thm:TransienceThreshold} is nonnegative. By Theorem \ref{thm:TransienceThreshold} $a$ is $\PP$-a.s.~$R$-right transient (but not $(R+1)$-right transient).
Note that for $k>R$ the walks $X^{(k)}$ are all $\AnnealedWalks$-a.s.~recurrent and therefore satisfy a law of large numbers with speed $0$. In particular we may assume that $R>0$. Fix $1\leq k\leq R$ and let $X=X^{(\min)}$ be the $k$-minimum walk. Since $a$ is $\PP$-a.s.~$k$-right transient, Lemma \ref{le:inf_opt_reg} implies that there $\AnnealedWalks$-a.s.~exists an infinite sequence $0\leq r_1<r_2<\ldots$ of regeneration positions for $X$.

\begin{remark}\label{re:reg_exchangability}
Consider the $k$-min mob walk. One may equivalently describe the regeneration positions as the set of visited positions $r\ge 0$ from which no particle jumps to the left. It follows from the last definition together with the exchangeability property of the particles (Theorem \ref{thm:exchangeability}), that the set of regeneration positions is independent of the $k$-scheduling chosen.
\end{remark}

Let $r_i$ be the $i$th nonnegative regeneration position and $\tau_i=T_{r_i}=\inf\{t\ge 0: X_t=r_i\}$ be the corresponding regeneration time.
Since we assumed the measure over the cookie environments is (IID) then the sequence
$(r_1,\tau_1),(r_{k+1}-r_k,\tau_{k+1}-\tau_k)\
(k\ge 1)$ of random vectors is independent under $\AnnealedWalks$.
Furthermore, the random vectors
$(r_{k+1}-r_k,\tau_{k+1}-\tau_k),\ k\ge 1$, have the
same distribution under $\AnnealedWalks$.

It follows from the renewal theorem (see e.g.~\cite{grimmett2001probability} Section $10.5$) that
\begin{equation}\label{eq:ren}
\E_0[r_2-r_1]=\AnnealedWalks[r_1=0]^{-1}<\infty.
\end{equation}
Moreover, the ordinary strong law of large numbers implies that
\begin{equation}\label{eq:vmin}
\lim_{n\to\infty}\frac{X_n}{n}=\frac{\E_0[r_2 - r_1]}
{\E_0[\tau_2-\tau_1]}=:v^{(k-\min)}\qquad\mbox{$\QuenchedCookies$-a.s.,}
\end{equation}

We are ready to present the main result of this section:

\begin{theorem} \label{thm:PosSpeed}
$v^{(k-\min)}>0$ if and only if $\delta > k+1$
\end{theorem}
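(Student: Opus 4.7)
The plan is to reduce positivity of $v^{(k-\min)}$ to a finiteness question for the expected total progeny of a branching process with migration, and then invoke Theorem~\ref{thm:KZBPwM}. By the law of large numbers recorded in \eqref{eq:vmin} together with $\E_0[r_2 - r_1] < \infty$ from \eqref{eq:ren}, one has $v^{(k-\min)} > 0$ if and only if $\E_0[\tau_2 - \tau_1] < \infty$, so the whole task is to show $\E_0[\tau_2-\tau_1] < \infty \iff \delta > k+1$.

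To analyze $\tau_2-\tau_1$, I would decompose it combinatorially as a sum over the levels visited between $r_1$ and $r_2$. Since $r_1$ is a regeneration position, the $k$-minimum walk stays in $[r_1,\infty)$ throughout $[\tau_1,\tau_2]$ and all $k$ particles eventually cross each edge $(r_1+n-1,r_1+n)$ with net rightward flux $k$; summing directed crossings gives
\[
\tau_2 - \tau_1 \;=\; k(r_2-r_1) \;+\; 2\sum_{n=0}^{r_2 - r_1 - 1} L^{\mathrm{rel}}_n,
\]
where $L^{\mathrm{rel}}_n$ is the number of left-crossings of $(r_1+n, r_1+n-1)$ during $[\tau_1,\tau_2]$. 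The first summand has finite mean. For the second, Lemma~\ref{lem:ComparingCrossings}, applied in the picture centred at $r_1$ (in which the walk never returns to $r_1-1$), controls each $L^{\mathrm{rel}}_n$ by the corresponding value of the BPwM $Z^{\mathrm{rel}}$ from Lemma~\ref{lem:ZisBPwM}, built on the fresh environment to the right of $r_1$ with initial value $k$ and migration $N=-(k-1)$.

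The remaining step is to recognize the \emph{inter-regeneration} total progeny $\sum_{n=0}^{r_2-r_1-1} Z^{\mathrm{rel}}_n$ as the \emph{unconditional} total progeny of a BPwM in the form of Theorem~\ref{thm:KZBPwM}. Since $r_1$ is a regeneration site, $Z^{\mathrm{rel}}$ is precisely the supercritical BPwM ($\gamma-M+N=\delta-k+1>0$) conditioned on survival; conditioning on survival can be implemented by a Doob $h$-transform that turns $Z^{\mathrm{rel}}$ into a subcritical BPwM $\tilde Z$ of the same form~\eqref{eq:BPwMformY} but with a shifted initial value and modified migration distribution, for which the inter-regeneration window $[0,r_2-r_1)$ becomes the ordinary extinction window. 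Applying Theorem~\ref{thm:KZBPwM} to $\tilde Z$ and substituting $\gamma=\delta+M$ via Lemma~\ref{lem:valueOfGamma} yields the criterion $\tilde\gamma - \tilde M + \tilde N < -1$, which after the arithmetic should collapse to $\delta>k+1$.

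The main obstacle is making this change of measure (or, alternatively, an equivalent direct renewal-type decomposition of $Z^{\mathrm{rel}}$ into i.i.d.\ excursions between its running maxima) explicit enough that the threshold lands precisely on $k+1$, rather than on $k$ or $k+2$. This requires careful accounting of the boundary contributions near $r_1$ and $r_2$, as well as a faithful translation between regenerations of the walk and regenerations of the branching process, paralleling the $k=1$ analyses of Basdevant--Singh and Kosygina--Zerner. Once the correspondence is established, the sign of $v^{(k-\min)}$ follows from Theorem~\ref{thm:KZBPwM} and Lemma~\ref{lem:valueOfGamma}.
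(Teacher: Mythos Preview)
Your reduction to $\E_0[\tau_2-\tau_1]<\infty$ and the edge-crossing decomposition $\tau_2-\tau_1 = k(r_2-r_1)+2\sum_n D_n$ are exactly right, and match Lemma~\ref{lem:mom}. The gap is in the next step: you try to control the left-crossings via the \emph{forward} process $Z^{\mathrm{rel}}$ from Lemma~\ref{lem:ZisBPwM} (which counts $1$'s before $0$'s, i.e.\ upcrossings), observe it is supercritical on a regeneration block, and then propose a Doob $h$-transform to turn the inter-regeneration sum into the total progeny of some subcritical process $\tilde Z$ of the form~\eqref{eq:BPwMformY}. This is where the argument breaks down. First, by Lemma~\ref{lem:comparingZandW}(\ref{ZisGraterThanRightCrossings}) you only have $Z^{\mathrm{rel}}_n\ge w_n$ on $\{t_{-1}=\infty\}$, not equality, so $Z^{\mathrm{rel}}$ does not actually encode the downcrossings you need. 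Second, conditioning a BPwM on survival does not in general yield another process of the form~\eqref{eq:BPwMformY}, so Theorem~\ref{thm:KZBPwM} would not apply to $\tilde Z$; and there is no natural way to identify the random window $[0,r_2-r_1)$ with an extinction time of the transformed process. You essentially flag this yourself as ``the main obstacle'', but it is not a technicality---it is the heart of the matter.

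The paper avoids all of this by indexing the downcrossing process \emph{backward from $r_2$} rather than forward from $r_1$. Set $D_n$ to be the number of downcrossings of $(r_2-n,r_2-n-1)$ between $\tau_1$ and $\tau_2$. Since the last arrow used at each level in this block is a right arrow, one gets $D_{n+1}=\#\{0\text{'s before the }(D_n+k)\text{-th }1\text{ at site }r_2-n-1\}$, which is again a BPwM of the form~\eqref{eq:BPwMformY}, but now with the \emph{dual} migration distributions $g_j$ (counting $0$'s before $1$'s; see~\eqref{eq:nuJ2} and Lemma~\ref{lem:DandZhaveSameDistr}) and migration parameter $N=k$. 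This process starts at $D_0=0$, has $\gamma'=M-\delta$ (Lemma~\ref{lem:gamma'}), and is genuinely subcritical; its unconditioned total progeny is exactly $\sum_{n\ge 1}D_n$, with no $h$-transform or conditioning needed. Theorem~\ref{thm:KZBPwM} then gives $\E\big[\sum D_n\big]<\infty\iff \gamma'-M+N<-1\iff \delta>k+1$. The missing idea in your plan is precisely this reversal of indexing and the passage to the dual (left-arrow-counting) branching process.
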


By \eqref{eq:ren} and \eqref{eq:vmin} we have the following
\begin{equation}\label{eq:first}
\mbox{$v^{(k-\min)}>0$\quad if and only if \quad $\E_0[\tau_2-\tau_1]<\infty.$}
\end{equation}
Thus analyzing the positivity of the speed of the $k$-minimum walk boils down to understanding when is $\E_0[\tau_2-\tau_1]<\infty.$
To do so, we will follow closely the proof strategy of \cite{kosygina2008positively} and compare the $\E[\tau_2-\tau_1]$ with the total progeny of a branching process with migration.

\begin{defi}
A step $t$ of a $k$-mob walk is called a downcrossing of the edge $\{n-1,n\}$ if at time $t$ one of the particles moves from $n$ to $n-1$. Similarly, a step is called an upcrossing of $\{n-1,n\}$ if at time $t$ one of the particles moves from $n-1$ to $n$.
\end{defi}
It is clear that every step of a $k$-mob walk is either a downcrossing or an upcrossing.
By exchangeability, the total number of downcrossings of each edge is independent of the chosen scheduling. Note that for the $k$-min mob walk, a step $t$ is a downcrossing if and only if the $k$-minimum walk satisfies $X_{t+1}<X_t$.

For $n\ge 0$ we introduce
\begin{equation}\label{eq:Uk}
D_n:=\#\left\{t\mid \tau_1<t<\tau_2,\ X_t = x_2 -n,
\ X_{t+1}=x_2 - n - 1\right\}
\end{equation}
to be the numbers of downcrossings of the edge $(x_2-n,x_2-n-1)$ between
the times $\tau_1$ and $\tau_2$ by the $k$-min mob walk.

\begin{lemma}\label{lem:mom} Let $m\ge 1$. The $m$-th moment of $\tau_2-\tau_1$ under
$\QuenchedCookies$
is finite if and only if the $m$-th moment of $\sum_{n\ge 1}D_n$ is
finite.
\end{lemma}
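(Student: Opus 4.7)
My plan is to prove pathwise inequalities sandwiching $\tau_2-\tau_1$ and $\sum_{n\ge1}D_n$ between affine functions of each other (with constants depending only on $k$), from which the equivalence of $m$-th moments under $\QuenchedCookies$ is immediate via $(x+y)^m\le 2^{m-1}(x^m+y^m)$. Decompose $\tau_2-\tau_1=\mathcal{U}+\mathcal{D}$, where $\mathcal{U}$ and $\mathcal{D}$ count all upcrossings and all downcrossings (over all edges and all particles) during $(\tau_1,\tau_2]$. A downcrossing of an edge $(e-1,e)$ can only be performed by the current leftmost particle, so it requires $X_t=e,\ X_{t+1}=e-1$. Since $X$ is a nearest-neighbor walk with stays, and $\tau_1,\tau_2$ are first hitting times of $r_1,r_2$ with $X_0=0\le r_1$, we have $X_t\in[r_1,r_2-1]$ for all $\tau_1<t<\tau_2$ (using the regeneration property at $r_1$ for the lower bound). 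Consequently $\mathcal{D}=\sum_{n\ge 1}D_n$, which already gives the easy direction $\sum_{n\ge 1}D_n\le\tau_2-\tau_1$.

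For the harder direction I compute $\mathcal{U}-\mathcal{D}$ as the total net displacement $\sum_{j=1}^{k}(p^{(j)}_{\tau_2}-p^{(j)}_{\tau_1})$. Nearest-neighbor continuity and first-hitting give $R_{\tau_i}=\max_{s\le\tau_i}X_s=r_i$ for $i=1,2$, so by Remark \ref{rem:Minproperties} the $k-1$ non-leftmost particles lie in $\{r_i,r_i+1\}$ at time $\tau_i$; therefore $\sum_{j}p^{(j)}_{\tau_i}=k r_i+\alpha_i$ for some $\alpha_i\in\{0,\ldots,k-1\}$. This yields
\[
\tau_2-\tau_1 \;=\; 2\mathcal{D}+k(r_2-r_1)+(\alpha_2-\alpha_1), \qquad |\alpha_2-\alpha_1|\le k-1,
\]
so it suffices to control $r_2-r_1$ by $\mathcal{D}$.

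The crux of the argument is to show $r_2-r_1\le \sum_{n\ge 1}D_n+1$. For each integer $p\in(r_1,r_2)$, the position $p$ is not a regeneration position, so by Remark \ref{re:reg_exchangability} some particle must jump from $p$ to $p-1$ at some time $t$; such a step requires $X_t=p$. By the same first-hitting and nearest-neighbor arguments, combined with the regeneration property at $r_2$, we have $X_s<r_1<p$ for $s<\tau_1$ and $X_s\ge r_2>p$ for $s\ge\tau_2$, so $X=p$ is possible only for $\tau_1<s<\tau_2$. This forces $D_{r_2-p}\ge 1$ for every integer $p\in(r_1,r_2)$, i.e.\ $D_n\ge 1$ for $n=1,\ldots,r_2-r_1-1$, giving $\sum_{n\ge 1}D_n\ge r_2-r_1-1$.

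Putting the three ingredients together yields
\[
\sum_{n\ge 1}D_n \;\le\; \tau_2-\tau_1 \;\le\; (k+2)\sum_{n\ge 1}D_n+(2k-1),
\]
which implies the desired equivalence of $m$-th moments. The step I expect to require the most care is the bound $r_2-r_1\le \mathcal{D}+1$: it is essential that the ``witness'' downcrossing at each intermediate $p\in(r_1,r_2)$ occurs strictly within $(\tau_1,\tau_2)$ rather than at some earlier time, and this depends on the nearest-neighbor nature of $X$ and on $\tau_1,\tau_2$ being first hitting times, not on any finer probabilistic input.
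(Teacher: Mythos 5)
Your proof is correct and follows essentially the same route as the paper's: the identity $\tau_2-\tau_1 = k(r_2-r_1)+2\sum_{n\ge1}D_n$ (up to your $\alpha_i$ correction, which in fact vanishes since at time $\tau_i$ all $k$ particles sit exactly at $r_i$), combined with the observation that $D_n\ge1$ for each intermediate position lest it be another regeneration position. Your extra care in locating the witness downcrossings strictly inside $(\tau_1,\tau_2)$ and in tracking particle positions at the regeneration times only makes explicit what the paper leaves implicit.
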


\begin{proof}
The number of upcrossings between $\tau_1$ and $\tau_2$ is
$k \cdot (x_2-x_1) + \sum_{n\ge 1}D_n$, since
the $k$ particles need to move from $x_1$ to $x_2$ and since each downcrossing needs to be
balanced by an upcrossing. Each step is either an
upcrossing or a downcrossing, therefore,
\begin{equation}\label{eq:kpick}
\tau_2-\tau_1 = k\cdot (x_2 - x_1) + 2\sum_{n\ge 1}D_n.
\end{equation}
For every $n\in\{x_1+1,\ldots,x_2-1\}$ $D_n\ge 1$, otherwise $n$ would be
another regeneration position. Hence, $x_2-x_1\le 1+ \sum_{n\ge 1}D_n$ and, by \eqref{eq:kpick},
\[2\sum_{n\ge 1}D_n\le \tau_2-\tau_1\le k+ (2+k) \sum_{n\ge 1}D_n.\]
This implies the claim.
\end{proof}

Next we will show that $D_n$ is a BPwM of the form \eqref{eq:BPwMformY}. As before, let $B_i,i\ge 1$, be a sequence of independent Bernoulli random variables so that $B_i\sim B(p_i), 1\le i\le M$ and $B_i\sim B(\frac{1}{2})$ for $i>M$.
Define
\begin{equation}\label{eq:nuJ2}
g_j(r)=\PP(\inf\{t\ge 1: \sum_{i=1}^t B_i = j\}-j=r)
\end{equation}
to be probability that the number of 0's prior to the first $j$ 1's equals $r$.
\begin{lemma}\label{lem:DandZhaveSameDistr}
The distribution of $D_n$ under $\AnnealedWalks$ is the same as the distribution of $Y_n$ under $\QuenchedCookies$, where $Y$ is defined to be a BPwM as in Theorem \ref{thm:KZBPwM} with $\nu_j\sim g_j(\cdot)$, $1\le j\le M$ and $N=M+k$.
\end{lemma}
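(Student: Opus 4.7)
The plan is to express $D_{n+1}$ as a deterministic function of $D_n$ together with the arrow sequence at the single site $x_2 - n - 1$; by (IID), arrow sequences at distinct sites are independent, so this produces a Markov chain whose one-step law I then match against the BPwM transition from Theorem \ref{thm:KZBPwM}.

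First I would carry out the combinatorial count. Since $x_2$ is a regeneration position, no particle ever crosses from $x_2$ to $x_2 - 1$, so $D_0 = 0$. For each $n$ with $x_2 - n - 1 \ge x_1$, at time $\tau_1$ all $k$ particles sit at $x_1 \le x_2 - n - 1$ and at time $\tau_2$ all $k$ sit at $x_2 \ge x_2 - n$; a net-flow count across the edge $\{x_2 - n - 1, x_2 - n\}$ therefore yields that the number of upcrossings during $[\tau_1, \tau_2]$ equals $D_n + k$. In the $k$-minimum scheduling only the leftmost particle moves, so each visit of the leftmost to site $x_2 - n - 1$ consumes the next arrow there: a right-arrow produces one upcrossing of $\{x_2 - n - 1, x_2 - n\}$, while a left-arrow produces one downcrossing of $\{x_2 - n - 2, x_2 - n - 1\}$, contributing one unit to $D_{n+1}$. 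Because at time $\tau_2$ every particle lies to the right of $x_2 - n - 1$, the last arrow consumed at this site during $[\tau_1, \tau_2]$ must be a right-arrow, so $D_{n+1}$ equals the number of left-arrows appearing before the $(D_n + k)$-th right-arrow in the sequence $a(x_2 - n - 1, \cdot)$.

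To turn this deterministic identity into a distributional statement I need the consumed prefix at $x_2 - n - 1$ to be \emph{fresh}, i.e., to start at index $1$. This is the main obstacle, since a priori a particle could have visited sites $\ge x_1$ before time $\tau_1$: the leftmost coordinate $X_t$ being below $x_1$ does not by itself prevent other particles from sitting above $x_1$. The remedy is to combine the regeneration structure with Theorem \ref{thm:exchangeability}: one replaces the $k$-minimum scheduling on $[0, \tau_1]$ by a proper scheduling that never drives any particle into $\{x \ge x_1\}\setminus\{x_1\}$ before time $\tau_1$ --- for example a sequential scheduling exploiting that $x_1$ is an optional regeneration position for each walker --- and by exchangeability the local-time counts that define the $D_n$ are preserved. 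With freshness in hand, (IID) and (BD) guarantee that the arrow sequence at $x_2 - n - 1$ is an independent mixed Bernoulli sequence (biases $p_1, \ldots, p_M$ in the first $M$ entries, Bernoulli$(\tfrac12)$ afterwards), so the law of $D_{n+1}$ given $D_n = j$ is precisely $g_{j+k}(\cdot)$. Splitting according to whether $j+k \le M$ or not and using that iid $\mathrm{Geom}(\tfrac12)$ gaps count the $0$'s between successive $1$'s in the tail, one sees that this is exactly the transition rule for $Y$ in Theorem \ref{thm:KZBPwM}. Together with the matching initial condition $D_0 = 0 = Y_0$, this yields the claimed equality in law of the two processes.
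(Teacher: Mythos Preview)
Your overall strategy---express $D_{n+1}$ as the number of left-arrows before the $(D_n+k)$-th right-arrow at site $x_2-n-1$, then invoke (IID) and match transitions---is exactly what the paper intends by ``similar to Lemma~\ref{lem:ZisBPwM}''. The combinatorial count in your first paragraph is correct.

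However, the ``freshness obstacle'' you identify is not an obstacle, and your proposed remedy is both unnecessary and incoherent. In the $k$-minimum scheduling only the \emph{leftmost} particle ever moves, so only the leftmost particle consumes arrows. Before time $\tau_1$ the leftmost particle is by definition at a position strictly below $x_1$; hence no arrow at any site $\ge x_1$ has been touched prior to $\tau_1$. It is true that non-active particles may already be \emph{sitting} at $x_1$ before $\tau_1$ (this is the content of Remark~\ref{rem:Minproperties}: the $k-1$ inactive particles accumulate at the running maximum of the minimum walk and the point to its right), but sitting at a site does not consume an arrow there. So the prefix of $a(x_2-n-1,\cdot)$ used between $\tau_1$ and $\tau_2$ automatically begins at index~$1$; there is nothing to fix. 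Your proposed fix---replacing the $k$-min scheduling on $[0,\tau_1]$ by a sequential one ``exploiting that $x_1$ is an optional regeneration position for each walker''---does not make sense: a sequential scheduling sends the first walker to $+\infty$, thereby consuming arrows at \emph{every} site $\ge x_1$, which is precisely what you claim to want to avoid; and ``optional regeneration position'' is defined for the $k$-mob, not for individual walkers.

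One genuine subtlety you pass over (and which the paper also elides by omitting the proof) is that the site $x_2-n-1$ is itself random, so independence of its arrow stack from $D_0,\ldots,D_n$ is not literally ``arrow stacks at distinct sites are independent''. The clean way around this is the regeneration structure: the post-$\tau_1$ walk, shifted so that $r_1=0$, is a $k$-min walk in a fresh (IID) environment conditioned never to hit $-1$, and under that law the backward recursion $W_{m}=\#\{\text{$0$'s before the $(W_{m+1}+k)$-th $1$ in }a(m,\cdot)\}$ identifies $(D_n)_n=(W_{x_2-n})_n$ with the BPwM excursion.
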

The proof is similar to the one of Lemma \ref{lem:ZisBPwM} and hence omitted.

\begin{lemma}\label{lem:gamma'}
$\gamma':=\sum_{r=o}^\infty r g_M(r)=M-\delta$.
\end{lemma}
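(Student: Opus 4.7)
The plan is to mirror the proof of Lemma \ref{lem:valueOfGamma} with the roles of 0's and 1's interchanged. By definition, $g_M(r)$ is the probability that in the sequence $B_1,B_2,\dots$, the number of 0's appearing before the $M$-th 1 equals $r$. Consequently $\gamma'$ is precisely the expected number of 0's seen before the $M$-th 1 in this mixed Bernoulli sequence.

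The key observation is that the non-i.i.d.\ portion of $(B_i)$ only involves the first $M$ trials. So I would decompose according to what happens in $B_1,\dots,B_M$: let $G$ denote the number of 0's among these, so that $M-G$ is the number of 1's already observed. To reach a total of $M$ ones, one must wait for an additional $G$ ones in the tail of the sequence, where $B_i\sim B(1/2)$ independently for $i>M$. The number of 0's collected during this waiting time is a negative binomial random variable $H$ with $H\mid G \sim NB(G,1/2)$, whose conditional mean equals $G$.

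Hence $\gamma' = \E[G]+\E[H] = 2\E[G] = 2\sum_{i=1}^M(1-p_i)$. Finally, under (BD) we have
\[
\delta \;=\; \sum_{i=1}^\infty (2p_i-1) \;=\; \sum_{i=1}^M(2p_i-1) \;=\; 2\sum_{i=1}^M p_i - M,
\]
so $M-\delta = 2M - 2\sum_{i=1}^M p_i = 2\sum_{i=1}^M(1-p_i) = \gamma'$, as desired.

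This is essentially a routine symmetry argument; the only point requiring care is to verify that the post-$M$ tail contributes exactly a $NB(G,1/2)$ number of 0's given $G$, which uses the memoryless structure of the $i > M$ block. I do not anticipate any serious obstacle — the computation is dual to the one already written out in Lemma \ref{lem:valueOfGamma}.
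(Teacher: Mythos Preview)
Your proof is correct and follows the same approach as the paper: both exploit the symmetry obtained by swapping the roles of 0's and 1's (equivalently, replacing $p_i$ by $1-p_i$) to reduce to the computation in Lemma~\ref{lem:valueOfGamma}. The only difference is cosmetic --- the paper invokes Lemma~\ref{lem:valueOfGamma} directly after the substitution $p_i\mapsto 1-p_i$, whereas you unpack the negative-binomial decomposition explicitly.
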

\begin{proof}
Exchange $p_i$ with $1-p_i$, $1\le i\le M$ and use Lemma \ref{lem:valueOfGamma} to get
$\gamma'=\sum_{r=o}^\infty r g_M(r)=2\sum_{i=1}^M(1-p_1)=M-\delta$
\end{proof}

We can now prove Theorem \ref{thm:PosSpeed}.

\begin{proof}[Proof of Theorem \ref{thm:PosSpeed}]
By \eqref{eq:first} it is enough to show that $\delta>k+1$ if and only if $E_0[\tau_2-\tau_1]<\infty$. By Lemma \ref{lem:mom}, the latter holds if and only if $\sum_{n\ge 1}D_n$ has a finite first moment. By Lemma \ref{lem:DandZhaveSameDistr} the latter holds if and only is the first moment of the total progeny of the process $Y$, defined in Lemma \ref{lem:DandZhaveSameDistr}, is finite. By Theorem \ref{thm:KZBPwM} this holds if and only if $\gamma' -M + N < - 1$, and by Lemma \ref{lem:gamma'} the latter holds if and only if $\delta>k+1$.
\end{proof}

\section{Speed for walkers in the leftover environments}\label{sec:SpeedLeftover}
In this section we shall prove Theorem \ref{thm:bddCaseSpeed}.
To do so we will give a formula relating the speeds of the walks on the left-over environments to the speeds of the $k$-minimum walks.
We will assume, wlog, that $\delta \geq 0$.
\begin{proof}[Proof of Theorem \ref{thm:bddCaseSpeed}]
Since we deal with several scheduling in this section, we shall denote by $X^{k-\min}$ the $k$-minimum walk on $a$ and by $X^{(i)}$ the walk on the $(i-1)$-leftover environment. By equation \eqref{eq:vmin} and Theorem \ref{thm:DLLN} these walks satisfy a law of large numbers, and we denote their speeds by $v^{k-\min}$ and $v_i$ respectively.

Fix some $k<\delta$.
 Let $\{r_n\}$ be the set of regeneration positions for the $k$-minimum walk, and $\tau^{(k-\min)}_n$ their hitting time by the walk. Remark \ref{re:reg_exchangability} tells us that the regeneration positions are independent of the chosen scheduling. This implies that $\{r_n\}$ are also regeneration positions for $X^{(i)}$ for all $1\leq i\leq k$ (though there may be other regeneration positions as well). Denote by $\tau^{(i)}_n$ the hitting time of $r_n$ by $X^{(i)}$.
Since these walks satisfy a law of large numbers it follows that $\PP$-a.s.\
 \begin{equation}\label{eq:vseqexist} v_i = \lim_{n\rightarrow\infty}\frac{r_n}{\tau^{(i)}_n} \ \text{ and } \  v^{(i-\min)} = \lim_{n\rightarrow\infty}\frac{r_n}{\tau^{(i-\min)}_n}\  \end{equation}
Given any proper $k$-scheduling $S$, define $L^{(S)}((-\infty,n])=\sum_{y=-\infty}^n L^{(S)}(y)$ - the total number of steps by particles in the interval $(-\infty,n]$. By the
 Exchangeability Theorem \ref{thm:exchangeability}, $L^{(S)}$ is independent of the choice of $S$, and may thus be denoted simply $L((-\infty,n])$.\footnote{In fact, one may use this to define the speed of a general $k$-mob walk on any $k$-right transient environment simply as $\lim_{n\rightarrow\infty}\frac{n}{L((-\infty,n])}$. However even for $1$ particle this limit may exist even when the regular speed does not}
 Comparing the sequential and minimum walks on $k$ particles, we get that for any regeneration position $r_n$
 \begin{equation}\label{eq:seq-min}
 L((-\infty,r_n]) = \tau^{(k-\min)}_n=\sum_{i=1}^k \tau_n^{(i)} \ \text{ and  } \tau^{(i)}_n=\tau^{(i-\min)}_n-\tau^{((i-1)-\min)}_n.
 \end{equation}
Since $\tau^{(i)}_n\ge r_n$ for any $i$, the second equality together with \eqref{eq:vseqexist} give that $v^{(i-\min)} \leq \frac{v^{((i-1)-\min)}}{v^{((i-1)-\min)}+1}$ and in particular the lsequence of speeds of the $i$-minimum walk is strictly decreasing in $i$ until it zeroes out.
Dividing $\tau^{(k-\min)}_n$ by $r_n$ and taking limits \eqref{eq:seq-min} gives that $v^{(k-\min)}>0$ if and only if $v_i>0$ for all $1\leq i\leq k$, and that in this case
\begin{equation*}
\frac{1}{v^{(k-\min)}} = \sum_{i=1}^k \frac{1}{v_i}
\end{equation*}
By Theorem \ref{thm:PosSpeed} $v^{(k-\min)}>0$ for $k<\delta-1$ and $v^{(\lfloor \delta-1\rfloor -\min)}=0$.
It follows by induction that $v_1=v^{(1-\min)}$, $v_{\lfloor \delta-1\rfloor}=0$ and for $2\leq i<\delta-1$
$$v_i = \frac{1}{\frac{1}{v^{(i-\min)}}-\frac{1}{v^{((i-1)-\min)}}} = \frac{ v^{(i-\min)}v^{((i-1)-\min)}} {v^{((i-1)-\min)}-v^{(i-\min)}}>0$$
With the last inequality following from the strict monotonicity of $v^{(i-\min)}$.
For $i> \delta-1$ we have $v_i=0$ by Theorem \ref{thm:bddCaseTransience} that the $(i-1)$-st leftover environment is a.s.\ recurrent.
This completes the proof of the theorem.
\end{proof}

%

\section{Stationary leftover environments}\label{sec:stationary}
As mentioned in the introduction, the leftover environments are not necessarily stationary even when the original environment is i.i.d.~since there is a special point - $0$ - at which the walker generating this environment started. Instead, the leftover environments inherit directional stationarity, as discussed in section \ref{sec:LLN}. In a private communication, Jonathon Peterson asked whether it is possible to introduce a ``stationary version" of the leftover environment such that the results of this paper would carry over to these environments.
In this section we describe a way to redefine leftover environments so that they inherit the (SE) property of the original environment.
To this end, we consider walks in which the walkers start in some general set of locations $x_1,\ldots,x_k\in \Z$ (in contrast with the rest of the paper where we assumed, mainly for ease of notation, that the walkers all started at $0$).
The following is a generalization of $k$-right transience, to include also initial positions:
\begin{defi}
We say that an arrow environment is $k$-right transient w.r.t.\ initial positions $x_1,\ldots, x_k$ if for a $k$-mob walk on $a$ with these initial positions all $k$ particles go to $+\infty$. We call an environment strongly $k$-right transient if it is transient to the right w.r.t.\ all $k$-tuples. These definitions go over to probability measures $\mu$ over cookie environments in the usual way, that is whenever they occur $\mu$-a.s.\
\end{defi}
Note that by the Exchangeability Lemma \ref{lem:exchangeabilty}, the choice of proper scheduling of the $k$-mob walk does not change the $k$-right transience property, nor its local time.
We can now define the leftover environment left by $k$ walkers with given initial positions:
\begin{defi}
Let $a$ be $k$-right transient w.r.t.\ some $x_1,\ldots,x_k\in \Z$. Let $L^{x_1,\ldots,x_k}(\cdot)$ denote the local time of some (any) $k$-mob walk on $a$ starting at initial positions $x_1,\ldots,x_k$. We define the leftover environment of $a$ generated by $k$ walkers starting at $x_1,\ldots , x_k$ by $$LO^{x_1,\ldots,x_k}(a)(y,i) := a(y,i+L^{x_1,\ldots,x_k}(y)).$$
\end{defi}


To define a stationary version of the leftover environment, we would like to take the starting points of the walkers to $-\infty$.
To do so, we need the following monotonicity lemma.

\begin{lemma}[Local time monotonicity in initial positions]\label{le:initial positions}
Let $a$ be a non-degenerate $k$-right transient arrow environment. Let $x_1,\ldots,x_k,y_1,\ldots,y_k\in\Z$ such that $x_i\leq y_i$ for al $1\leq i\leq k$. Let $X$ be some $k$-mob walk on the arrow environment $a$~with walkers at initial positions $x_1,\ldots,x_k$, let $Y$ be a $k$-mob walk on the arrow environment $a$~with walkers at initial positions $y_1,\ldots,y_k$, and denote their asymptotic local times by $L^X$ and $L^Y$ respectively. The following inequality holds: $$L^Y(z)\leq L^X(z)$$ for all $z\in \Z$. That is, moving the initial positions of the walkers to the right cannot increase local times of $k$-right transient arrow environments.
\end{lemma}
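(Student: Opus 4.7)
My plan is to reduce the lemma to a single-walker monotonicity statement via the Exchangeability Theorem, and then establish that $k=1$ statement by a coupling with a modified arrow environment.

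First I would reduce to the case where the two configurations $(x_1,\ldots,x_k)$ and $(y_1,\ldots,y_k)$ differ in exactly one coordinate by exactly one unit. Since the desired relation $L^Y(z)\le L^X(z)$ is transitive along a chain of configurations, I connect the two configurations by a sequence of unit upward steps along coordinate axes; the general statement then follows by composing the inequalities obtained at each step.

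Next, for the reduced case $y_{i_0}=x_{i_0}+1$ and $y_j=x_j$ for $j\ne i_0$, I would invoke the Exchangeability Theorem~\ref{thm:exchangeability} and fix, for both $X$ and $Y$, the $k$-sequential scheduling in which walker $i_0$ is moved \emph{last}. Since all walkers $j\ne i_0$ share the same initial positions $x_j=y_j$ and run on the common initial environment $a$, they produce identical trajectories in both $X$ and $Y$ and leave behind the same leftover environment $\tilde a$ for walker $i_0$. Their contributions to $L^X$ and $L^Y$ therefore coincide, and the entire difference $L^X-L^Y$ is carried by a single walker on $\tilde a$, started at $x_{i_0}$ in the $X$-case and at $x_{i_0}+1$ in the $Y$-case. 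This reduces the problem to the $k=1$ statement with a unit shift of the initial position on an arbitrary right-transient environment.

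For the $k=1$ case I would use the following coupling trick. Let $\tilde a$ be the underlying (right-transient) arrow environment and let $X,Y$ denote the single walkers on $\tilde a$ starting at $x_{i_0}$ and $x_{i_0}+1$. Introduce the modified environment $\tilde a'$ which coincides with $\tilde a$ everywhere except at $x_{i_0}$, where one prepends a $+1$ arrow: $\tilde a'(x_{i_0},1)=+1$ and $\tilde a'(x_{i_0},n)=\tilde a(x_{i_0},n-1)$ for $n\ge 2$. A walker $\hat X$ on $\tilde a'$ started at $x_{i_0}$ takes a forced step to $x_{i_0}+1$ at time $1$; from then on it uses the arrows $\tilde a(z,\cdot)$ at positions $z\ne x_{i_0}$ and the arrows $\tilde a(x_{i_0},1),\tilde a(x_{i_0},2),\ldots$ on its successive returns to $x_{i_0}$, i.e.\ exactly the arrows that $Y$ would use. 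Hence $L^{\hat X}(z)=L^Y(z)+\mathbf{1}[z=x_{i_0}]$, and the desired inequality $L^Y\le L^X$ becomes the arrow-environment monotonicity statement: inserting a $+1$ at the top of the stack at $x_{i_0}$ raises the local time at $x_{i_0}$ by at most $1$ and does not raise local times elsewhere. This is an analogue of the cookie-monotonicity results of Holmes--Salisbury~\cite{holmes2012combinatorial} and Peterson~\cite{peterson2012strict}, and I would establish it by a direct step-by-step coupling of $X$ on $\tilde a$ and $\hat X$ on $\tilde a'$.

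The main obstacle is the final monotonicity step. Once $\hat X$ returns to $x_{i_0}$ it is one arrow "behind" $X$ in the $x_{i_0}$-stack, and tracking the effect of this shift on subsequent excursions requires careful bookkeeping: after the coupled walks first diverge they use different arrows and their trajectories can differ substantially. The key simplification is that both walks are right-transient, so each makes only finitely many visits to every position; in particular $\hat X$ returns to $x_{i_0}$ only finitely often, and one can induct on the number of such returns. At each return one uses that the arrow $X$ effectively "saves" relative to $\hat X$ is the extra rightward arrow that $\hat X$ obtained for free at time $1$, so that $\hat X$'s trajectory in the left half-line $(-\infty,x_{i_0}-1]$ is dominated (excursion by excursion) by $X$'s.
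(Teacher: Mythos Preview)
Your reduction to the single-walker case---by moving one coordinate one unit at a time and using Exchangeability to send the other $k-1$ walkers off first---is exactly what the paper does. The divergence is in how you handle the resulting one-walker comparison (start at $x_{i_0}$ versus $x_{i_0}+1$ on a common right-transient environment $\tilde a$).

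The paper does this directly by an excursion decomposition, which is both simpler and complete. Write the walk $X^0$ started at $0$ (after shifting) as an alternating concatenation $L_0 R_1 L_1 R_2 \cdots L_{I-1} R_I$, where each $L_i$ is a ``left'' segment starting at $0$ and ending at the first hit of $1$, and each $R_i$ is a ``right'' segment starting at $1$ and ending at the first return to $0$; the last right segment $R_I$ is infinite. The key observation is that each $L_i$ depends only on the arrows at positions $\le 0$ and each $R_i$ only on the arrows at positions $\ge 1$. The walk $X^1$ started at $1$ therefore uses \emph{the very same} segments, just interleaved differently, and it omits exactly one left segment (the last one). Hence $L^{X^1}\le L^{X^0}$ pointwise.

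Your route through the modified environment $\tilde a'$ (prepending a $+1$ at $x_{i_0}$) is correct as a reduction: the identity $L^{\hat X}(z)=L^Y(z)+\mathbf{1}[z=x_{i_0}]$ holds, and the problem does become ``prepending a right arrow raises local time at that site by at most $1$ and nowhere else.'' But you have not proved this; your sketch (``induct on returns to $x_{i_0}$,'' ``$\hat X$'s left-half-line trajectory is dominated excursion by excursion'') is exactly the place where the work lies. If you carry it out honestly you will find yourself splitting both walks at successive visits to $x_{i_0}$ and $x_{i_0}+1$ and comparing the pieces---that is, you will rediscover the paper's excursion decomposition. So your approach is not wrong, but it takes a detour through a more general-sounding monotonicity statement whose cleanest proof is precisely the argument the paper gives directly.
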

\begin{proof}
Using induction and by shifting the arrow environment it is enough to prove the lemma for the case $x_i=y_i$ for $1\leq i\leq k-1$ and $x_k=0, y_k=1$.
As the local time for $X$ is everywhere finite, we may use the Exchangeability Lemma \ref{lem:exchangeabilty} to choose our favourite scheduling (with the initial positions $x_1,\ldots,x_k$). Thus we may ``first" send walkers $1$ to $k-1$ to infinity, and then deal with the $k$-th walker. The word ``first" is in parenthesis as we cannot really send them off to infinity as this would not be a proper scheduling, but we may essentially do so as in Lemma \ref{le:sequencial}, without this change effecting the path of the last walker. We are now left with a new ``leftover" arrow environment $a_1= LO^{x_1,\dots,x_{k-1}}(a)$, and to compare the local times of $X$ and $Y$ we need only to compare the local times of the walk starting from $0$ on $a_1$ with those of the walk started from $1$. We will refer to these two walks as $X^0$ and $X^1$. $k$-right transience of $a$ implies $X^0(n) \rightarrow \infty$ as $n\to\infty$.
Define the following sequences of times from $X^0$:
$l_0=0$ and for $i\geq 1$ let $$r_i=\inf \{n\geq l_{i-1}: \ X^0(r_i)=1\} \ \text{and} \ l_i=\inf \{n\geq r_i: \ X^0(l_i)=0\}$$
Let $L_i = \{X^0(t)\}_{{l_i} \leq t< r_{i+1}}$ be the $i$-th left-excursion of $X^0$ and similarly $R_i= \{X^0(t)\}_{r_i \leq t< l_i}$ - the $i$-th right-excursion of $X^0$. By right transience of $X^0$ there is some $I$ for which so that the $I$-th right excursion never ends, that is $r_I< \infty$ but $l_I=\infty$. The path of $X^0$ is just the concatenation \begin{equation}\label{eq:x0path} \{X^0\}=L_0R_1L_1R_2L_2\ldots L_{I-1}R_I. \end{equation}
Note that the left paths $L_i$ depend only on $(a(x,\cdot), x\le 0)$, the arrows on the non-positive integers. Symmetrically the right paths $R_i$ depend only on $(a(x,\cdot), x\ge 1)$, the arrows on the strictly positive integers. Hence the walk $X^1$ can be written as
\begin{equation}\label{eq:x1path}\{X^1\}=R_0L_0R_1L_1,...L_{I-1}R_I.\end{equation} (If $I=0$ then $X^0=L_0R_I$ and $X^0=R_0$.) Hence, the walker $X^1$ started at $1$ will not make the last left path $L_I$. In particular, we get the desired inequality for the local times..

\end{proof}
We get the following corollary for (SE) measures:
\begin{cor}\label{cor:strongtransience}
Let $P$ be probability measure over cookie environments satisfying (SE). $\PP$ is $k$-right transient w.r.t.\ some $x_1,\ldots , x_k$ if and only if it is strongly $k$-right transient.
\end{cor}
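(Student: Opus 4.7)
The ``if'' direction is immediate from the definitions. For the converse, the plan is to combine stationarity of the annealed arrow measure $\PP$ (inherited from $P$ via Lemma~\ref{le:arrows ergodic}) with the local-time monotonicity of Lemma~\ref{le:initial positions} to propagate $k$-right transience from one tuple of initial positions to any other, and then to exploit the countability of $\Z^k$ to obtain strong $k$-right transience.

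First I would invoke the stationarity of $\PP$: shifting the arrow environment by any integer $m$ preserves its law, so being $\PP$-a.s.\ $k$-right transient w.r.t.\ $(x_1,\ldots,x_k)$ is equivalent to being $\PP$-a.s.\ $k$-right transient w.r.t.\ $(x_1-m,\ldots,x_k-m)$ for every $m\in\Z$. Given a target tuple $(y_1,\ldots,y_k)$, I would then pick $m$ so that $x_i-m\le y_i$ for all $i$. This reduces the task to a purely deterministic statement: if a non-degenerate arrow environment $a$ is $k$-right transient w.r.t.\ $(x_1',\ldots,x_k')$ and $y_i\ge x_i'$ for each $i$, then $a$ is also $k$-right transient w.r.t.\ $(y_1,\ldots,y_k)$.

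For this deterministic core I would pass to the $k$-minimum walks. By the natural extension of Corollary~\ref{cor:seqRightTranEquivMinRightTan} to general initial positions, it suffices to show that the $k$-min walk $Y$ on $a$ from $(y_1,\ldots,y_k)$ is transient to the right. Lemma~\ref{le:initial positions} supplies the pointwise bound $L^Y(z)\le L^X(z)$, where $X$ is the $k$-min walk from $(x_1',\ldots,x_k')$; since $X$ is right transient, $L^X$ is finite everywhere and vanishes on $(-\infty,M_X-1]$, where $M_X$ is the smallest position ever reached by any walker of $X$. Hence $L^Y$ is also finite everywhere, so by Remark~\ref{rem:koneforallallforone} each walker of $Y$ tends to $\pm\infty$. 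The main (if minor) point I expect to have to address is ruling out drift to $-\infty$: a walker of $Y$ tending to $-\infty$ would visit and leave every sufficiently negative integer $z$, forcing $L^Y(z)\ge 1$ while $L^X(z)=0$, contradicting the monotonicity. Thus every walker of $Y$ tends to $+\infty$, so $Y$ is right transient. Finally, intersecting the resulting full-probability events over the countable set $\Z^k$ of target tuples yields a single event on which $a$ is $k$-right transient w.r.t.\ every $k$-tuple of initial positions, which is strong $k$-right transience.
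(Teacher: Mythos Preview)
Your proposal is correct and follows essentially the same route as the paper: shift one tuple by stationarity so that it is coordinatewise dominated by the other, then apply Lemma~\ref{le:initial positions}, and conclude by taking a countable intersection over $\Z^k$. The only cosmetic difference is that the paper shifts the target tuple $(y_i)$ to the right (setting $m=\max_i x_i-\min_i y_i$ so that $y_i+m\ge x_i$) whereas you shift the given tuple $(x_i)$ to the left; your version is slightly more explicit in deducing right transience from the local-time inequality and in making the countable-intersection step visible, both of which the paper leaves implicit.
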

\begin{proof}
Let $y_1,\dots,y_k,x_1,...,x_k\in \Z$. We want to show that if $\PP$ is $k$-right transient w.r.t.\ $x_1,...,x_k$ then it is also $k$-right transient w.r.t.\ $y_1,...,y_k$. Let $m=\max\{x_1,\ldots,x_k\}-\min\{y_1\ldots,y_k\}$, and consider a $k$-mob walk on $a$ with initial positions $y_1+m,\ldots,y_k+m$. By stationarity of $\PP$ the probability that $a$ is $k$-right transient w.r.t.\ $y_1+m,\ldots,y_k+m$ is equal to the probability that $a$ is $k$-right transient w.r.t.\ $y_1,\ldots,y_k$. As $y_i+m\geq x_i$ for all $i$, Lemma \ref{le:initial positions} ensures that if $a$ is $k$-right transient w.r.t.\ $x_1,\ldots,x_k$ then it is also $k$-right transient w.r.t.\ $y_1+m,\ldots,y_k+m$. Hence $a$ is $\PP$-a.s.\ $k$-right transient w.r.t.\ $y_1,\ldots,y_k$. As $y_1,\ldots,y_k$ were arbitrary, this concludes the proof.
\end{proof}
%

We may now give a meaning to taking the initial positions to $-\infty$. For $m\in\Z$ denote by $m^{(k)}$ the constant sequence $x_1,...,x_k$ with values $m$.
The main observation is the following:
\begin{lemma}\label{le:MovingLeftDoesntMatter}
Let $a$ be a strongly $k$-right transient arrow environment. $LO^{x_1,\ldots,x_k}(a)(x,i) = LO^{m^{(k)}}(x,i)$ for all $x\geq m$ and $i\geq 1$, whenever $x_1,\ldots,x_k<n$.
That is, the $k$-leftover environment to the right of $n$ remains the same for any choice of starting points to the left of $n$.
\end{lemma}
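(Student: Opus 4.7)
The plan is to reduce the statement via the Exchangeability Theorem~\ref{thm:exchangeability} to a single-walker comparison and then induct on $k$. Since $LO^{\vec x}(a)(y,i) = a(y,\, i + L^{\vec x}(y))$, where $L^{\vec x}$ denotes the (scheduling-independent) asymptotic local time of a $k$-mob walk with initial positions $\vec x$, the desired equality at $y \geq m$ is equivalent to $L^{x_1,\ldots,x_k}(y) = L^{m^{(k)}}(y)$ for all $y \geq m$, and strong $k$-right transience ensures both local times are everywhere finite.

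The main ingredient is the following single-walker comparison: if $a$ and $a'$ are arrow environments that agree at every position $\geq m$, and $X$ is a walker on $a$ starting at some $x \leq m$ while $X'$ is a walker on $a'$ starting at some $x' \leq m$, both transient to $+\infty$, then $L^X(y) = L^{X'}(y)$ for every $y \geq m$. To prove this I would let $T$ and $T'$ be the first hitting times of $m$ by $X$ and $X'$, both finite by right-transience; before these times the walkers stay strictly to the left of $m$, so none of the shared arrows at $\geq m$ are touched. At times $T$ and $T'$ each walker visits $m$ for the first time and consumes the same arrow $a(m,1) = a'(m,1)$. Proceeding by induction on the visit number $n$ to $m$: at the $n$-th visit both walks use $a(m,n) = a'(m,n)$; if this arrow is $+1$ they begin identical excursions into $[m+1,\infty)$ (consuming the same arrows at every $y \geq m+1$ in the same order) which either escape to $+\infty$ or return to $m$; if it is $-1$ both step to $m-1$ and perform (possibly different) left-excursions, but by right-transience they both return to $m$ and then face the same next unused arrow $a(m,n+1)$. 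The conclusion is that the same arrows are consumed at each $y \geq m$.

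With this comparison in hand, I would prove the general statement by induction on $k$, using the sequential scheduling guaranteed by Theorem~\ref{thm:exchangeability} and Lemma~\ref{le:sequencial}. The base case $k=1$ is the lemma above (applied with $a = a'$). For the inductive step: after walker $1$ is sent to $+\infty$ starting at $x_1$ versus $m$, the lemma applied with $a' = a$ shows that the two resulting leftover environments agree at all positions $\geq m$; strong $k$-right transience of $a$ together with the monotonicity Lemma~\ref{le:initial positions} implies that walker $2$, started at $x_2 \leq m$ or at $m$, is itself transient to $+\infty$ on either of these leftover environments; a further application of the single-walker comparison to these two environments (which agree at $\geq m$) with starts $x_2$ and $m$ yields equality of the next leftover environments at $\geq m$, and iterating through walkers $3,\ldots,k$ closes the induction.

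The main obstacle is the single-walker comparison, specifically the induction on visits to $m$. One must argue carefully that the left-excursions, which may occur in different environments and in any case have different histories at $< m$, do not perturb the sequence of arrows consumed at $\geq m$. The essential observation is that each such left-excursion is a finite detour returning to $m$ by non-degeneracy (Remark~\ref{rem:IfForOnLIsInfThenForAllLIsInf}) combined with right-transience, and upon returning to $m$ the walker consumes the next unused arrow there, whose index depends only on the number of prior visits to $m$ and not on the details of the intervening excursion.
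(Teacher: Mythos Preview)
Your proof is correct, but considerably more elaborate than the paper's two-sentence argument. The paper argues directly for the full $k$-mob walk: by strong $k$-right transience, any walker that drops below $m$ eventually returns to $m$, and since such left-excursions touch no arrows at positions $\geq m$, the asymptotic local time on $[m,\infty)$ is a function only of $a|_{[m,\infty)}$ and of the fact that $k$ walkers eventually enter from the left --- which is the same regardless of the precise starting points $x_1,\ldots,x_k\leq m$. Your route, reducing to a single-walker comparison via sequential scheduling and then inducting on $k$, is really an unpacking of this same observation one walker at a time; the single-walker comparison you state (that two right-transient walks on environments agreeing at $\geq m$, started anywhere $\leq m$, consume the same arrows at $\geq m$) is the heart of the matter in both arguments, and your induction on visits to $m$ is the right way to make it rigorous. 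Two minor remarks: the appeal to Lemma~\ref{le:initial positions} in the inductive step is unnecessary, since strong $k$-right transience applied to the tuples $(x_1,\ldots,x_k)$ and $m^{(k)}$ already guarantees right-transience of each sequential walker directly; and your single-walker comparison is slightly more general than what the paper uses (allowing different environments below $m$), which is exactly what makes your induction close.
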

\begin{proof}
Fix $x_1,...,x_k$. Every time any of the walker is to the left of $m$ it will eventually reach $m$ (as $a$ is $k$-right transient w.r.t.\ $x_1,...,x_k$). But the asymptotic local time of the $k$-mob walk on $[m,\infty)$ depends only on the restriction of $a$ to $[m,\infty)$, which concludes the proof.
\end{proof}

This allows us to define the leftover environment left by $k$ walkers ``starting from $-\infty$"
\begin{defi}
Given a $k$-right transient arrow environment $a$, the stationary $k$-leftover environment $LO^{k\text{-limit}}(a)$ is defined by $LO^{k\text{-limit}}(a)(x,i) = LO^{x^{(k)}}(a)(x,i)$ for all $x\in \Z$ and $i\geq 1$.
\end{defi}

Let $P$ be a measure over cookie environments, and recall that $\PP$ is the measure over arrow environments associated to $P$. We denote by $\plo^{x_1,\dots,x_k}$ the pushforward measure obtained from $\PP$ by the map $\varphi^{x_1,\ldots,x_k}:a\rightarrow LO^{x_1,\ldots,x_k}(a)$, and by $\plo^{k\text{-limit}}$ the pushforward measure obtained from $\PP$ by the map $\varphi^{k\text{-limit}}: a\rightarrow LO^{k\text{-limit}}(a)$.

\begin{lemma}\label{lem:stationarizationIsStationary}
Let $P$ be (SE), (ND), (WEL), or (ELL) probability measure over cookie environments. If $\PP$ is $k$-right transient, then $\plo^{k\text{-limit}}$ is also (SE), (ND), (WEL), or (ELL) respectively.
\end{lemma}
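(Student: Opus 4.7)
The plan is that all four inheritance claims reduce to two structural features of the map $\Phi := \varphi^{k\text{-limit}}$: it commutes with the left shift $\theta$ on $\Z$, and on each site $x$ it replaces the cookie stack $\om(x,\cdot)$ by a genuine tail of itself. I would establish these two facts and then deduce the four properties in parallel.

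For shift commutation, fix $x\in\Z$. Any $k$-mob walk on $\theta(a)$ from initial positions $x^{(k)}$ becomes, after translation by $+1$, a $k$-mob walk on $a$ from initial positions $(x+1)^{(k)}$, hence
\[
L^{x^{(k)}}_{\theta(a)}(x) = L^{(x+1)^{(k)}}_a(x+1).
\]
Substituting into the definition of $\Phi$ yields $\Phi(\theta(a))(x,i) = a(x+1,\,i + L^{(x+1)^{(k)}}_a(x+1)) = \theta(\Phi(a))(x,i)$. Under (SE), Corollary \ref{cor:strongtransience} upgrades the hypothesis to strong $k$-right transience, so this identity holds pointwise on a shift-invariant set of full $\PP$-measure. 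Stationarity of $\plo^{k\text{-limit}} = \Phi_* \PP$ then follows from stationarity of $\PP$ (Lemma \ref{le:arrows ergodic}) together with the shift-commutation. For ergodicity, any shift-invariant Borel event $A$ in the leftover environment space pulls back to a shift-invariant event $\Phi^{-1}(A)$, which by ergodicity of $\PP$ (again Lemma \ref{le:arrows ergodic}) is $\PP$-trivial; hence $A$ is $\plo^{k\text{-limit}}$-trivial.

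For (ND), (WEL), and (ELL), the identity $\Phi(a)(x,n) = a(x,\,n + L^{x^{(k)}}(x))$ together with the a.s.\ finiteness of $L^{x^{(k)}}(x)$ (which is precisely $k$-right transience with respect to $x^{(k)}$) exhibits each leftover stack as a tail of the original at the same site. Divergence of both $\sum_i \om(x,i)$ and $\sum_i(1-\om(x,i))$ in (ND), and the entry-wise strict inclusion $\om(x,n)\in(0,1)$ in (ELL), are invariant under dropping finitely many terms, and are therefore automatically preserved. For (WEL), the elementary inclusion $\{\om(x,n)>0\ \forall n\} \subseteq \{\om(x,\,L^{x^{(k)}}(x)+n)>0\ \forall n\}$, and its $<1$ analogue, transfer positive $P$-probability to positive $\plo^{k\text{-limit}}$-probability.

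The principal delicate point is measure-theoretic bookkeeping rather than conceptual: the map $\Phi$ is only defined pointwise on the set of (strongly) $k$-right-transient environments, which is a $\PP$-full-measure subset, and the shift-commutation identity must accordingly be interpreted modulo a shift-invariant null set. This is most cleanly handled by working on the joint coupling of cookies with the i.i.d.\ uniform random variables that are used in Lemma \ref{le:arrows ergodic} to derive arrows from cookies, on which $\Phi$ extends to a genuinely deterministic measurable map commuting with the shift and whose push-forward onto the cookie coordinate is exactly $\plo^{k\text{-limit}}$.
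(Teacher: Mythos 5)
Your proposal is correct and follows essentially the same route as the paper: the pointwise properties (ND), (WEL), (ELL) are immediate because each leftover stack is a tail of the original stack, and (SE) comes from shift-covariance of the limit leftover map (your explicit identity $\Phi\circ\theta=\theta\circ\Phi$ is just a cleaner packaging of the paper's comparison of $\plo^{k\text{-limit}}$ with $\plo^{(-n)^{(k)}}$ via Lemma \ref{le:MovingLeftDoesntMatter} and stationarity of $\PP$, and your pullback of invariant events is equivalent to the paper's ``factor of an ergodic system is ergodic'' step). The one point you flag but do not carry out --- measurability of $\varphi^{k\text{-limit}}$ --- is handled in the paper by writing $\varphi^{x_1,\ldots,x_k}$ as an a.s.\ pointwise limit of continuous finite-time maps and noting that $\varphi^{k\text{-limit}}$ agrees with $\varphi^{(-r)^{(k)}}$ on events depending only on coordinates in $[-r,r]$; your coupling-space alternative would work equally well.
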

\begin{proof}

(ND), (WEL), and (ELL) are straightforward. Assume that $P$ satisfies (SE). To show stationarity, it is enough to show that $\plo^{k\text{-limit}}(A) = \plo^{k\text{-limit}}(\theta A)$ for any event $A$ depending only on the arrows above finitely many positions, where $\theta$ is the left shift. Assume that $A$ depends only on the arrows $\{a(x,\cdot), x\in [-m,m]\}$. Then for $n<-m$ we have
$$ \plo^{k\text{-limit}}(A)  = \plo^{(-n)^{(k)}}(A) = \plo^{(-n+1)^{(k)}}(\theta A) = \plo^{k\text{-limit}}(\theta A) $$
Where the first and last equalities follows from Lemma \ref{le:MovingLeftDoesntMatter}, and the middle equality follows from stationarity of $\PP$.
Ergodicity will follow from stationarity once we show the map $\varphi^{k\text{-limit}}:a\rightarrow \plo^{k\text{-limit}}$ is measurable, as any factor of an ergodic system is ergodic (see e.g.\ Lemma 2.1 of \cite{amir2013zero}).
To show measurability of this map, we first note that the maps $\varphi^{x_1,\ldots,x_k}$ are measurable. Indeed, by $k$-right transience, they are a.s.\ the pointwise limit of the functions $\varphi_t^{x_1,\ldots,x_k}(a)(x,i) := a(x,i+L_t^{(k-\min)}(x))$, the environment leftover after $t$ steps of the min-$k$ walk on $a$ with initial positions $x_1,\ldots,x_k$. The latter are actually continuous, as they are the composition of $t$ single-step functions.
To show that $\varphi^{k\text{-limit}}$ is measurable it is enough to show that for any $r\in \N$ and any event $A$ depending only on the arrows $\{a(x,\cdot),\ x\in [-r,r]\}$, there exists a measurable set $C$ s.t. $\PP(B\triangle C)=0$, where $B$ is the inverse image of $A$, that is $B:=\big(\varphi^{k\text{-limit}}\big)^{-1}(A)$. Choose $C:=\big(\varphi^{(-r)^{(k)}}\big)^{-1}(A)$, then measurability of $\varphi^{(-r)^{(k)}}$ implies that $C$ is measurable, and by Lemma \ref{lem:stationarizationIsStationary} we have $\PP(B\triangle C)=0$ as $\varphi^{k\text{-limit}}$ and $\varphi^{(-r)^{(k)}}$ agree on $A$ for any $k$-right transient arrow environment.
 \end{proof}

\begin{prop}
Let $P$ be a (SE) and (ND) measure over cookie environments. The following are equivalent:
\begin{enumerate}
\item $P$ is strongly $(k+1)$-right transient.\\
\item For some $x_1,\ldots,x_k\in \Z$, $\plo^{x_1,\dots,x_k}$ is transient to the right. \\
\item $\plo^{k\text{-limit}}$ is transient to the right
\end{enumerate}
More so, when the above clauses hold, the speed of the walker on $LO^{x_1,\dots,x_k}(a)$ and $LO^{k\text{-limit}}(a)$ is a.s.\ equal.
\end{prop}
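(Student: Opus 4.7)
The plan is to reduce each of (2) and (3) to the statement that $P$ is $(k+1)$-right transient with respect to specific initial positions, and then invoke Corollary \ref{cor:strongtransience}. For (1) $\Leftrightarrow$ (2), apply Lemma \ref{le:sequencial} to the $(k+1)$-mob walk on $a$ with initial positions $(x_1,\ldots,x_k,0)$: there is a proper scheduling under which the first $k$ walkers transform $a$ into $LO^{x_1,\ldots,x_k}(a)$ and the $(k+1)$-th walker becomes a walker on this leftover environment started at $0$. Since joint transience of all walkers is invariant under proper scheduling (Theorem \ref{thm:exchangeability}), (2) is equivalent to $P$ being $(k+1)$-right transient with respect to $(x_1,\ldots,x_k,0)$, which Corollary \ref{cor:strongtransience} identifies with strong $(k+1)$-right transience, i.e.~(1).

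For (1) $\Rightarrow$ (3), I would couple $\tilde w$, the walker on $LO^{k\text{-limit}}(a)$ from $0$, with $w_m$, the walker on $LO^{m^{(k)}}(a)$ from $0$, and let $m\to-\infty$. By Lemma \ref{le:MovingLeftDoesntMatter}, these two arrow environments agree on $[m+1,\infty)$, so the two walkers follow identical paths until one of them first visits $m$. Under (1), $w_m$ is a.s.\ transient to the right; using stationarity of $\PP$ to shift the mob by $-m$, $\PP(w_m\text{ visits }m)$ equals the probability that a walker on $LO^{0^{(k)}}(a)$ started at $-m$ reaches $0$. Combined with the directional stationarity and ergodicity of $LO^{0^{(k)}}$ (Lemma \ref{lem:directionalSEleftover}) and (2) (equivalent to (1)), this probability tends to $0$ as $-m\to+\infty$, and so $\PP(\tilde w\to+\infty)\geq \PP(w_m\text{ avoids }m)\to 1$. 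For (3) $\Rightarrow$ (1), define $A_N$ to be the event that all $(k+1)$ walkers of the mob started at $((-N)^{(k)},0)$ go to $+\infty$. A $(k+1)$-walker version of Lemma \ref{le:initial positions} (moving only the first $k$ walkers rightward) yields $A_N\subseteq A_{N_0}$ for $N\geq N_0$, so $\PP(A_N)$ is non-increasing. On the event $\{\min\tilde w\geq -N+1\}$, whose probability tends to $1$ since (3) makes $\min\tilde w$ a.s.\ finite, the entire trajectory of $\tilde w$ lies in the common region of $LO^{(-N)^{(k)}}(a)$ and $LO^{k\text{-limit}}(a)$, so the $(k+1)$-th walker of the mob coincides with $\tilde w$ and hence is transient to the right. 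Combining $\PP(A_N)\to 1$ with monotonicity forces $\PP(A_N)=1$ for every $N$, yielding (1) via Corollary \ref{cor:strongtransience}.

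For the equal speed claim, both walkers are transient to the right by the equivalences above, so by Theorem \ref{thm:ZernerLLN} each has an a.s.\ deterministic speed, say $v$ and $\tilde v$. Pick $N>\max_i x_i$, so that by Lemma \ref{le:MovingLeftDoesntMatter} the two arrow environments agree on $[N,\infty)$. Let $T_N<\infty$ be the first hitting time of $N$ by $w$. On the event $E_N$ that $w$ never returns below $N$ after $T_N$, the post-$T_N$ trajectory of $w$ uses only arrows in $[N,\infty)$, so it coincides (with matching arrow indexing) with that of a walker on $LO^{k\text{-limit}}(a)$ starting at $N$; the stationarity of $LO^{k\text{-limit}}$ (Lemma \ref{lem:stationarizationIsStationary}) and Theorem \ref{thm:ZernerLLN} then give this walker a.s.\ speed $\tilde v$. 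Since transience of $w$ forces $\PP(E_N)\to 1$ as $N\to\infty$, we conclude $v=\tilde v$ a.s.

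The main obstacle is justifying the decay $\PP(w_m\text{ visits }m)\to 0$ in the (1) $\Rightarrow$ (3) step. After the stationarity shift, this reduces to showing that a walker started at a far-right position on the right-directionally stationary, right-transient environment $LO^{0^{(k)}}$ returns to the origin with vanishing probability. I would adapt Kosygina--Zerner style reasoning (cf.\ Corollary \ref{cor:T.infinite.Vs.X.transient} and Lemma \ref{le:inf_opt_reg}): right-transience of the walker at $0$ together with (SE) forces $\PP_0(T_{-1}=\infty)>0$, from which a geometric decay of the hitting probability of $0$ from $-m$ follows via iterated regeneration positions.
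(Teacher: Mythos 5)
Your overall architecture matches the paper's: (1)$\Leftrightarrow$(2) via Corollary \ref{cor:strongtransience}, (1)$\Leftrightarrow$(3) via the agreement of $LO^{m^{(k)}}(a)$ and $LO^{k\text{-limit}}(a)$ on $[m,\infty)$ (Lemma \ref{le:MovingLeftDoesntMatter}) together with the local-time monotonicity Lemma \ref{le:initial positions}, and the speed claim via agreement of the environments to the right of a level the walker never recrosses. Your (3)$\Rightarrow$(1) is a nice variant: the paper stops at ``probability $>\tfrac12$'' and invokes the 0--1 law (Theorem \ref{thm:TransienceThreshold}), whereas your monotonicity of $\PP(A_N)$ in $N$ combined with $\PP(A_N)\to 1$ forces $\PP(A_N)\equiv 1$ directly; both are fine.

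The genuine gap is exactly where you flag it, in (1)$\Rightarrow$(3). Under (SE) alone there is no independence between blocks of the environment, so you cannot iterate regeneration positions to get \emph{geometric} decay of the return probability; that mechanism belongs to the (IID) setting. What you actually need is only that the probability tends to $0$, and the available tool is the ergodic theorem: optional regeneration positions have a.s.\ positive density (Lemma \ref{le:inf_opt_reg}), so with probability $\ge 1-\epsilon$ a long interval contains one, and a walker starting to its right never crosses it. The second, subtler point is \emph{where} to run this density argument. You run it on the leftover environment $LO^{0^{(k)}}$, which is only \emph{directionally} stationary and ergodic, so Lemma \ref{le:inf_opt_reg} does not apply as stated, and you would also need a single-walker analogue of Lemma \ref{le:initial positions} to conclude that a walker started at $n$ cannot cross an optional regeneration position of the walker started at $x\in(0,n)$. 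The paper sidesteps both issues by applying the ergodic theorem to the $(k+1)$-mob walk on the \emph{original} environment $a$, where full stationarity of $\PP$ is available, locating a mob regeneration position in $[-m,0]$ with probability $\ge 1-\epsilon$, and then using Lemma \ref{le:initial positions} once to transfer the blocking property to the configuration where the last walker starts at $0$. Your argument can be repaired along these lines, but as written the decay step is not justified.
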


\begin{proof}
We may assume $P$ is $k$-right transient, otherwise all clauses fail trivially. The equivalence of (1) and (2) follows directly from Corollary \ref{cor:strongtransience}. To see that (1) implies (3), consider a $(k+1)$-walk with all $k+1$ walkers starting at $0$. Corollary \ref{cor:T.infinite.Vs.X.transient} and Lemma \ref{le:inf_opt_reg} imply that there are a.s.\ a positive density of regeneration positions, i.e. infinitely many positions $x>0$ for which the directed edge $(x,x-1)$ is never crossed. Therefore for any $\epsilon>0$ there is some $m>0$ such that with probability $\geq 1-\epsilon$ there is some regeneration position in the interval $[0,m]$. Consider now a $(k+1)$-mob walk $X$ with all walkers starting at $-m$. Stationarity of $P$ ensures that with probability $\geq 1-\epsilon$ there exists a position $x\in [-m,0]$ for which the local time is $k+1$. Sample an arrow environment $a$ from the induced measure $\PP$, which we may assume to be $(k+1)$-right transient. Last, consider a walk $Y$ on $a$ where $k$ particles start at $-m$ and the last particle starts at $0$. Lemma \ref{le:initial positions} tells us that $L^Y(x)\leq L^X(x)$, which is equivalent to the statement that the number of times each directed edge is crossed in $Y$ is less or equal to the number of times it is crossed in $X$ (as the number of crossings of the edge $(x,x\pm 1)$ is simply the number of right/left arrows in the first $L^X(x)$ arrows above $x$). In particular if $x\in [-m,0]$ is a regeneration position then the directed edge $(x,x-1)$ is never crossed in $Y$, and therefore the walker starting at $0$ will never reach $x-1$. Let $A_m$ be the event that a walker starting at $0$ will go to $+\infty$ without reaching $-m-1$. Then the above discussion shows that for any $\epsilon$ there is an $m$ s.t. $\plo^{(-m)^{(k)}}(A_m) > 1-\epsilon$. As this event depends only on the environment above $[-m,\infty)$, Lemma \ref{le:MovingLeftDoesntMatter} gives $\plo^{k\text{-limit}}(A_m)=\plo^{(-m)^{(k)}}(A_m) \geq 1-\epsilon$. As $\epsilon$ was arbitrary, this gives that $\plo^{k\text{-limit}}$ is transient to the right.

(In fact, once we know that the probability of going to $\infty$ is positive, this also follows from the 0-1 law for directional transience - Theorem \ref{thm:ABO}).
To get that (3) implies (2), note that right-transience of $\plo^{k\text{-limit}}$ is equivalent to $\plo^{k\text{-limit}}(A_m)\rightarrow 1$ as $m\rightarrow \infty$. Taking $m$ large enough so that $\plo^{k\text{-limit}}(A_m)>\frac12$, and using again Lemma \ref{le:MovingLeftDoesntMatter} we get $\plo^{(-m)^{(k)}}(A_m) >\frac12$. By Lemma \ref{le:initial positions} moving the starting points to $0$ cannot increase local time, and therefore $\PP(a \text{ is $(k+1)$-right transient w.r.t. } 0^{(k+1)})>\frac12$. Theorem \ref{thm:TransienceThreshold} now gives that $a$ is $(k+1)$-right transient w.r.t.\ $0^{(k+1)}$ $P$-a.s.\

Last, to get the statement on speeds we argue that conditioned on $A_m$ the speed depends only on the environment above $[-m,\infty)$, which is identical under $LO^{x_1,\dots,x_k}(a)$ and $LO^{k\text{-limit}}(a)$ for any $(k+1)$-right transient $a$. Since when (1)-(3) hold, with probability $1$ $A_m$ holds for some $m$, the result follows.
\end{proof}

The last Lemma allows us to transfer the results in the other sections of this paper to (SE) leftover environments.
We sum this up in the next corollary.
Given a measure $P$ over cookie environments, let $X_{se}^{(k)}$ denote the excited random walk on an environment sampled according to $\plo^{(k-1)\text{-limit}}$.

\begin{cor}
Theorems \ref{thm:TransienceThreshold}, \ref{thm:bddCaseTransience}, \ref{thm:ZernerLLN} and \ref{thm:bddCaseSpeed} hold with $X^{(k)}$ replaced by $X_{se}^{(k)}$.
More so, the values of $R$ and $v_k$ are invariant under this substitution. In particular, for probability measures over cookie environments which are (IID), (BD), (WEL) and with $\delta>3$, the walker $X^{(2)}$ on the leftover environment is an example of ERW is a stationary ergodic environment with positive speed, as promised in the introduction.
\end{cor}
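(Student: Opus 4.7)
The strategy is to leverage the Proposition preceding this Corollary, which furnishes the key equivalences: $X_{se}^{(k)}$ is transient to the right iff $P$ is strongly $k$-right transient, and in the transient regime $X_{se}^{(k)}$ has a.s.\ the same speed as $X^{(k)}$. Combined with Lemma \ref{lem:stationarizationIsStationary}, which ensures that $\plo^{(k-1)\text{-limit}}$ inherits (SE) (as well as (ND), (WEL), or (ELL)) from $P$, the Corollary reduces to transporting each of the four target theorems across these equivalences -- we do not reapply them to the stationary leftover directly.

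First, for the analog of Theorem \ref{thm:TransienceThreshold}: by the Proposition combined with Corollary \ref{cor:strongtransience}, $X_{se}^{(k)}$ is right-transient iff $P$ is $k$-right transient from $0^{(k)}$, and by Corollary \ref{cor:seqRightTranEquivMinRightTan} the latter is equivalent to $X^{(1)}, \ldots, X^{(k)}$ all being transient to the right, i.e., $k \leq R$; the left-transient case is symmetric, so the same threshold $R$ governs $X_{se}^{(k)}$. Theorem \ref{thm:bddCaseTransience} then transfers immediately, using that (IID) $\Rightarrow$ (SE) and (BD) $\Rightarrow$ (ND) so that the Proposition applies in the (IID)(BD)(WEL) regime. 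For Theorem \ref{thm:ZernerLLN}, Lemma \ref{lem:stationarizationIsStationary} makes $\plo^{(k-1)\text{-limit}}$ stationary and ergodic, so Theorem \ref{thm:ZernerLLN} applied to a single ERW on this environment yields some asymptotic speed $v$; the speed clause of the Proposition identifies $v$ with $v_k$. The analog of Theorem \ref{thm:bddCaseSpeed} follows at once from the original criterion applied to $v_k = v$.

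Finally, for the ``in particular'' example, when $P$ satisfies (IID), (BD), (WEL) and $\delta > 3$, the walker $X^{(2)}$ is ballistic (by Theorem \ref{thm:bddCaseSpeed} and the discussion following it), so by the speed equality $X_{se}^{(2)}$ also has positive speed, while Lemma \ref{lem:stationarizationIsStationary} ensures its environment $\plo^{1\text{-limit}}$ is (SE) but not (IID) -- yielding a genuinely non-i.i.d.\ ballistic example as promised. The main point to verify carefully is that the hypotheses of the Proposition (chiefly the strong $k$-right transience needed to define $\plo^{(k-1)\text{-limit}}$) hold at every stage; in the transient regime this is supplied by Corollary \ref{cor:strongtransience}, while in the recurrent regime the target conclusions are either vacuous or fall under the convention that the relevant leftover environment is the trivial $\frac{1}{2}$-environment.
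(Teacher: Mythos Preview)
Your proposal is correct and follows precisely the route the paper intends: the paper does not supply an explicit proof of this Corollary, stating only that ``the last Lemma allows us to transfer the results in the other sections of this paper to (SE) leftover environments,'' and your argument is the natural unpacking of that sentence via the preceding Proposition (for the transience and speed equivalences) together with Lemma~\ref{lem:stationarizationIsStationary} (for (SE)). The index shift you make---applying the Proposition with $k-1$ in place of $k$ so that transience of $\plo^{(k-1)\text{-limit}}$ matches strong $k$-right transience of $P$---and the handling of the recurrent case by convention are both appropriate.
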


\section{Concluding remarks and open problems}\label{sec:Remarks}

\subsection{Remarks}
\begin{enumerate}
\item The main purpose of this paper is to introduce the $k$-particle picture and some technique of dealing with it.
 There is an extensive research in the field for the case of one walker in environments which satisfy (BD) and (IID), for example Kosygina-Zerner \cite{kosygina2008positively} and \cite{kosygina2012excited} (transience versus recurrence, ballisticity, CLT), Basdevant-Singh \cite{basdevant2008speed} and \cite{basdevant2008rate} (ballisticity and asymptotic rate of diffusivity), Peterson \cite{peterson2012large} and \cite{peterson2012strict} (law of large deviation, slow-down phenomenon, and strict monotonicity results), Rastegar-Roitershtein \cite{rastegar2011maximum} (maximum occupation time) and Dolgopyat-Kosygina \cite{dolgopyat2011central}, Kosygina-Mountford \cite{kosygina2011limit} and Kosygina-Zerner \cite{kosygina2013excursions} (limit laws).
 In this paper we focused on generalizing results regarding transience vs.\ recurrence and positive speed for one walker on such environments to $k$ excited walkers. We believe that by pushing the proofs of other results through the machinery described in this paper, many other results could be generalized to the $k$-particle picture.
\item The arguments in Sections $2$ and $3$ connect the transience of the process $Z^+$ and the right transience of the walkers on the leftover environments under the assumptions (SE) and (ELL). Thus if one is able to give criterions for transience of $Z^+$, one gets criterions for transience for walks on the leftover environments. When the environment is (IID) and (BD) there is an exact criterion for transience of $Z^+$ in terms of $\delta$ by viewing $Z^+$ as a branching processes with migration (see Chapter 5). In a recent work \cite{Kosh}, criterions for transience of $Z^+$ are given for more general environments, such as periodic environments, where the parameter $\delta$ is replaced by a more robust parameter $\theta$ which coincides with $\delta$ when the environment is either (BD) or (POS). Thus Theorem \ref{thm:bddCaseTransience} can be generalized to such periodic environments, giving that $X^{(j)}$, $j\le k$, are all transient to the right if and only if $\theta>k$.
\end{enumerate}
\subsection{Open problems}

For simplicity of presentation we shall assume in this section that $P$ is a probability measure over the space of cookie environments and that whenever $\delta = E \left[ \sum_{i=1}^\infty(2\om(0,i)-1)\right ]$ is defined then it is in $[0,\infty]$.
Our first two open problem deal with removing the boundedness condition on the cookies from Theorems \ref{thm:bddCaseTransience} and \ref{thm:bddCaseSpeed}. As we ask for the same threshold, we still demand that $\delta$ be well-defined:
\begin{problem}
Assume that $P$ satisfies (IID) and (WEL) and that $\delta$ is well-defined. Is it true that $X^{(k)}$ is transient if and only if $k<\delta$.
\end{problem}
\begin{problem}
Assume that $P$ satisfies (IID) and (WEL) and that $\delta$ is well-defined. Is it true that $X^{(k)}$ has non-zero speed if and only if $k<\delta-1$.
\end{problem}
For Theorem \ref{thm:bddCaseTransience} one may also try to generalize conditions by relaxing the (IID) condition to (SE):
\begin{problem}
Assume that $P$ satisfies (SE), (WEL) and (BD). Is it true that $X^{(k)}$ is transient if and only if $k<\delta$.
\end{problem}
Note that the $1$ particle version of this question appeared as problem $3.11$ of \cite{kosygina2012excited}.

As mentioned after Theorem \ref{thm:bddCaseSpeed}, in the (IID), (BD), and (WEL) case, when $k + 1 < \delta \le k + 2$ the first $k-1$ walkers are transient to the right with positive speed, the next one is transient to the right with $0$ speed, and all subsequent walkers are recurrent. We now ask two questions regarding how general is this phenomenon. Note that we do not ask for the values of the thresholds, which allows us more freedom in the conditions on $\mu$:
\begin{problem}
Is it true that for any $P$ satisfying (SE) and (ELL) there exists some $R\geq 0$ so that a.s.\ the first $R$ walkers are transient with non-zero speed, the next walker is a.s.\ transient with $0$ speed, and all following walkers are a.s.\ recurrent?
\end{problem}
And the weaker form:
\begin{problem}
Is it true that for any $P$ satisfying (SE) and (ELL) if $v_k=0$ then $v_m=0$ for all $m\geq k$?
\end{problem}

We end with a problem of a different flavour. Say that we are given 2 walkers on an environment, and we are allowed to decide a which walker to move at each step. How much can we control the path of one of the walkers? For 2-transient arrow environments, the Exchangeability Lemma \ref{lem:exchangeabilty} tells us that the local time is invariant under the choice of scheduling, and thus we have very limited control, while for recurrent arrow environments one can completely control the path of one walker by letting him walk only on arrows compatible with the path and letting the other walker clear out all the ``wrong" arrows it encounters.
When given walkers on a cookie environment, one cannot hope to attain a pre-described path as the moves are random, so instead we ask only to have one of the walkers go a.s.\ to infinity.
It is clear that for some environments this is impossible, for instance if all cookies are placebo $(\frac12,\frac12)$ cookies. On the other hand, as observed by Jonathon Peterson (private communication), when there are infinite cookies per site with infinite drift in both directions, one may imitate the arrow strategy and have one walker eat only positive cookies. Our question is therefore the following:

\begin{problem}[Master and Servant]
Is there a $1$-recurrent non-degenerate bounded cookie environment $\om$ and a $2$-scheduling so that one of the two particles will a.s.\ go to infinity?
\end{problem}

\section*{Acknowledgments}
We thank Noam~Berger and Gady~Kozma for useful discussions. The research of G.A.\ was supported by the Israel Science Foundation grant ISF 1471/11.
The research of T.O\ was partly supported by the Israel Science Foundation.
\bibliography{mob}{}
\bibliographystyle{plain}

\end{document}